\documentclass[11pt]{article}
\usepackage[margin=1in]{geometry}
\usepackage{amsmath,amssymb,amsthm}
\usepackage[T1]{fontenc}
\usepackage{tikz}
\usepackage{pgfplots}
\usepackage{float}
\pgfplotsset{compat=1.18}
\newtheorem{theorem}{Theorem}[section]
\newtheorem{proposition}[theorem]{Proposition}
\newtheorem{lemma}[theorem]{Lemma}
\newtheorem{definition}[theorem]{Definition}
\newtheorem{corollary}[theorem]{Corollary}
\newtheorem{conjecture}[theorem]{Conjecture}
\theoremstyle{remark}
\newtheorem{remark}[theorem]{Remark}
\title{Spectral duality structures and the Fisher--Rao geometry of
reset distributions}
\author{Juan Antonio Vega Coso\\[2pt]
\normalsize Instituto Universitario de F\'isica Fundamental y
Matem\'aticas (IUFFyM),\\
\normalsize Universidad de Salamanca, Plaza de la Merced s/n,
E-37008 Salamanca, Spain}
\date{}
\begin{document}
\maketitle
\begin{abstract}
We study the geometry that spectral duality induces on the simplex
of reset distributions for absorbed Markov processes with geometric
resetting. The simplex carries a canonical Riemannian structure ---
the Fisher--Rao metric, selected by \v{C}encov's theorem --- and we
show that in this geometry the separatrix $\Sigma$ of reset-neutral
distributions is rigid: under the square-root embedding
$\pi\mapsto2\sqrt\pi$ its interior becomes a totally geodesic
subsphere of the positive octant, and is itself a Fisher--Rao
simplex of lower dimension. We then reduce the reset response to a finite structure.
The response functionals $\psi(\gamma)$ span a subspace $V$ whose
dimension $r$ equals the number of active orbits of the duality
involution, and the local separatrix is its annihilator; at the
vertices of the simplex we prove a sign theorem valid for every $r$,
which recovers the two-zone phenomenon of the first paper of this
series as the trace of the geometry along the vertex diagonal.
Finally, the invariant $r$ settles the global orientation principle
conjectured in Paper~III. For $r=1$ all response functionals lie on
a single line and, under a scalar sign condition satisfied by the
canonical realisation, the sign of the response is fixed once and
for all by the side of $\Sigma$; this is a sign law referred to a
fixed director, and it reproduces the pointwise identification of
Paper~III only when the two orientations agree, which the canonical
realisation guarantees but the abstract class does not. For
$r\ge2$ the span has dimension at least two and the orientation
cannot stay constant; we give the criterion under which no such
global law can hold, and record the counterexamples, due to
T.~Newton, that show it is met in the abstract class.
The biased random walk with multi-site geometric resetting realises
the whole picture explicitly. This is the fifth paper in a program
connecting stochastic resetting with spectral theory and information
geometry.
\end{abstract}

\section{Introduction}
\label{sec:intro}

First-passage problems in confined domains sit at the intersection
of probability, statistical physics, and the spectral theory of the
generators that govern them \cite{Redner2001,Bray2013,Schuss2015}.
The gambler's ruin problem --- the
probability that a biased random walk on a finite interval is
absorbed at one end before the other \cite{Feller1968} --- is the
canonical example,
and the effect of \emph{stochastic resetting} on such problems has
been studied intensively in recent years
\cite{EvMaj2011,EvMaj2020,Reuveni2016,PalReuveni2017,VillarroelMonteroVega2021}:
at random times the
process is returned to a prescribed distribution, and the absorption
statistics change in ways that are often far from obvious. Among the
phenomena uncovered by this programme, one has proved especially
persistent. For the biased walk with geometric resetting there exist
\emph{reset-neutral} initial data --- distributions from which the
ruin probability does not depend on the resetting rate at all
\cite{PaperI,PaperII}. The first two papers of this series
identified this invariance and computed the neutral distributions
explicitly; what they did not explain is \emph{why} it occurs, or
what governs the way the ruin probability responds to resetting away
from neutrality. Paper~III \cite{PaperIII} took the first step
towards an explanation: it isolated the structural conditions under
which the phenomenon arises, identified the critical scalar
$C^\ast$ and the set of reset-neutral distributions it cuts out,
and conjectured that a single linear functional orients the response
across the whole simplex. That conjecture is the point of departure
here.

The purpose of the present paper is to answer that question by
changing the object of study. Rather than track the ruin probability
from a single starting site, we consider the whole simplex
$\Delta_{m-1}$ of reset distributions and the functional
$C(\pi,\gamma)$ that a distribution $\pi$ and a resetting rate
$\gamma$ produce. The reset-neutral data of the earlier papers are
then no longer isolated points but a submanifold --- a
\emph{separatrix} $\Sigma$ --- and the response of $C$ to $\gamma$
acquires an orientation: on one side of $\Sigma$ it increases, on
the other it decreases, and on $\Sigma$ it is constant. The
two-zone picture of Paper~I \cite{PaperI} is recovered as the trace
of this geometry along the vertices of the simplex, where resetting to a
site and starting from it coincide; the neutral starting point
becomes the vertex at which the separatrix meets the boundary. The
pointwise phenomenon and the global geometry are, in this sense, the
same fact seen at two resolutions.

Read this way, the programme is seen to carry two geometries at
different levels, and of different kinds. The first is a
Hilbert-space structure on the state space: the spectral
decomposition of the absorbed generator, weighted by the reversible
measure, in which the modes and the duality of the earlier papers
reside. The second is a Riemannian structure on the parameter
space: the simplex carries the Fisher--Rao metric, the canonical
Riemannian structure on a space of probability distributions, and
it is here that the separatrix and the response geometry live. The
two are not the same object, and there is no global identification
between them; but they meet, and what joins them is a square root.
Under the embedding $\pi\mapsto2\sqrt\pi$, which realises the
Fisher--Rao geometry isometrically as the round geometry of a
spherical octant, the interior of the separatrix becomes a totally
geodesic subsphere. Both constructions are governed by the same square-root
transformation: the square root of the reversible measure that
symmetrises the generator on the state space is also, once
normalised, a reset-neutral distribution --- the distinguished one
at which every orbit carries equal weight in the symmetrised
coordinates.

The paper makes three contributions, each resting on the one before.
First, it reduces the entire reset response to a finite structure:
the spectral duality forces the mode-by-mode constraints to decouple
along the orbits of an involution, and at the distinguished
invariant value $C^\ast=1/(1+\sqrt K)$ each orbit collapses
algebraically to a single direction, so that the whole response
span has dimension equal to the number of active orbits. Second, it identifies
Fisher--Rao as the geometry intrinsic to the problem --- not imposed
for convenience but selected, by \v{C}encov's theorem, as the only
metric compatible with the statistical structure --- and shows that
the separatrix is cut out by linear constraints, is totally
geodesic in this metric, and is itself a Fisher--Rao simplex of
lower dimension. Third, it isolates
the invariant $r$, the number of active orbits, as the integer that
governs the global response: when $r=1$ the response direction
cannot rotate, and under a scalar sign condition across the reset
sites --- which the canonical realisation satisfies --- the sign of
the response is fixed once and for all by the side of the
separatrix, whereas for $r\ge2$ the response span has dimension at
least two, the orientation cannot stay constant, and the simple
two-zone picture gives way to a higher-dimensional one. The biased random walk realises all of this
concretely, and provides the figures and the closed-form checks
throughout.

The paper is organised as follows. Section~\ref{sec:inherited}
sets out the abstract framework and the four structural conditions.
Section~\ref{sec:fisher-rao} introduces the Fisher--Rao geometry and
the square-root embedding. Section~\ref{sec:finite-reduction}
establishes the finite reduction and the invariant $r$.
Section~\ref{sec:geometry} develops the geometry of the critical set
and proves the separatrix rigid and totally geodesic.
Section~\ref{sec:sign-theorem} proves the sign theorem at the
starting sites, valid for every $r$, recovers the two zones of
Paper~I as a corollary, and settles the global orientation
principle: a sign law for $r=1$, and, for $r\ge2$, a criterion for
its failure together with the counterexamples that meet it.
Section~\ref{sec:illustrations} illustrates the whole picture on the
biased random walk. Section~\ref{sec:beyond} indicates the
directions --- combinatorial, projective, and representational ---
in which the construction continues. An appendix collects the
spectral realisation and its connection to the resolvent methods of
Paper~IV \cite{PaperIV}.

\section{The structural framework}
\label{sec:inherited}

This section collects the objects and results from
Papers~I--III \cite{PaperI,PaperII,PaperIII} on which the present
work is built. Everything in this section is pre-metric in the
intrinsic sense: no inner product, no notion of distance or angle,
is imposed on the simplex. \textup{(}An auxiliary inner product on
a representation space appears in Remark~\ref{rem:bridge}; it is a
device for writing the coupling, not a geometry on
$\Delta_{m-1}$.\textup{)} The framework
consists of a functional on a simplex, four structural conditions,
an invariant scalar, and a distinguished family of distributions.
The geometric structure that is the subject of this paper enters
only in Section~\ref{sec:fisher-rao}.

\subsection{Reset sites and the coupling functional}
\label{subsec:coupling}

Let $m\ge 2$ and let $\{z_1,\dots,z_m\}$ be a finite set of
distinguished \emph{reset sites}. Probability distributions supported
on these sites form the simplex
\[
\Delta_{m-1}=\Bigl\{\pi\in\mathbb R^m:\ \pi_i\ge 0,\
\textstyle\sum_{i=1}^m\pi_i=1\Bigr\},
\]
with relative interior $\Delta_{m-1}^\circ$ (all $\pi_i>0$). The
parameter $\gamma\in(0,1)$ is the resetting rate. The object of
study is a smooth \emph{coupling functional}
\[
C:\Delta_{m-1}\times(0,1)\longrightarrow\mathbb R,
\]
which in concrete models is a coupling constant associated with
absorption probabilities under resetting; in the canonical
realisation (the biased random walk of Papers~I--II, revisited in
Appendix~\ref{app:spectral}) it takes the form
$C(\pi,\gamma)=\bar u_\pi(\gamma)/\bar s_\pi(\gamma)$.

Following Paper~III \cite{PaperIII}, we do not specify the underlying Markov
process; instead we assume four structural conditions on $C$ and
its spectral data --- conditions which, in this paper, are promoted
from working hypotheses to the defining axioms of an abstract
structure (Definition~\ref{def:sds} below).

\begin{itemize}
\item[\textbf{(S1)}] \emph{Regularity.} For each fixed
$\gamma\in(0,1)$, the map $\pi\mapsto C(\pi,\gamma)$ is $\mathcal
C^1$ on $\Delta_{m-1}^\circ$ and extends continuously to the
boundary.
\item[\textbf{(S2)}] \emph{Spectral representability.} There exist
functions $u(z;\gamma)$, $s(z;\gamma)$ with
\[
C(\pi,\gamma)=\frac{\sum_i\pi_i\,u(z_i;\gamma)}
{\sum_i\pi_i\,s(z_i;\gamma)},
\]
the denominator strictly positive on $\Delta_{m-1}\times(0,1)$,
admitting finite spectral decompositions
\[
u(z;\gamma)=\sum_{\nu=1}^N f_\nu(\gamma)A_\nu(z),\qquad
s(z;\gamma)=\sum_{\nu=1}^N f_\nu(\gamma)\bigl(A_\nu(z)+B_\nu(z)\bigr),
\]
where the $\{f_\nu\}_{\nu=1}^N$ are real-analytic and linearly
independent on $(0,1)$ and the $m$ vectors
$\{(A_\nu(z_i))_{\nu=1}^N\}_{i=1}^m$ are linearly independent in
$\mathbb R^N$; equivalently, the matrix
$(A_\nu(z_i))\in\mathbb R^{N\times m}$ has full column rank $m\le N$.
Thus all dependence on the reset parameter is carried by the scalar
functions $f_\nu(\gamma)$, while the spatial information is encoded
in the fixed coefficients $A_\nu,B_\nu$; this separation of
variables is used constantly in what follows. The full column rank
guarantees that no reset site is spectrally redundant. This
non-degeneracy is one of the ingredients --- together with the
duality \textup{(S3)}--\textup{(S4)} --- from which the dimension of
the response span is computed in
Section~\ref{sec:finite-reduction}. The analyticity of the $f_\nu$
costs nothing in practice: in any resolvent realisation they are
rational functions of $\gamma$ with non-vanishing denominator, as
for the biased walk, where
$f_\nu(\gamma)=(1-\gamma)/(1-\lambda_\nu(1-\gamma))$ with
$|\lambda_\nu|<1$. It is used only to rule out degeneracies
concentrated on a set of resetting rates without interior
(Lemma~\ref{lem:rigidity}).
\item[\textbf{(S3)}] \emph{Spectral duality.} There exist an
involution $\sigma$ on the reset sites ($\sigma^2=\mathrm{id}$,
with at least one non-trivial pair) and positive weights
$\kappa(z)>0$, \emph{independent of the mode index $\nu$}, such
that
\[
B_\nu(z)=\kappa(z)\,A_\nu(\sigma(z))\qquad\text{for all }\nu,z.
\]
\item[\textbf{(S4)}] \emph{Compatibility.} There exists $K>0$,
the same for all sites, with
\[
\kappa(z)\,\kappa(\sigma(z))=K\qquad\text{for all }z.
\]
\end{itemize}

\begin{definition}[Spectral duality structure]
\label{def:sds}
A \emph{spectral duality structure} on the sites
$\{z_1,\dots,z_m\}$ is a pair $(\Delta_{m-1},C)$ --- the simplex
of distributions together with a coupling functional --- admitting
a \emph{spectral representation}: data
$\bigl(\sigma,\kappa,\{f_\nu\},\{A_\nu\}\bigr)$ satisfying
\textup{(S1)}--\textup{(S4)}, the dual coefficients being not
independent data but determined by \textup{(S3)} through
$B_\nu(z)=\kappa(z)A_\nu(\sigma(z))$.
\end{definition}

The definition places the pair $(\Delta_{m-1},C)$ first and the
spectral data second, and the proposition below shows that this is
the correct hierarchy: the duality data $(\sigma,\kappa)$ are not
an additional choice but invariants of the pair. The following lemma isolates
the one degeneracy that has to be excluded first --- the resetting
rates at which the coupling is blind to $\pi$ altogether.

\begin{lemma}[Non-degenerate rates]
\label{lem:rigidity}
Let $(\Delta_{m-1},C)$ admit a spectral representation and set
\[
G:=\{\gamma\in(0,1):\ C(\cdot,\gamma)\ \text{is not constant on}\
\Delta_{m-1}\}.
\]
Then $G$ is open and dense in $(0,1)$, its complement being
discrete; and for $\gamma\in G$ any two spectral representations of
$C$ satisfy
\[
u'(\cdot;\gamma)=\lambda(\gamma)\,u(\cdot;\gamma),\qquad
s'(\cdot;\gamma)=\lambda(\gamma)\,s(\cdot;\gamma),
\qquad\lambda(\gamma)>0 .
\]
\end{lemma}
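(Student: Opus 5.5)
The plan is to reduce both assertions to elementary algebra on the vectors $u(\gamma):=(u(z_i;\gamma))_{i=1}^m$ and $s(\gamma):=(s(z_i;\gamma))_{i=1}^m$. Write $a_i:=(A_\nu(z_i))_{\nu=1}^N\in\mathbb R^N$, $f(\gamma):=(f_\nu(\gamma))_\nu$, $\kappa_i:=\kappa(z_i)$, and let $\sigma$ also denote the induced involution on indices. By (S2)--(S3),
\[
u_i=\langle f,a_i\rangle,\qquad s_i=\langle f,a_i+\kappa_i a_{\sigma i}\rangle .
\]

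First I characterise constancy. Since $C(\pi,\gamma)=\langle\pi,u\rangle/\langle\pi,s\rangle$ with $\langle\pi,s\rangle>0$, its value at the vertex $e_i$ is $u_i/s_i$. If $C(\cdot,\gamma)\equiv c$, then $\langle\pi,u-cs\rangle=0$ on the simplex, and hence $u=cs$. Conversely, $u=cs$ makes $C$ constant. So $\gamma\notin G$ if and only if all the $2\times2$ minors $D_{ij}(\gamma):=u_is_j-u_js_i$ vanish. Each $D_{ij}$ is continuous, so the complement of $G$ is closed and $G$ is open. Each $D_{ij}$ is also real-analytic, since the $f_\nu$ are. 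It therefore suffices to exhibit one minor that is not identically zero: its zero set is discrete in $(0,1)$, and it contains $G^c$. This gives discreteness of the complement and density of $G$.

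The key step, and the one I expect to carry the weight, uses the duality. Pick $i$ with $j=\sigma i\neq i$, which exists by (S3). Substituting the formulas above, the terms $\langle f,a_i\rangle\langle f,a_j\rangle$ cancel and
\[
D_{ij}=\kappa_j\langle f,a_i\rangle^2-\kappa_i\langle f,a_j\rangle^2=(x-cy)(x+cy),
\]
where $x=\sqrt{\kappa_j}\langle f,a_i\rangle$, $y=\sqrt{\kappa_i}\langle f,a_j\rangle$ and $c=1$ after the rescaling. Suppose $D_{ij}\equiv0$ on $(0,1)$. The ring of real-analytic functions on an interval has no zero divisors, so one factor vanishes identically. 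This gives
\[
\langle f(\gamma),\sqrt{\kappa_j}\,a_i\mp\sqrt{\kappa_i}\,a_j\rangle\equiv0 .
\]
Linear independence of the $f_\nu$ then forces $\sqrt{\kappa_j}\,a_i=\pm\sqrt{\kappa_i}\,a_j$. This contradicts the full column rank in (S2), since $i\ne j$. Hence $D_{ij}\not\equiv0$, as required.

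For uniqueness, fix $\gamma\in G$ and a second representation $(u',s')$. Equality of the two ratios on $\Delta_{m-1}$ gives
\[
\langle\pi,u\rangle\langle\pi,s'\rangle=\langle\pi,u'\rangle\langle\pi,s\rangle
\]
on the simplex. Both sides are homogeneous quadratics, so the identity extends to the open cone spanned by $\Delta^\circ_{m-1}$, and hence to a polynomial identity on $\mathbb R^m$. The linear form $\langle x,s\rangle$ is nonzero, because $s>0$, and it is irreducible. By unique factorisation in $\mathbb R[x]$ it divides either $\langle x,u\rangle$ or $\langle x,s'\rangle$. The first would mean $u\in\mathbb R s$, which is constancy of $C$ and is excluded by $\gamma\in G$. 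Note that $u=0$ is also excluded, so both sides are genuinely nonzero. Hence $s'=\lambda s$. Cancelling $\langle x,s\rangle$ gives $u'=\lambda u$. Finally $\lambda>0$, because $s$ and $s'$ are both strictly positive at any vertex.
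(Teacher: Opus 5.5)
Your proof is correct. For openness and for the uniqueness clause it is essentially the paper's argument: vanishing of the vertex minors, then unique factorisation of the quadratic identity in $\mathbb R[x]$. Your divisibility phrasing treats $u=0$ explicitly and gets $\alpha=\beta$ by cancelling, rather than by comparing $C$ with $C'$.

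The genuine difference is in how you show $(0,1)\setminus G$ is discrete.

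The paper works at a degenerate rate. From $u=c\,s$ together with \textup{(S3)} and \textup{(S4)} it derives $(1-c)^2=c^2K$. So the constant can only be $1/(1+\sqrt K)$ or, when $K\neq1$, $1/(1-\sqrt K)$. For each such fixed $c$, the analytic map $\gamma\mapsto u(z;\gamma)-c\,s(z;\gamma)$ cannot vanish identically, because the functions $u(z;\cdot)$ are linearly independent.

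You instead exhibit one explicit analytic function, $D_{i\,\sigma i}=\kappa_{\sigma i}u_i^2-\kappa_i u_{\sigma i}^2$, whose zero set contains every degenerate rate. You rule out $D_{i\,\sigma i}\equiv0$ by the difference-of-squares factorisation and the absence of zero divisors among real-analytic functions on an interval.

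Your route is shorter. It also shows that this half of the lemma needs only \textup{(S2)}, \textup{(S3)} and one non-trivial pair; the compatibility \textup{(S4)} is never used.

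The paper's route spends \textup{(S4)} but gains information yours does not give. Wherever $C(\cdot,\gamma)$ degenerates, its constant value is forced to be $C^\ast$ or the second root. The two arguments are close relatives: at a degenerate rate, and under \textup{(S4)}, the two factors of your minor correspond exactly to the two roots $c=1/(1\pm\sqrt K)$ of the paper's quadratic.
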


\begin{proof}
We first record, since it is used twice here and again below, that
the functions $\{U(z,\cdot)\}_z$, $U(z,\cdot):=u(z;\cdot)$, are
linearly independent: a vanishing combination
$\sum_z c_zU(z,\cdot)\equiv0$ expands as
$\sum_\nu f_\nu(\gamma)\sum_z c_zA_\nu(z)=0$, and the independence
of the $f_\nu$ followed by the full column rank of
$(A_\nu(z_i))$ in \textup{(S2)} forces $c\equiv0$. In particular no
two distinct $U(z,\cdot)$ are proportional.

Taking $\pi=\delta_z$ in the positivity of \textup{(S2)} gives
$s(z;\gamma)>0$ at every site, so $s(\cdot;\gamma)$ has strictly
positive entries. Consequently $u(\cdot;\gamma)\propto
s(\cdot;\gamma)$ if and only if $C(\cdot,\gamma)$ is constant: if
$u=cs$ then $C\equiv c$, and conversely $C\equiv c$ gives
$\langle\pi,u-cs\rangle=0$ for every $\pi\in\Delta_{m-1}$, hence
$u=cs$. Thus $G$ is exactly the set of $\gamma$ at which
$u(\cdot;\gamma)$ and $s(\cdot;\gamma)$ are non-proportional.
Moreover $C(\cdot,\gamma)$ is constant on $\Delta_{m-1}$ if and only
if its values at the vertices coincide: if
$u(z;\gamma)/s(z;\gamma)=c$ for every $z$ then $u=cs$ and
$C\equiv c$, and the converse is immediate. Hence
\[
G=\bigl\{\gamma:\ \exists\,z,z'\ \text{with}\
u(z;\gamma)s(z';\gamma)\neq u(z';\gamma)s(z;\gamma)\bigr\},
\]
a union of finitely many sets on which a continuous function of
$\gamma$ is non-zero, and therefore open.

Fix $\gamma\in G$. The identity
$\langle\pi,u\rangle\langle\pi,s'\rangle
=\langle\pi,u'\rangle\langle\pi,s\rangle$ on the open simplex
extends to $\mathbb R^m$ by homogeneity, and unique factorization
of products of linear forms leaves two pairings; the one giving
$u\propto s$ is excluded because $\gamma\in G$. Hence
$u'=\alpha u$ and $s'=\beta s$, and $C=C'$ forces
$\alpha=\beta=:\lambda(\gamma)$, positive since $s,s'>0$.

Finally, the complement of $G$ is discrete. For
$\gamma\notin G$ one has $u(\cdot;\gamma)=c(\gamma)s(\cdot;\gamma)$,
whence $B=s-u=(1-c)s$, while \textup{(S3)} gives
$B(z,\cdot)=\kappa(z)u(\sigma(z),\cdot)
=c\,\kappa(z)s(\sigma(z),\cdot)$. Comparing these at $z$ and at
$\sigma(z)$,
\[
(1-c)\,s(z,\gamma)=c\,\kappa(z)\,s(\sigma(z),\gamma),\qquad
(1-c)\,s(\sigma(z),\gamma)=c\,\kappa(\sigma(z))\,s(z,\gamma),
\]
and multiplying, the strictly positive factor
$s(z,\gamma)s(\sigma(z),\gamma)$ cancels and
$\kappa(z)\kappa(\sigma(z))=K$ leaves the quadratic identity
\[
(1-c)^2=c^2K ,
\]
obtained without dividing by $c$. \textup{(}In fact $c\neq0$: were
$u(\cdot;\gamma)=0$ then $B=0$ by \textup{(S3)} and hence $s=0$,
contradicting \textup{(S2)}.\textup{)} Its solutions are
$c(1\pm\sqrt K)=1$: the branch with $+$ always gives
$c=1/(1+\sqrt K)$, while the branch with $-$ gives
$c=1/(1-\sqrt K)$ when $K\neq1$ and is impossible when $K=1$, where
it reads $0=1$. So $c(\gamma)$ takes at most two values.
\textup{(}The second is retained where it exists: \textup{(S2)}
constrains only the denominator, so $C$ need not be positive in the
abstract setting and that branch cannot be discarded on sign
grounds.\textup{)} For each such fixed value $c$
the map
$\gamma\mapsto u(z;\gamma)-c\,s(z;\gamma)$ is real-analytic, so
its zero set is either discrete or all of $(0,1)$. In the latter
case $u\equiv c\,s$, whence
$U(\sigma(z),\cdot)=\bigl((1-c)/(c\,\kappa(z))\bigr)
U(z,\cdot)$ as functions, making $U(\sigma(z),\cdot)$ and $U(z,\cdot)$ proportional on any
non-trivial pair, which the independence recorded above excludes.
Each
of the two zero sets is therefore discrete, and so is their
union $(0,1)\setminus G$.
\end{proof}

\begin{proposition}[The duality data are invariants]
\label{prop:invariants}
The involution $\sigma$ and the weights $\kappa$ are uniquely
determined by the coupling functional: any two spectral
representations of the same pair $(\Delta_{m-1},C)$ share the same
$(\sigma,\kappa)$. Consequently every
object built from $(\sigma,\kappa)$ --- orbits, the constant $K$,
the invariant value $C^\ast$, the separatrix, the response span
--- is an invariant of the pair.
\end{proposition}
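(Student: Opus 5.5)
We will compare two spectral representations through Lemma~\ref{lem:rigidity}. Let $(\sigma,\kappa,\{f_\nu\},\{A_\nu\})$ and $(\sigma',\kappa',\{f'_\nu\},\{A'_\nu\})$ be two spectral representations of the same pair $(\Delta_{m-1},C)$, with site functions $u,s$ and $u',s'$. Fix $\gamma$ in the open dense set $G$. By the lemma, $u'(\cdot;\gamma)=\lambda(\gamma)u(\cdot;\gamma)$ and $s'(\cdot;\gamma)=\lambda(\gamma)s(\cdot;\gamma)$ with $\lambda(\gamma)>0$. Hence the dual parts $b(z;\gamma):=s(z;\gamma)-u(z;\gamma)=\sum_\nu f_\nu(\gamma)B_\nu(z)$ also satisfy $b'=\lambda b$ on $G$. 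In terms of the site functions, (S3) reads $b(z;\gamma)=\kappa(z)\,u(\sigma(z);\gamma)$ for all $\gamma$, and similarly for the primed data.

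Combining these for every $\gamma\in G$ gives
\[
\lambda(\gamma)\kappa(z)\,u(\sigma(z);\gamma)=\lambda(\gamma)b(z;\gamma)=b'(z;\gamma)=\kappa'(z)\,u'(\sigma'(z);\gamma)=\lambda(\gamma)\kappa'(z)\,u(\sigma'(z);\gamma).
\]
Cancelling $\lambda(\gamma)>0$ yields $\kappa(z)U(\sigma(z),\cdot)=\kappa'(z)U(\sigma'(z),\cdot)$ on $G$. Both sides are real-analytic in $\gamma$, since the $f_\nu$ are, and $G$ is dense, so the identity holds on all of $(0,1)$.

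The first paragraph of the proof of Lemma~\ref{lem:rigidity} shows that the functions $\{U(z,\cdot)\}_z$ are linearly independent, so no two distinct ones are proportional, and none vanishes identically. Since $\kappa(z),\kappa'(z)>0$, the identity $\kappa(z)U(\sigma(z),\cdot)=\kappa'(z)U(\sigma'(z),\cdot)$ with $\sigma(z)\neq\sigma'(z)$ would be a nontrivial vanishing combination. Hence $\sigma(z)=\sigma'(z)$ for every $z$. Then $(\kappa(z)-\kappa'(z))U(\sigma(z),\cdot)\equiv0$, and since $U(\sigma(z),\cdot)\not\equiv0$, this gives $\kappa=\kappa'$. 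The constant $K=\kappa(z)\kappa(\sigma(z))$ and the orbits of $\sigma$ are then invariant as well. Every later object ($C^\ast$, the separatrix, the response span) is defined from $(\sigma,\kappa)$ together with $C$ itself, so it is an invariant of the pair once its construction is given.

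The main obstacle is a subtlety: $u$ is attached to the representation only up to the scalar $\lambda(\gamma)$, and the dual part is read off as $s-u$. The argument therefore relies on $u$ and $s$ rescaling by the \emph{same} factor. This is exactly what the lemma supplies on $G$, so the comparison works only at non-degenerate rates. The passage from $G$ to all rates via analyticity and density is the step to check carefully. Note that the primed representation may use different $f'_\nu$, but its $U'(z,\cdot)$ is still analytic, and only $U=\lambda^{-1}U'$ on $G$ is needed.
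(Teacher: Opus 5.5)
Your proof is correct and follows the paper's argument essentially step for step. You use the rigidity of Lemma~\ref{lem:rigidity} on the dense set $G$, subtract to get $b'=\lambda b$, apply (S3) to both representations and cancel $\lambda(\gamma)>0$, extend the identity $\kappa(z)U(\sigma(z),\cdot)=\kappa'(z)U(\sigma'(z),\cdot)$ from $G$ to $(0,1)$, and conclude $\sigma'=\sigma$ and $\kappa'=\kappa$ from the linear independence of the $U(z,\cdot)$. One small correction concerns the ``consequently'' clause: the response span is not literally defined from $(\sigma,\kappa)$ and $C$ alone, since $\psi=u-C^\ast s$ uses the representation's $u,s$. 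Its invariance follows either from $\psi'=\lambda\psi$ with $\lambda>0$ on the dense set $G$ together with continuity, or from $V=\bigoplus_O\mathbb R\,\mathbf d_O$ in Theorem~\ref{thm:fsr}.
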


\begin{proof}
Write $B(z,\cdot)=s(z;\cdot)-u(z;\cdot)$, and recall from the proof
of Lemma~\ref{lem:rigidity} that the family $\{U(z,\cdot)\}_z$ is
linearly independent.

By Lemma~\ref{lem:rigidity} there is a dense open
$G\subseteq(0,1)$ such that, for each $\gamma\in G$,
$u'(\cdot;\gamma)=\lambda(\gamma)u(\cdot;\gamma)$ and
$s'(\cdot;\gamma)=\lambda(\gamma)s(\cdot;\gamma)$ with
$\lambda(\gamma)>0$. The proportionality holds at each such
$\gamma$ separately, but that is all we need: subtracting the two
identities gives $B'(z,\gamma)=\lambda(\gamma)B(z,\gamma)$ and
$U'(z,\gamma)=\lambda(\gamma)U(z,\gamma)$ for every site and every
$\gamma\in G$, so the ratio $B'(z,\gamma)/U'(w,\gamma)
=B(z,\gamma)/U(w,\gamma)$ is free of $\lambda$ throughout $G$.
Now \textup{(S3)}, applied to each representation, reads
\[
B(z,\cdot)=\kappa(z)\,U(\sigma(z),\cdot),\qquad
B'(z,\cdot)=\kappa'(z)\,U'(\sigma'(z),\cdot)
\]
as identities of functions on $(0,1)$. Dividing the second by
$\lambda$ on $G$ and comparing with the first,
\[
\kappa(z)\,U(\sigma(z),\cdot)=\kappa'(z)\,U(\sigma'(z),\cdot)
\qquad\text{on } G,
\]
and both sides are real-analytic, so the identity holds on all of
$(0,1)$. Since $\kappa(z),\kappa'(z)>0$, this makes
$U(\sigma(z),\cdot)$ and $U(\sigma'(z),\cdot)$ proportional; but no
two distinct members of a linearly independent family are
proportional, so $\sigma'(z)=\sigma(z)$ for every $z$, and then
$\kappa'(z)=\kappa(z)$. The involution and the
weights are therefore the same for both representations.
\end{proof}

Throughout the paper we work with a fixed spectral duality
structure, described through some spectral representation; by
Proposition~\ref{prop:invariants}, nothing depends on this choice.
Except where a realisation is invoked explicitly --- as in the
vertex condition \textup{(V)} of Section~\ref{sec:sign-theorem} and
in Appendix~\ref{app:spectral} --- the arguments below use only the
axioms, and no property of an underlying stochastic process.
The canonical realisation --- the biased random walk with geometric
resetting of Papers~I--II --- is described in
Appendix~\ref{app:spectral}, and every statement below applies to
it as a particular instance.

\begin{remark}[Terminology]
\label{rem:terminology}
We retain throughout the vocabulary of the problem in which the
structure was discovered --- \emph{reset sites}, \emph{reset
parameter}, \emph{reset landscape} --- as names for the abstract
objects, in the same way that Riemannian geometry retains
\emph{geodesic} from geodesy: no probabilistic content is implied
by these words, and none is used. When a statement does require an
underlying process, the hypothesis is stated explicitly (as in
Remark~\ref{rem:pi-mu} below).
\end{remark}

An \emph{orbit} of $\sigma$ is a pair $\{z,\sigma(z)\}$ with
$\sigma(z)\neq z$; a \emph{neutral site} is a fixed point
$z_0=\sigma(z_0)$, for which (S4) forces $\kappa(z_0)=\sqrt K$. We
write $n_{\mathrm{pair}}$ for the number of orbits and $n_0$ for
the number of neutral sites, so that $m=2n_{\mathrm{pair}}+n_0$.
Throughout the paper, the orbits of $\sigma$ are the fundamental
structural units, and every object introduced below is organised
orbitwise: the compatibility \textup{(S4)} is one constraint per
orbit; \textup{(S4)} assigns the same product $K$ to every orbit,
and combined with the neutrality condition this makes the invariant
scalar $C^\ast=1/(1+\sqrt K)$ the same whichever orbit it is computed
from;
the ratios characterising the separatrix are fixed orbit by orbit;
the response dimension $r$ counts the orbits
(Section~\ref{sec:finite-reduction}); and the Fisher--Rao geometry
of Sections~\ref{sec:fisher-rao}--\ref{sec:geometry} will
decompose orbitwise as well. Although introduced historically in a
different order, these objects organise themselves naturally
around the orbit structure induced by the involution.

\subsection{The invariant scalar and the separatrix}
\label{subsec:invariant}

A distribution $\pi^\ast\in\Delta_{m-1}$ is \emph{reset-neutral} if
$C(\pi^\ast,\gamma)$ is independent of $\gamma$. The set of
reset-neutral distributions,
\[
\Sigma:=\{\pi\in\Delta_{m-1}:\ C(\pi,\gamma)\ \text{is constant in}\
\gamma\},
\]
is the \emph{separatrix}. The central existence result of
Paper~III is:

\begin{theorem}[{Paper~III, Theorem~3.3}]
\label{thm:paper3}
Under \textup{(S1)--(S4)} the separatrix is non-empty, and every
$\pi^\ast\in\Sigma$ satisfies $C(\pi^\ast,\gamma)=C^\ast$ for all $\gamma$,
with
\[
C^\ast=\frac{1}{1+\sqrt K}.
\]
Moreover:
\textup{(i)} on every orbit carrying positive mass,
$\pi^\ast_z/\pi^\ast_{\sigma(z)}=\sqrt{\kappa(\sigma(z))/\kappa(z)}$ ---
in particular throughout $\Sigma^\circ$;
\textup{(ii)} the weight of each neutral site is free, subject
only to normalization; \textup{(iii)} $C^\ast$ depends only on $K$.
\end{theorem}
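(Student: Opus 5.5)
Write $a_\nu(\pi):=\sum_i\pi_iA_\nu(z_i)$ and $b_\nu(\pi):=\sum_i\pi_iB_\nu(z_i)$. Then by \textup{(S2)}
\[
C(\pi,\gamma)=\frac{\sum_\nu f_\nu(\gamma)a_\nu(\pi)}{\sum_\nu f_\nu(\gamma)\bigl(a_\nu(\pi)+b_\nu(\pi)\bigr)} .
\]
The plan is to turn neutrality into linear constraints on $\pi$, solve them orbit by orbit using \textup{(S3)}--\textup{(S4)}, and read off both the value $C^\ast$ and the ratios.

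First I show that $\pi\in\Sigma$ with constant value $c$ is equivalent to the mode-by-mode identity
\[
(1-c)\,a_\nu(\pi)=c\,b_\nu(\pi)\quad\text{for all }\nu .
\]
Indeed, $C(\pi,\cdot)\equiv c$ means $\sum_\nu f_\nu(\gamma)\bigl[(1-c)a_\nu-c\,b_\nu\bigr]=0$ for all $\gamma$, and the linear independence of the $f_\nu$ gives the identity; the converse is immediate.

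Next I substitute \textup{(S3)}. Reindexing the sum by $z\mapsto\sigma(z)$ gives
\[
b_\nu(\pi)=\sum_z\pi_z\kappa(z)A_\nu(\sigma(z))=\sum_w\pi_{\sigma(w)}\kappa(\sigma(w))A_\nu(w).
\]
The identity then reads $\sum_w\bigl[(1-c)\pi_w-c\,\kappa(\sigma(w))\pi_{\sigma(w)}\bigr]A_\nu(w)=0$ for all $\nu$. By the full column rank of $(A_\nu(z_i))$, every bracket vanishes:
\[
(1-c)\,\pi_w=c\,\kappa(\sigma(w))\,\pi_{\sigma(w)}\quad\text{for every site }w .
\]
This is the key reduction: $\Sigma$ becomes a system of site-wise linear equations that couple only the two sites of an orbit.

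Now I solve these equations.
\begin{itemize}
\item \emph{Neutral site $z_0$.} Here $\kappa(z_0)=\sqrt K$, so the equation is $\pi_{z_0}\bigl(1-c(1+\sqrt K)\bigr)=0$.
\item \emph{Orbit $\{z,\sigma(z)\}$ with positive mass.} Both equations at $z$ and at $\sigma(z)$ force both masses to be positive, since each mass is a positive multiple of the other (with $c\neq0$; $c=0$ would force $\pi\equiv0$). Multiplying the two equations and using \textup{(S4)}, exactly as in the proof of Lemma~\ref{lem:rigidity}, gives $(1-c)^2=c^2K$, so $c(1\pm\sqrt K)=1$.
\end{itemize}

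The main obstacle is discarding the branch $c=1/(1-\sqrt K)$, which Lemma~\ref{lem:rigidity} explicitly warned cannot be excluded on sign grounds. I would exclude it by positivity of $\pi$ instead. On that branch $1-c=-c\sqrt K$, so the equation gives $\pi_z=-\kappa(\sigma(z))\pi_{\sigma(z)}/\sqrt K$, which is impossible for nonnegative masses not both zero. On a neutral site the same branch forces $\pi_{z_0}=0$. Hence any $\pi\in\Sigma$ has $c=C^\ast=1/(1+\sqrt K)$, and $(1-C^\ast)/C^\ast=\sqrt K$, which proves (iii).

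The orbit equation then becomes $\sqrt K\pi_z=\kappa(\sigma(z))\pi_{\sigma(z)}$. Using $\kappa(\sigma(z))=K/\kappa(z)$, this gives
\[
\pi_z/\pi_{\sigma(z)}=\sqrt{\kappa(\sigma(z))/\kappa(z)},
\]
which is (i). At $c=C^\ast$ the neutral-site equation is vacuous, which gives (ii).

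For non-emptiness, I take the orbit ratios of (i) on one non-trivial pair, which exists by \textup{(S3)}, and normalize. The site-wise equations hold, so the reduction above yields $C\equiv C^\ast$. Positivity of the denominator is guaranteed by \textup{(S2)}, so nothing else needs checking.
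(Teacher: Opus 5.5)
Your proof is correct. The paper does not reprove this theorem (it is imported from Paper~III), but your argument is exactly the mechanism the paper runs in the necessity half of Proposition~\ref{prop:sharp}: expand $\langle\pi,u-c\,s\rangle=0$ by \textup{(S2)--(S3)}, reindex by $\sigma$, use full column rank to obtain the site-wise identities $(1-c)\pi_w=c\,\kappa(\sigma(w))\pi_{\sigma(w)}$, and multiply across an orbit using \textup{(S4)}. Two things you make explicit are left implicit there: you work on the closed simplex, which covers the boundary points of $\Sigma$ and gives non-emptiness directly, and you discard the branch $c=1/(1-\sqrt K)$ by nonnegativity of the masses, which is what justifies the positive root in the step $K=((1-C)/C)^2\Rightarrow C=1/(1+\sqrt K)$.
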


Two features of Theorem~\ref{thm:paper3} organise everything that
follows. First, the invariant value $C^\ast$ is a function of the
compatibility constant $K$ alone: it is a structural scalar, not a
model-dependent quantity. Second, the characterisation \textup{(i)}
fixes only the \emph{ratio} of $\pi^\ast$ within each orbit; the
relative weight assigned to different orbits (and to neutral sites)
is unconstrained. The interior part of the separatrix is therefore a family of
positive dimension, not a single point:
\[
\dim\bigl(\Sigma^\circ\bigr)
=n_{\mathrm{pair}}+n_0-1
\]
\textup{(}Paper~III, Proposition~5.1\textup{)}. This is the
principal stratum; $\Sigma$ also meets the boundary, in strata of
lower dimension where some orbits carry no mass, and the three
statements $\Sigma\neq\emptyset$,
$\Sigma^\circ\neq\emptyset$ and
$\dim(\Sigma^\circ)=n_{\mathrm{pair}}+n_0-1$ are
kept distinct throughout. Both features admit
a natural geometric interpretation, developed in
Sections~\ref{sec:fisher-rao} and~\ref{sec:geometry}: the scalar
$C^\ast$ is the distinguished level whose level set is carried to a
totally geodesic submanifold of a round sphere, while the orbitwise
freedom of $\pi^\ast$ is precisely the intrinsic dimension of that
submanifold.

Paper~III established the sufficiency of \textup{(S4)}. The
proposition below shows that, within the present abstract
framework, compatibility is also \emph{necessary}: it is exactly
the global synchronisation condition for an interior reset-neutral
distribution. The compatibility axiom deserves this closer look: it is not an
assumption of convenience but the exact synchronisation condition
for the phenomenon.

\begin{proposition}[Sharpness of the compatibility axiom]
\label{prop:sharp}
Assume \textup{(S1)--(S3)}. Then the interior separatrix is
non-empty, $\Sigma^\circ\neq\emptyset$, if and
only if \textup{(S4)} holds. Moreover the constant and the
invariant value are forced: if $\pi\in\Sigma^\circ$
has constant value $C$, then
$\kappa(z)\kappa(\sigma(z))=\bigl((1-C)/C\bigr)^2$ for every $z$,
i.e.\ $K=\bigl((1-C)/C\bigr)^2$ and $C=1/(1+\sqrt K)$.
\end{proposition}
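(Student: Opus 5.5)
The plan is to reduce neutrality of an interior $\pi$ to a family of scalar identities, one per mode, using the separation of variables in \textup{(S2)}, and then to read off the orbit constraints. Suppose $\pi\in\Sigma^\circ$ with constant value $C$. Then $\sum_i\pi_i u(z_i;\gamma)=C\sum_i\pi_i s(z_i;\gamma)$ for all $\gamma$. Inserting the spectral decompositions gives
\[
\sum_\nu f_\nu(\gamma)\sum_i\pi_i\bigl((1-C)A_\nu(z_i)-C B_\nu(z_i)\bigr)=0 .
\]
The $f_\nu$ are linearly independent, so each coefficient vanishes. Substituting \textup{(S3)}, $B_\nu(z)=\kappa(z)A_\nu(\sigma(z))$, and reindexing the $B$-sum by $z\mapsto\sigma(z)$ (a bijection) turns this into
\[
\sum_z A_\nu(z)\bigl((1-C)\pi_z-C\,\kappa(\sigma(z))\,\pi_{\sigma(z)}\bigr)=0\qquad\text{for all }\nu .
\]
The matrix $(A_\nu(z_i))$ has full column rank, so the bracket vanishes at every site:
\[
(1-C)\pi_z=C\,\kappa(\sigma(z))\,\pi_{\sigma(z)} .
\]

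Next I would write this identity at $z$ and at $\sigma(z)$ and multiply the two. Every $\pi_z>0$ in the interior, so the positive factor $\pi_z\pi_{\sigma(z)}$ cancels, leaving
\[
(1-C)^2=C^2\,\kappa(z)\kappa(\sigma(z)).
\]
First I need $C\neq0$. If $C=0$, the site identity gives $\pi_z=0$, which is impossible in the interior. Hence $\kappa(z)\kappa(\sigma(z))=\bigl((1-C)/C\bigr)^2$, and the right-hand side is the same for every $z$. This is exactly \textup{(S4)} with $K=((1-C)/C)^2$; the argument covers fixed points too. This proves necessity together with the forced value of $K$.

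To pin down $C$ itself, I would use the site identity itself. All of $\pi_z$, $\pi_{\sigma(z)}$ and $\kappa$ are positive, so $1-C$ and $C$ have the same sign, which forces $0<C<1$. Then $(1-C)/C=+\sqrt K$, giving $C=1/(1+\sqrt K)$. The branch $1/(1-\sqrt K)$ that appeared in Lemma~\ref{lem:rigidity} is excluded precisely here, by interiority.

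For sufficiency, assume \textup{(S4)}. I would define $\pi$ orbitwise by the ratio in Theorem~\ref{thm:paper3}(i): put positive weights on each orbit in the ratio $\sqrt{\kappa(\sigma(z))/\kappa(z)}$, give each neutral site an arbitrary positive weight, and normalise. I would then check that the site identity holds with $C=1/(1+\sqrt K)$. That amounts to $\sqrt K\,\pi_z=\kappa(\sigma(z))\pi_{\sigma(z)}$, which follows from the chosen ratio together with $\kappa(z)\kappa(\sigma(z))=K$. Reversing the linear algebra above then gives a $\gamma$-independent $C$, so $\pi\in\Sigma^\circ$.

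The main obstacle is not computational. It is the careful use of sign information to exclude $C=0$ and the negative square root; everything else is the linear independence already used in Lemma~\ref{lem:rigidity}.
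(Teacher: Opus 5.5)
Your proof is correct and follows the paper's necessity argument step for step: expand via (S2), use the independence of the $f_\nu$, reindex the dual term by $\sigma$, apply full column rank, and multiply the site identities at $z$ and $\sigma(z)$. The differences are minor. The paper cites Theorem~\ref{thm:paper3} for sufficiency, whereas you verify the orbitwise construction directly. You also make explicit the sign argument ($0<C<1$ from the site identity) that rules out $C=0$, $C=1$ and the root $1/(1-\sqrt K)$; the paper's proof leaves this implicit except at neutral sites.
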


\begin{proof}
Sufficiency is Theorem~\ref{thm:paper3}. For necessity, let
$\pi\in\Sigma^\circ$ with $C(\pi,\gamma)\equiv C$.
Positivity of the denominator \textup{(S2)} gives
$\langle\pi,u(\cdot;\gamma)-C\,s(\cdot;\gamma)\rangle=0$ for all
$\gamma$; expanding by \textup{(S2)}--\textup{(S3)}, using the
linear independence of the $f_\nu$ and reindexing the dual term by
$z\mapsto\sigma(z)$,
\[
\sum_{z}A_\nu(z)\Bigl[(1-C)\,\pi_z
-C\,\kappa(\sigma(z))\,\pi_{\sigma(z)}\Bigr]=0
\qquad\text{for all }\nu.
\]
Full column rank \textup{(S2)} forces the bracket to vanish at
every site. Evaluating it at $z$ and at $\sigma(z)$ and multiplying
the two identities, $\pi_z\pi_{\sigma(z)}>0$ yields
$\kappa(z)\kappa(\sigma(z))=((1-C)/C)^2$ for every $z$, which is
\textup{(S4)}; at a neutral site the vanishing bracket gives
directly $\kappa(z_0)=(1-C)/C=\sqrt K$. Without \textup{(S4)},
reset-neutral distributions may still exist, but only on proper
faces: on any
collection of orbits whose products $\kappa(z)\kappa(\sigma(z))$
share a common value $K_0$, the orbitwise critical ratios define a
distribution that is reset-neutral with value
$1/(1+\sqrt{K_0})$, supported on that collection. An \emph{interior} reset-neutral distribution --- one lying in
$\Delta^\circ_{m-1}$ --- therefore exists exactly when all orbit
products agree, which is precisely \textup{(S4)}: compatibility is
the global synchronisation of the orbitwise critical values.
\end{proof}

\begin{remark}[Logical structure of the axioms]
\label{rem:axioms}
The system \textup{(S1)--(S4)} is a tower rather than a flat list:
each axiom is formulated in the vocabulary of the previous ones.
Within it, \textup{(S1)} is in fact implied by \textup{(S2)} --- a
ratio of linear forms with strictly positive denominator extends
real-analytically to an open neighbourhood of the closed simplex,
the denominator remaining non-zero there --- and is retained only to
delimit the ambient class of functionals. None of the three structural
axioms is redundant within the tower: representability without duality
(\textup{(S2)}$\,\wedge\,\neg$\textup{(S3)}: choose the $B_\nu$
with $\nu$-dependent ratios) and duality without compatibility
(\textup{(S2)}--\textup{(S3)}$\,\wedge\,\neg$\textup{(S4)}:
choose $\kappa$ with orbit-dependent products) are both trivially
realisable. Proposition~\ref{prop:sharp} completes the picture:
given \textup{(S1)--(S3)}, the compatibility of the weights is
\emph{equivalent} to the existence of interior reset-neutral
distributions.
\end{remark}

\begin{remark}[The coupling as a derived observable]
\label{rem:bridge}
The axioms admit an alternative, representation-theoretic reading
in which the object placed first is not the functional $C$ but a
representation of the simplex. It is a useful reading rather than a
canonical equivalence: Proposition~\ref{prop:invariants} makes
$(\sigma,\kappa)$ invariants of the pair, but does not make the
representation below unique. No
new structure is needed for it: let $\mathcal H=\mathbb R^N$ carry
its standard inner product and orthonormal basis
$\{e_\nu\}_{\nu=1}^N$, send each site to its column of spectral
coefficients, $\delta_z\mapsto\sum_\nu A_\nu(z)e_\nu$, and extend
by linearity from $\mathbb R^m$. Restricted to the simplex this
gives the affine embedding
\[
\mathcal R:\Delta_{m-1}\longrightarrow\mathcal H,\qquad
\mathcal R(\pi)
=\sum_\nu\Bigl(\sum_z\pi_z A_\nu(z)\Bigr)e_\nu ,
\]
injective by the full column rank of \textup{(S2)}. Write
$f(\gamma)=(f_1(\gamma),\dots,f_N(\gamma))\in\mathcal H$ for the
vector of spectral scalars and
$b(\pi)=\sum_z\pi_z\bigl(B_1(z),\dots,B_N(z)\bigr)\in\mathcal H$
for the dual contribution. Then
$\sum_z\pi_zu(z;\gamma)=\langle\mathcal R(\pi),f(\gamma)\rangle$
and
$\sum_z\pi_zs(z;\gamma)
=\langle\mathcal R(\pi)+b(\pi),f(\gamma)\rangle$, so that
\[
C(\pi,\gamma)
=\frac{\langle \mathcal R(\pi),\,f(\gamma)\rangle}
      {\langle \mathcal R(\pi)+b(\pi),\,f(\gamma)\rangle}
\]
--- a form in which the duality appears as the shift
$\mathcal R(\pi)\mapsto\mathcal R(\pi)+b(\pi)$ ---
is a $\gamma$-family of projective ratios of linear functionals on
$\mathcal R(\Delta_{m-1})$: the
coupling is an observable \emph{induced} by the representation. In
this reading, \textup{(S2)} states that the representation is
spectrally nondegenerate, with positive pairing against $s$;
\textup{(S3)} that it is compatible with an involutive duality; and
\textup{(S4)} that the duality carries a constant compatibility
factor. The regularity \textup{(S1)} is then automatic --- $\mathcal
R$ is linear and each projection is a ratio of linear forms with
nonvanishing denominator --- in agreement with
Remark~\ref{rem:axioms}. The simplex thus carries two natural maps
into a common representation space: the affine embedding
$\mathcal R$, through which the algebra acts, and the square-root
map of Section~\ref{sec:fisher-rao}, through which the Fisher--Rao
geometry is measured; the results of this paper describe how structures
transported by the former become geodesic objects under the latter.
A systematic development of this viewpoint is deferred to the
sequel.
\end{remark}

\begin{remark}[Critical distributions and the reversible measure]
\label{rem:pi-mu}
The axioms \textup{(S1)--(S4)} make no reference to any measure,
and not every spectral duality structure arises from a Markov
chain: in the abstract setting, the weights $\kappa$ are the
primitive object. When the structure \emph{is} realised by a chain
in the $(\sigma,\kappa)$-reversible class of Paper~IV --- as the
canonical realisation is --- the reversible measure is a derived
object satisfying $\mu(z)\propto\kappa(z)^{-1}$ (for the biased
walk, $\mu(z)\propto(p/q)^z$). Combining this with the ratio
constraint \textup{(i)} of Theorem~\ref{thm:paper3} gives
\[
\pi^\ast_z\propto\sqrt{\mu(z)}\qquad\text{on each orbit of }\sigma:
\]
within each orbit, the critical distributions are proportional to
the square root of the reversible measure. The exponent $1/2$ is
not incidental: it is dictated by the critical ratios
$\sqrt{\kappa(\sigma(z))/\kappa(z)}$ themselves. In the reversible realisation, therefore, the critical
ratios may be viewed not merely as the solution of the neutrality
equations but as the image of the reversible measure under a fixed
transformation, $\pi^\ast=\mathcal F(\mu)$ orbitwise with
$\mathcal F(x)=\sqrt x$ --- a reading available in that class only,
since the abstract axioms refer to no measure at all.
This is a first indication of the interplay between the spectral
weighting of Papers~I--II and the geometric structure developed
from Section~\ref{sec:fisher-rao} onwards; the same square root
will reappear there as the isometry $\Phi$.
\end{remark}

\subsection{The orientation field}
\label{subsec:orientation}

Fix a reference point $\pi^\ast\in\Sigma^\circ$. The
\emph{orientation field} is the family of vectors
$\psi(\gamma)\in\mathbb R^m$ with components
\begin{equation}
\psi_z(\gamma):=u(z;\gamma)-C^\ast\,s(z;\gamma),
\qquad z\in\{z_1,\dots,z_m\},
\label{eq:psi-def}
\end{equation}
and the associated \emph{orientation functional}
\begin{equation}
L_\gamma(\pi):=\langle\pi-\pi^\ast,\psi(\gamma)\rangle
=\sum_{i=1}^m(\pi_i-\pi^\ast_i)\,\psi_{z_i}(\gamma),
\label{eq:L-def}
\end{equation}
where $\langle\cdot,\cdot\rangle$ denotes the standard pairing of
$\mathbb R^m$ (no metric structure is invoked; $\psi(\gamma)$ is a
covector).

Since $\pi^\ast\in\Sigma$ and $C(\pi^\ast,\gamma)=C^\ast$, the numerator of
$C(\pi^\ast,\gamma)-C^\ast$ vanishes, that is
$\langle\pi^\ast,\psi(\gamma)\rangle=0$, and therefore
\[
L_\gamma(\pi)=\langle\pi,\psi(\gamma)\rangle .
\]
Moreover, for every $\pi\in\Delta_{m-1}$,
\begin{equation}
C(\pi,\gamma)-C^\ast
=\frac{\langle\pi,\,u(\cdot;\gamma)-C^\ast s(\cdot;\gamma)\rangle}
      {\langle\pi,s(\cdot;\gamma)\rangle}
=\frac{L_\gamma(\pi)}{\langle\pi,s(\cdot;\gamma)\rangle}.
\label{eq:C-minus-Cstar}
\end{equation}
The denominator is strictly positive by \textup{(S2)}, so
$L_\gamma$ and $C-C^\ast$ carry the same sign everywhere. This
identity is used repeatedly below: it is what makes the
orientation functional an exact record of the deviation of the
coupling from its invariant value.

\begin{remark}[Relation to the linear response functional of
Paper~III]
\label{rem:normalisation}
Paper~III (Definition~5.2) works with the differential of
$C(\cdot,\gamma)$ at $\pi^\ast$, whose components are
$\bar s_{\pi^\ast}(\gamma)^{-1}\,\psi_{z_i}(\gamma)$. The two objects
differ by the scalar factor $\bar s_{\pi^\ast}(\gamma)^{-1}$, which is
strictly positive by \textup{(S2)}. Consequently they have the same
kernels, span the same subspace as $\gamma$ varies, and
$\operatorname{sgn}L_\gamma(\pi)$ is the same for both. We adopt
the unnormalised version \eqref{eq:psi-def} for algebraic
transparency: all statements below about spans, kernels and signs
are insensitive to the choice.
\end{remark}

The family $\{\psi(\gamma)\}_{\gamma\in(0,1)}$ concentrates the
entire dependence of the reset response on the parameter $\gamma$;
understanding its structure --- the task of
Section~\ref{sec:finite-reduction} --- is therefore tantamount to
understanding the reset landscape itself.

Following Paper~III (Definition~5.3), define the \emph{response
span} and its dimension
\[
V:=\operatorname{span}\{\psi(\gamma):\gamma\in(0,1)\}
\subseteq\mathbb R^m,
\qquad r:=\dim V.
\]
Paper~III (Theorem~5.4) shows that the kernels
$\bigl\{\ker\bigl(\psi(\gamma)|_{T_{\pi^\ast}\Delta_{m-1}}\bigr)\bigr\}$
form a fan of hyperplanes in
$T_{\pi^\ast}\Delta_{m-1}$ whose common intersection --- the local
separatrix $\Sigma_{\mathrm{loc}}$, the directions along which $C$
is insensitive to $\gamma$ at first order --- is exactly the
annihilator of $V$, of dimension $(m-1)-r$. The dimension $r$ is
thus the fundamental finite-dimensional complexity parameter of
the response landscape.
Its exact value under \textup{(S2)--(S4)} is not determined there;
Section~\ref{sec:finite-reduction} \emph{will show} that, under
\textup{(S2)--(S4)}, compatibility collapses each non-trivial orbit
to a single response direction, that the directions associated with
distinct orbits are independent, and that every non-trivial orbit is
active. Consequently $r=n_{\mathrm{pair}}$, neutral sites
contributing none. The parameter $r$ controls the codimension of
the local separatrix and, with it, the complexity of the landscape:
for $r=1$ a single linear functional governs the response, while
for $r>1$ several functionals compete, producing the transition
region analysed in Section~\ref{sec:beyond}.

\subsection{The Global Orientation Conjecture}
\label{subsec:conjecture}

The local theory of Paper~III leaves open the global question:
does the sign of the response $\partial_\gamma C$ away from
$\pi^\ast$ remain governed by the linear functional $L_\gamma$? This
is the content of:

\begin{conjecture}[{Global orientation principle, Paper~III}]
\label{conj:global}
Under \textup{(S1)--(S4)},
\[
\operatorname{sgn}\bigl(\partial_\gamma C(\pi,\gamma)\bigr)
=\operatorname{sgn}\,L_\gamma(\pi)
\qquad\text{for all }\pi\in\Delta_{m-1}^\circ,\
\gamma\in(0,1).
\]
By \eqref{eq:C-minus-Cstar}, $\operatorname{sgn}L_\gamma(\pi)
=\operatorname{sgn}\bigl(C(\pi,\gamma)-C^\ast\bigr)$ identically,
so the conjectured identity reads
$\operatorname{sgn}\partial_\gamma C
=\operatorname{sgn}(C-C^\ast)$ whence, wherever $C(\pi,\gamma)\neq C^\ast$,
\[
\frac{\partial}{\partial\gamma}\bigl|C(\pi,\gamma)-C^\ast\bigr|
=\operatorname{sgn}\bigl(C(\pi,\gamma)-C^\ast\bigr)\,
\partial_\gamma C(\pi,\gamma)>0 .
\]
Thus $|C(\pi,\cdot)-C^\ast|$ is non-decreasing on $(0,1)$ and
strictly increasing on any interval on which it does not vanish.
\textup{(}The qualification matters: $\pi\notin\Sigma$ only says
that $C(\pi,\cdot)$ is non-constant, and $L_\gamma(\pi)$ may still
vanish at isolated rates.\textup{)} The response would then be
monotone in $\gamma$ for every interior $\pi$. The geometric reading as a two-sided
boundary, however, requires care, because three distinct objects
are in play. The instantaneous zero set at a given $\gamma$ is the
hyperplane
$\ker\bigl(\psi(\gamma)|_{T_{\pi^\ast}\Delta_{m-1}}\bigr)$, of
codimension one, which moves with $\gamma$. Its intersection over
all $\gamma$ is the local separatrix $\Sigma_{\mathrm{loc}}=V^\perp$,
of codimension $r$ in the tangent space. And $\Sigma$ itself is the
global object, with
$\dim(\Sigma^\circ)=n_{\mathrm{pair}}+n_0-1$, so
that it too has codimension $r$. The three coincide only when
$r=1$; and only then does a single hyperplane, fixed as $\gamma$
varies, separate the simplex into two sides. For $r\ge2$ each
instantaneous hyperplane strictly contains $\Sigma$, no one of them
is $\Sigma$, and a set of codimension $r\ge2$ does not separate at
all.
\end{conjecture}

By Remark~\ref{rem:normalisation} the statement is unchanged if
$\psi$ is replaced by the normalised response functional of
Paper~III. This paper settles the conjecture in its pointwise form
at the \emph{starting sites}: distributions supported on neutral
sites are exactly reset-neutral and, under the vertex condition
\textup{(V)} of Section~\ref{sec:sign-theorem} --- satisfied by
the canonical realisation --- the response at
every vertex of the simplex is strictly monotone in $\gamma$, with
the two members of each orbit responding in opposite directions
(Section~\ref{sec:sign-theorem}). Over the full simplex the behaviour is
governed by the invariant $r$ together with a sign condition, and
the two are independent; this is the content of the following
remark.

\begin{remark}[Rotation versus reversal]
\label{rem:rotation-reversal}
Two distinct things can prevent a $\gamma$-independent orientation,
and they should not be conflated.
\begin{itemize}
\item[\textup{(i)}] \emph{Rotation.} If $r\ge2$ the response field
is not confined to a line and the direction of $\psi(\gamma)$ may
turn as $\gamma$ varies. This is a genuine possibility for $r\ge2$,
not a necessity: rotation need not occur in every such structure.
\item[\textup{(ii)}] \emph{Reversal.} If $r=1$ no rotation is
possible, since all $\psi(\gamma)$ span the same line. But
$\psi(\gamma)$ may still change sign \emph{within} that line, which
reverses the orientation without moving the hyperplane
$\ker\bigl(\psi(\gamma)|_{T_{\pi^\ast}\Delta_{m-1}}\bigr)$. It is the
vertex condition \textup{(V)} of
Section~\ref{sec:sign-theorem} that excludes this.
\item[\textup{(iii)}] \emph{Miscalibration.} Even with $r=1$ and
\textup{(V)}, a third obstacle remains. Writing
$\psi(\gamma)=\varphi(\gamma)\mathbf n$ with $\mathbf n$ a fixed
director, \textup{(V)} forces the Wronskians
$\mathcal W[\varphi,s(z;\cdot)]$ to share a strict sign
$\varepsilon$, and hence forbids $\varphi$ from changing sign; but
it does not force that constant sign to \emph{agree} with
$\varepsilon$. The sign law proved in
Section~\ref{sec:sign-theorem} is referred to $\varepsilon$ and
$\mathbf n$, whereas Conjecture~\ref{conj:global} is referred to
$\psi(\gamma)$, and the two coincide precisely when
$\varepsilon=\operatorname{sgn}\varphi$. This calibration is not
implied by \textup{(S1)--(S4)}: that it can fail --- and hence that
it is a genuine third requirement rather than a consequence of the
first two --- was established by T.~Newton \cite{Newton}, who
exhibited two-site structures satisfying \textup{(S1)--(S4)} and
\textup{(V)} in which $\varepsilon$ and $\operatorname{sgn}\varphi$
are opposite, so that the identification of
Conjecture~\ref{conj:global} holds with the reversed sign
throughout the interior. In the equivalent formulation given there,
the calibration is the positivity of the product of the Wronskian
governing the rate response and the determinant governing the
distribution response.
\end{itemize}
The kernel of a covector is unchanged by multiplication by a
negative scalar, so the projective field $[\psi]$ carries strictly
less information than the oriented field $\psi$: the structural
axioms organise the projective data, whereas \textup{(V)} supplies
the additional sign information needed to orient it. Accordingly
$r=1$ alone does not suffice, and neither does $r=1$ together with
\textup{(V)}: within \textup{(S1)--(S4)} there are structures
satisfying both for which the identification of
Conjecture~\ref{conj:global} nevertheless fails, by
\textup{(iii)} above. What $r=1$ and \textup{(V)} do give is the
sign law of Section~\ref{sec:sign-theorem}, stated with respect to
$\varepsilon$ and $\mathbf n$; the identification as conjectured
requires in addition the calibration
$\varepsilon=\operatorname{sgn}\varphi$, which the canonical
realisation satisfies.
\end{remark} The geometry of the rotating case is
combinatorial and is deferred to the sequel
(Section~\ref{sec:beyond}).

\section{Fisher--Rao geometry and the spherical representation}
\label{sec:fisher-rao}

The framework of Section~\ref{sec:inherited} is purely algebraic: a
functional, a simplex, an involution. We now introduce the single
new structure of this paper --- a Riemannian metric on the simplex
of reset distributions --- and its canonical realisation as
elementary spherical geometry. The metric is not a modelling
choice: it is the canonical metric singled out by \v{C}encov's
theorem, and its canonical status makes it a natural intrinsic
metric for the reset simplex rather than an arbitrary modelling
choice.

\subsection{The Fisher--Rao metric}
\label{subsec:FR}

On the open simplex $\Delta_{m-1}^\circ$, the \emph{Fisher--Rao
metric} is the Riemannian metric
\begin{equation}
g^{\mathrm{FR}}_\pi(h,k)\;=\;\sum_{i=1}^m\frac{h_i k_i}{\pi_i},
\qquad h,k\in T_\pi\Delta_{m-1}
=\Bigl\{v\in\mathbb R^m:\textstyle\sum_i v_i=0\Bigr\}.
\label{eq:FR}
\end{equation}
At each point it is a weighted inner product on the tangent space,
with weights $1/\pi_i$ depending on the base point. Its canonical
status is classical: by \v{C}encov's theorem
\cite{Cencov,AmariNagaoka} (for the Riemannian background used
below --- geodesics on the sphere and totally geodesic submanifolds
--- see \cite{Sakai,Helgason}), $g^{\mathrm{FR}}$ is, up to a constant
factor, the canonical monotone metric on the simplex --- the one
invariant under Markov embeddings. Whatever geometry the reset
landscape possesses in this metric is therefore a property of the
statistical structure itself and not of a particular
parametrisation.

\begin{remark}[The weighted structure at the critical point]
\label{rem:two-weights}
At a critical distribution $\pi^\ast\in\Sigma$, the metric
\eqref{eq:FR} freezes into a fixed weighted inner product with
weights $1/\pi^\ast_i$ on the reset sites --- the direct analogue, on
the space of reset parameters, of the weighted inner product on
the state space that diagonalises the generator in Papers~I--II.
For structures in the reversible class of
Remark~\ref{rem:pi-mu}, the two weightings are intertwined
orbitwise through a square root: on each orbit,
$1/\pi^\ast_z\propto\mu(z)^{-1/2}$. The same square root reappears,
in a different role, in Proposition~\ref{prop:isometry} below, and
the coincidence is not accidental. It first emerges from the
spectral duality determining the critical distributions, and
reappears as the coordinate change that turns the Fisher--Rao
metric into the round metric of the sphere. The two maps are not
literally the same --- one symmetrises an operator on the state
space, the other embeds the parameter simplex in a sphere --- but
they are governed by the same square-root transformation, and in
the reversible class the connection is concrete rather than
analogical: the square root of the reversible measure determines
the same orbitwise ratios as the critical distributions, so that
after normalisation it selects a distinguished point of the
interior separatrix. \textup{(}The inter-orbit weights remain
free, consistently with
$\dim(\Sigma^\circ)=n_{\mathrm{pair}}+n_0-1$.\textup{)} Their interaction becomes concrete in
Section~\ref{sec:geometry}.
\end{remark}

\subsection{The spherical representation}
\label{subsec:sphere}

Define
\begin{equation}
\Phi:\Delta_{m-1}^\circ\longrightarrow\mathbb R^m,
\qquad
\Phi(\pi)=2\sqrt{\pi}:=\bigl(2\sqrt{\pi_1},\dots,2\sqrt{\pi_m}\bigr).
\label{eq:Phi}
\end{equation}

The same formula defines a continuous map
$\bar\Phi:\Delta_{m-1}\to S^{m-1}(2)$ on the closed simplex, onto the
closed positive octant; $\bar\Phi$ is used only when a boundary
stratum has to be named, and every metric statement below refers to
$\Phi$ on $\Delta^\circ_{m-1}$, where alone the Fisher--Rao metric
is defined.

\begin{proposition}[Spherical realisation]
\label{prop:isometry}
$\Phi$ is an isometry from $(\Delta_{m-1}^\circ,g^{\mathrm{FR}})$
onto the open positive octant of the round sphere of radius $2$,
\[
S^{m-1}_+(2)\;=\;\Bigl\{x\in\mathbb R^m:\ \textstyle\sum_i
x_i^2=4,\ x_i>0\Bigr\},
\]
equipped with the metric induced from the Euclidean inner product
of $\mathbb R^m$.
\end{proposition}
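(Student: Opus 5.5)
We argue directly from the definitions, in three steps: the image of $\Phi$, bijectivity onto the octant, and the pullback of the Euclidean metric.

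\emph{Image.} For $\pi\in\Delta^\circ_{m-1}$ set $x=\Phi(\pi)$. Then $\sum_i x_i^2=4\sum_i\pi_i=4$ and $x_i=2\sqrt{\pi_i}>0$, so $\Phi$ maps into $S^{m-1}_+(2)$.

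\emph{Bijectivity.} The map $x\mapsto(x_1^2/4,\dots,x_m^2/4)$ is an explicit inverse. On the open octant it lands in the open simplex, since its entries are positive and sum to $1$. Both $\Phi$ and this inverse are smooth there, because $\sqrt{\cdot}$ is smooth on $(0,\infty)$. Hence $\Phi$ is a diffeomorphism of $\Delta^\circ_{m-1}$ onto $S^{m-1}_+(2)$.

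\emph{Metric.} Take $h\in T_\pi\Delta_{m-1}$, so $\sum_i h_i=0$. Differentiating componentwise gives
\[
d\Phi_\pi(h)=\bigl(h_i/\sqrt{\pi_i}\bigr)_{i=1}^m .
\]
This vector is tangent to the sphere at $x=\Phi(\pi)$, because
\[
\langle x,d\Phi_\pi(h)\rangle=\sum_i 2\sqrt{\pi_i}\,\frac{h_i}{\sqrt{\pi_i}}=2\sum_i h_i=0 .
\]
The Euclidean pairing of two such images is
\[
\langle d\Phi_\pi(h),d\Phi_\pi(k)\rangle=\sum_i\frac{h_ik_i}{\pi_i}=g^{\mathrm{FR}}_\pi(h,k),
\]
which is \eqref{eq:FR}. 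So the pullback of the induced round metric under $\Phi$ is exactly $g^{\mathrm{FR}}$. The factor $2$ in $\Phi$ is precisely what removes the $1/4$ that would otherwise come from differentiating $\sqrt{\pi_i}$.

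Since $\Phi$ is a diffeomorphism that pulls back the metric to $g^{\mathrm{FR}}$, it is a Riemannian isometry onto $S^{m-1}_+(2)$. The only point needing care is that $d\Phi_\pi$ maps $T_\pi\Delta_{m-1}$ isomorphically onto $T_x S^{m-1}$. Injectivity follows because the pullback metric is positive definite. The dimensions agree, both being $m-1$. Surjectivity then follows from injectivity and equal dimensions, and it is in any case automatic for a local diffeomorphism between manifolds of equal dimension.
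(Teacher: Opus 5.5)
Your proof is correct and follows the paper's argument step for step: the image lies on the sphere, the explicit inverse is $x\mapsto(x_i^2/4)_i$, the differential $d\Phi_\pi(h)=h/\sqrt\pi$ is tangent because $\langle\Phi(\pi),d\Phi_\pi(h)\rangle=2\sum_i h_i=0$, and $\langle d\Phi_\pi(h),d\Phi_\pi(k)\rangle=g^{\mathrm{FR}}_\pi(h,k)$. Your added remarks on the smoothness of the inverse and on $d\Phi_\pi$ being an isomorphism of tangent spaces simply make explicit what the paper leaves implicit.
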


\begin{proof}
Since $\sum_i\Phi(\pi)_i^2=4\sum_i\pi_i=4$ and $\Phi(\pi)_i>0$ on
the open simplex, $\Phi$ maps into $S^{m-1}_+(2)$; it is a
bijection onto it with inverse $x\mapsto(x_i^2/4)_i$. Its
differential is $d\Phi_\pi(h)=h/\sqrt{\pi}$ componentwise, which
is tangent to the sphere at $\Phi(\pi)$ because
$\langle\Phi(\pi),d\Phi_\pi(h)\rangle=2\sum_ih_i=0$. Finally,
\[
\bigl\langle d\Phi_\pi(h),\,d\Phi_\pi(k)\bigr\rangle
=\sum_{i=1}^m\frac{h_ik_i}{\pi_i}
=g^{\mathrm{FR}}_\pi(h,k),
\]
so the Euclidean inner product pulls back exactly to
\eqref{eq:FR}.
\end{proof}

\begin{corollary}[Explicit spherical geometry]
\label{cor:geodesics}
In the Fisher--Rao geometry of the simplex, geodesics are the
preimages of great-circle arcs, and the geodesic distance is the
Bhattacharyya angle \cite{Bhattacharyya1943}:
\[
d_{\mathrm{FR}}(\pi,\rho)
=2\arccos\Bigl(\sum_{i=1}^m\sqrt{\pi_i\rho_i}\Bigr).
\]
\end{corollary}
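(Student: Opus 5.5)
The plan is to transport everything through the isometry $\Phi$ of Proposition~\ref{prop:isometry}. Isometries carry geodesics to geodesics and preserve Riemannian distance. The corollary therefore reduces to two standard facts about the round sphere $S^{m-1}(2)\subset\mathbb R^m$: its geodesics are great-circle arcs, and its intrinsic distance between $x$ and $y$ is $2\theta$, where $\theta\in[0,\pi]$ is the angle with $\cos\theta=\langle x,y\rangle/4$.

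\textbf{Step 1: geodesics.} Every geodesic of the octant $S^{m-1}_+(2)$ with its induced metric is a geodesic of the full sphere. The octant is an open subset, so the geodesic equations are local and coincide with those of the sphere. Hence geodesics in the octant are the arcs of great circles that lie in it. Pulling back by $\Phi^{-1}$, Fisher--Rao geodesics are exactly the preimages of great-circle arcs. In coordinates, for $\pi,\rho$ the arc is $t\mapsto\Phi^{-1}$ of the normalised combination of $\sqrt\pi$ and $\sqrt\rho$.

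\textbf{Step 2: distance.} For $x=2\sqrt\pi$ and $y=2\sqrt\rho$ we have $\langle x,y\rangle/4=\sum_i\sqrt{\pi_i\rho_i}$, so the spherical great-circle distance is $2\arccos\bigl(\sum_i\sqrt{\pi_i\rho_i}\bigr)$.

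\textbf{Step 3: the main obstacle.} Here is the one point that needs care. The Fisher--Rao distance on $\Delta^\circ_{m-1}$ is the infimum of lengths of curves staying inside the open octant. A priori this could exceed the ambient spherical distance, which allows curves through the whole sphere. I will close this gap with convexity. The positive octant is geodesically convex: the minimising great-circle arc between two points with positive coordinates is
\[
t\mapsto\frac{\sin((1-t)\theta)\,x+\sin(t\theta)\,y}{\sin\theta}.
\]
Here $\theta<\pi$, because $\langle x,y\rangle>0$. Both coefficients are nonnegative and not both zero, so every coordinate of the arc stays strictly positive.

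The minimiser therefore lies in the open octant. Since any competing curve in the octant is also a curve on the sphere, the infimum over the octant equals the spherical distance. Applying the isometry $\Phi$ then yields the Bhattacharyya formula for $d_{\mathrm{FR}}$.
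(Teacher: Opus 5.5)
Your proof is correct and follows the paper's own route: the paper gives no separate argument and states the result as an immediate corollary of the isometry $\Phi$ of Proposition~\ref{prop:isometry}. Your Step~3 makes explicit a point the paper leaves tacit and only gestures at in Remark~\ref{rem:octant}: the intrinsic distance of the open octant equals the ambient great-circle distance, because the minor arc between strictly positive points stays strictly positive. The only omission is the trivial case $\pi=\rho$, where $\theta=0$ and your interpolation formula is undefined.
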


The practical content of Proposition~\ref{prop:isometry} is that
the point-dependent inner product \eqref{eq:FR} becomes, in the
coordinates $x=\Phi(\pi)$, the standard dot product of
$\mathbb R^m$, independent of the base point. The weighted Hilbert
structure of the reset parameters thus admits a global,
unweighted Euclidean realisation, and all metric complexity is
absorbed into the coordinate change $\Phi$. This simplification
is the geometric engine of the paper: from here on, every metric
statement about the reset landscape is a statement of standard
Riemannian geometry on a round sphere, and the principal metric
objects used below --- geodesics, distances, angles, reflections
--- are computable in closed form.
Moreover, $\Phi$ is an isometry of Riemannian manifolds, not
merely of metric spaces: the Levi-Civita connection, the constant
sectional curvature $1/4$, the exponential and logarithm maps,
Jacobi fields --- every local differential-geometric construction
on the simplex is transported from the round sphere through
$\Phi$. We shall use only a small part of this inheritance, but
all of it is available.

\begin{remark}[The octant]
\label{rem:octant}
The image of $\Phi$ is the \emph{octant}, not the full sphere:
the positivity of probabilities is encoded as the positivity of
all coordinates. The boundary of the octant corresponds to
degenerate reset distributions (some $\pi_i=0$). Two elementary
consequences will be used repeatedly. First, any two points of
the closed octant subtend an angle at most $\pi/2$, since
nonnegative unit vectors have nonnegative inner product: the
octant sits entirely within the ``short-distance'' regime of
spherical geometry. In particular, no two reset distributions are
antipodal, and the cut locus of every point of the octant lies
outside the octant: several non-uniqueness phenomena of spherical
geometry, originating in antipodal configurations, are simply
absent from the positive simplex. Second --- and less obviously
--- the
positivity constraint is a geometric \emph{resource} rather than
a nuisance: several exact statements of
Section~\ref{sec:geometry} hold precisely because the ambient
space is the octant and not the whole sphere.
\end{remark}

\subsection{Gradients, covectors and normals}
\label{subsec:gradients}

The orientation field $\psi(\gamma)$ of
Section~\ref{sec:inherited} was introduced as a covector: it acts
on displacements through the standard pairing, with no metric
involved. A metric converts covectors into vectors, and the
conversion does real work in this paper, so we record it
explicitly. For a $\mathcal C^1$ functional
$F:\Delta_{m-1}^\circ\to\mathbb R$ with ambient differential
$\nabla F$, the Fisher--Rao gradient is characterised by
$g^{\mathrm{FR}}_\pi(\operatorname{grad}^{\mathrm{FR}}F,h)=
\langle\nabla F,h\rangle$ for all tangent $h$, which gives
\begin{equation}
\operatorname{grad}^{\mathrm{FR}}_\pi F
=\Pi_\pi\bigl(\pi\odot\nabla F\bigr),
\qquad
(\pi\odot\nabla F)_i=\pi_i\,\partial_iF,
\label{eq:grad}
\end{equation}
where $\Pi_\pi$ is the $g^{\mathrm{FR}}$-orthogonal projection
onto $T_\pi\Delta_{m-1}$. In particular, a covector $\mathbf n$
(defining a hyperplane through the Euclidean pairing, as in
Section~\ref{sec:inherited}) and its geometric realisation
$\Pi_\pi(\pi\odot\mathbf n)$ --- the Fisher--Rao normal to that
hyperplane \emph{within} $T_\pi\Delta_{m-1}$, the projection being
needed because $\pi\odot\mathbf n$ is not tangent in general ---
are \emph{different} objects, related by the metric. Conflating
them produces wrong angles on the sphere; distinguishing them is
what will allow the algebraic normals of
Section~\ref{sec:finite-reduction} to become genuine geometric
normals in Section~\ref{sec:geometry}.

\medskip
A closing observation frames what follows, and we state it as a
principle. \emph{The Fisher--Rao manifold is universal: it depends
only on the number $m$ of reset sites. Every reset model with $m$
sites therefore lives on exactly the same Riemannian manifold.
What distinguishes one model from another is not the ambient
geometry but the distinguished algebraic objects embedded in it
--- the separatrix $\Sigma$, the response span $V$, the involution
$\sigma$ and its orbit decomposition.} The geometry developed in
the remainder of the paper is precisely the interaction between
these universal and model-dependent structures: the algebraic
skeleton of Section~\ref{sec:finite-reduction} placed on the stage
built here (Section~\ref{sec:geometry}). The direction of
explanation is thereby inverted: the model no longer generates the
structure --- it instantiates it.

\begin{remark}[Affine triviality of the tangent bundle]
\label{rem:affine-triviality}
The simplex is an open subset of the affine hyperplane
$\{x:\langle x,\mathbf 1\rangle=1\}$, so its tangent bundle is
trivial: every tangent space is canonically the zero-sum hyperplane
\[
T_0:=\{v\in\mathbb R^m:\langle v,\mathbf 1\rangle=0\}
\]
of mass-preserving
perturbations, and for any two distributions the difference
$\rho-\pi$ lies in $T_0$. The Fisher--Rao geometry may therefore be
regarded as a smoothly varying inner product on a single fixed
vector space: the curvature resides entirely in the
$\pi$-dependence of $g_\pi$, not in the topology of the bundle.
Equivalently, the rescaling $v\mapsto v/\pi$ identifies
$(T_0,g_\pi)$ isometrically with the space
$L^2_0(\pi)=\{f:\sum_i f_i\pi_i=0\}$ of zero-mean observables with
the $L^2(\pi)$ inner product --- a fixed-form inner product on a
$\pi$-dependent space --- providing a natural bridge with the
weighted Hilbert-space formulation of the spectral theory in
Paper~I. Note finally the dual role of the response data: tangent
vectors are zero-sum, while the orientation field $\psi(\gamma)$
and the orbit directors act on the simplex as covectors, well
defined modulo the constant vector $\mathbf 1$ \emph{when paired
with tangent directions only}. The qualification is essential: the
functional $L_\gamma(\pi)=\langle\pi,\psi(\gamma)\rangle$ of
Section~\ref{sec:inherited} pairs $\psi$ with a point of the
simplex rather than with a tangent vector, and there the shift is
not free --- $\langle\pi,\psi+c\mathbf 1\rangle=L_\gamma(\pi)+c$.
The representative of $\psi$ fixed by
$\psi=u-C^\ast s$ is the one used throughout.
\end{remark}

\begin{remark}[Projective interpretation: simplex and sphere as
gauges]
\label{rem:projective}
The coupling functional is homogeneous of degree zero,
$C(\lambda\pi,\gamma)=C(\pi,\gamma)$ for every $\lambda>0$,
since numerator and denominator are linear in $\pi$. The homogeneity of $C$ therefore also permits a projective
reading, on the space of rays of the positive cone
$\mathbb R^m_{>0}$; the statistical manifold on which the
Fisher--Rao geometry lives remains the normalised simplex. The simplex is its
affine section $\sum_i\pi_i=1$; the spherical octant of
Proposition~\ref{prop:isometry} is its metric section
$\sum_ix_i^2=4$, $x=2\sqrt\pi$; and $\Phi$ is not a change of
state space but a change of section. The Fisher--Rao metric does
not create the spherical picture: under the square-root
representation it identifies the simplex section isometrically with
the spherical section of the same cone, making manifest a structure
already implicit in the homogeneity of $C$. The projective viewpoint is not needed in
this paper, but it underlies the topological questions raised in
the concluding section.
\end{remark}

\section{Orbit collapse and finite reduction}
\label{sec:finite-reduction}

This section establishes the structural core of the paper. The
orientation field $\psi(\gamma)$, a priori an infinite family indexed
by the reset parameter $\gamma\in(0,1)$, spans a subspace $V$ of
dimension equal to the number of orbits of the involution $\sigma$.
The mechanism is not spectral: it is an exact algebraic
collapse of each orbit, induced by the critical value
$C^\ast=1/(1+\sqrt K)$ of Theorem~\ref{thm:paper3}. The three structural
conditions each play one role: (S2) provides the spectral expansion,
(S3) builds the normal directions, and (S4) makes the orbit matrix
singular. We isolate the collapse as the fundamental phenomenon
(Lemma~\ref{lem:orbit-collapse}); finite reduction is its corollary
(Theorem~\ref{thm:fsr}).

\subsection{Inherited structure}
\label{subsec:inherited-structure}

We restate the structural conditions of
Section~\ref{sec:inherited} in the form in which they are used
here; they are the axioms of Definition~\ref{def:sds}, and nothing
below is assumed beyond them. The kernel data $u(z;\gamma)$,
$s(z;\gamma)$ on the reset sites $\{z_1,\dots,z_m\}$ admit:

\begin{itemize}
\item[\textbf{(S2)}] a representation
$C(\pi,\gamma)=\langle\pi,u(\cdot;\gamma)\rangle/
\langle\pi,s(\cdot;\gamma)\rangle$ whose denominator is strictly
positive on \emph{all} of $\Delta_{m-1}\times(0,1)$, boundary
included --- a clause of the axiom, not a consequence of the
expansion --- together with a spectral expansion
$u(z;\gamma)=\sum_{\nu=1}^N f_\nu(\gamma)A_\nu(z)$ and
$s(z;\gamma)=\sum_\nu f_\nu(\gamma)\big(A_\nu(z)+B_\nu(z)\big)$, with the
$f_\nu$ linearly independent as functions on $(0,1)$, and with the
$m$ \emph{site vectors}
\[
\mathbf A(z):=\bigl(A_\nu(z)\bigr)_{\nu=1}^N\in\mathbb R^N,
\qquad z\in\{z_1,\dots,z_m\},
\]
linearly independent --- equivalently, the matrix
$\big(A_\nu(z_i)\big)\in\mathbb R^{N\times m}$, whose \emph{columns}
are the site vectors $\mathbf A(z_i)$, has full column rank
$m\le N$. It is the independence of the $\mathbf A(z)$, one vector
per reset site, that is used below;
\item[\textbf{(S3)}] a twisted symmetry
$B_\nu(z)=\kappa(z)\,A_\nu(\sigma(z))$, with weights $\kappa(z)>0$
independent of $\nu$;
\item[\textbf{(S4)}] a compatibility relation
$\kappa(z)\,\kappa(\sigma(z))=K$.
\end{itemize}

The orientation field \eqref{eq:psi-def} has components
$\psi_z(\gamma)=u(z;\gamma)-C^\ast s(z;\gamma)$, where
$C^\ast=1/(1+\sqrt K)$ is the invariant scalar of
Theorem~\ref{thm:paper3}, and
$V=\operatorname{span}\{\psi(\gamma):\gamma\in(0,1)\}$ is the
response span, of dimension $r$. We recall from
Section~\ref{sec:inherited} that an \emph{orbit} of $\sigma$ is a
pair $O=\{z,\sigma(z)\}$ with $z\neq\sigma(z)$, that a
\emph{neutral site} is a fixed point $\sigma(z_0)=z_0$, for which
(S4) gives $\kappa(z_0)^2=K$ and hence, the weights being positive
by (S3), $\kappa(z_0)=+\sqrt K$; and that
$m=2n_{\mathrm{pair}}+n_0$. In each orbit we fix once and for all a
representative $z_O$, and order the two coordinates of the orbit
plane as $(z_O,\sigma(z_O))$; the choice affects the sign of the
director $\mathbf d_O$ below and nothing else. The content of this
section is that $r=n_{\mathrm{pair}}$.

\subsection{The normal directions}
\label{subsec:normals}

Substituting (S3) into the definition of $\psi$ and using the
expansion (S2),
\begin{equation}
\psi_z(\gamma)
=\sum_\nu f_\nu(\gamma)\Big[A_\nu(z)(1-C^\ast)-C^\ast\kappa(z)A_\nu(\sigma(z))\Big]
=\sum_\nu f_\nu(\gamma)\,[\mathbf n_\nu]_z,
\label{eq:decomp}
\end{equation}
where the \emph{normal directions} are defined by
\begin{equation}
[\mathbf n_\nu]_z := A_\nu(z)(1-C^\ast)-C^\ast\kappa(z)\,A_\nu(\sigma(z)).
\label{eq:normals}
\end{equation}
The decomposition \eqref{eq:decomp} is immediate from (S2)+(S3): the
$\gamma$-dependence is confined to the scalars $f_\nu$, and the
$\mathbf n_\nu$ are fixed.

\begin{lemma}
\label{lem:span}
$V=\operatorname{span}\{\mathbf n_\nu\}_\nu$.
\end{lemma}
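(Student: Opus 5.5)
The plan is to prove the two inclusions separately, using only the decomposition \eqref{eq:decomp} and the linear independence of the $f_\nu$ from \textup{(S2)}.

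The inclusion $V\subseteq\operatorname{span}\{\mathbf n_\nu\}_\nu$ is immediate. By \eqref{eq:decomp}, each $\psi(\gamma)=\sum_\nu f_\nu(\gamma)\mathbf n_\nu$ is a finite linear combination of the fixed vectors $\mathbf n_\nu$. The span of the family $\{\psi(\gamma)\}$ is therefore contained in $W:=\operatorname{span}\{\mathbf n_\nu\}$.

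For the reverse inclusion I argue by duality in $\mathbb R^m$. It suffices to show $V^\perp\subseteq W^\perp$ for the standard pairing, since both spaces are finite-dimensional subspaces of $\mathbb R^m$. Let $c\in\mathbb R^m$ annihilate every $\psi(\gamma)$. Then
\[
0=\langle c,\psi(\gamma)\rangle=\sum_{\nu=1}^N f_\nu(\gamma)\,\langle c,\mathbf n_\nu\rangle
\qquad\text{for all }\gamma\in(0,1).
\]
This is a vanishing linear combination of the $f_\nu$ with constant coefficients $\langle c,\mathbf n_\nu\rangle$. Linear independence of the $f_\nu$ on $(0,1)$ then forces $\langle c,\mathbf n_\nu\rangle=0$ for every $\nu$, so $c\in W^\perp$. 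Hence $W=(W^\perp)^\perp\subseteq(V^\perp)^\perp=V$.

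An alternative to the duality step is to observe that linear independence of the $f_\nu$ gives points $\gamma_1,\dots,\gamma_N$ with $\det\bigl(f_\nu(\gamma_j)\bigr)\neq0$. Inverting this matrix expresses each $\mathbf n_\nu$ as a combination of the vectors $\psi(\gamma_j)$. I will use the annihilator version, since it avoids having to justify the existence of such a nonsingular collocation.

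There is no real obstacle. The only point needing care is that the independence hypothesis concerns functions on all of $(0,1)$, which matches the range of $\gamma$ in the definition of $V$. Neither \textup{(S4)} nor the full column rank of $(A_\nu(z_i))$ is used here; those enter only later, in computing $\dim V$.
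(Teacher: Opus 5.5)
Your proposal is correct and follows the paper's proof: the inclusion $V\subseteq\operatorname{span}\{\mathbf n_\nu\}$ comes from \eqref{eq:decomp}, and the reverse inclusion from the same annihilator argument, where a functional vanishing on every $\psi(\gamma)$ gives $\sum_\nu f_\nu(\gamma)\,L(\mathbf n_\nu)=0$ and the linear independence of the $f_\nu$ kills each $L(\mathbf n_\nu)$. You are also right that neither \textup{(S4)} nor the full column rank of $(A_\nu(z_i))$ is used at this step; those hypotheses enter only in Theorem~\ref{thm:fsr}.
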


\begin{proof}
The inclusion $V\subseteq\operatorname{span}\{\mathbf n_\nu\}$ is
\eqref{eq:decomp}. Conversely, let $L$ be a linear functional on
$\mathbb R^m$ vanishing on $V$. Then
$\sum_\nu f_\nu(\gamma)\,L(\mathbf n_\nu)=0$ for every
$\gamma\in(0,1)$, and the linear independence of the $f_\nu$
\textup{(S2)} forces $L(\mathbf n_\nu)=0$ for every $\nu$. Thus
every functional annihilating $V$ annihilates each $\mathbf n_\nu$,
whence $\operatorname{span}\{\mathbf n_\nu\}\subseteq V$.
\end{proof}

The problem is therefore reduced to computing the span of a finite
set of fixed vectors.

\subsection{The orbit collapse lemma}
\label{subsec:collapse}

The key phenomenon is that each orbit contributes a single direction.

\begin{lemma}[Orbit collapse]
\label{lem:orbit-collapse}
Under \textup{(S3)--(S4)}, for each orbit $O=\{z,\sigma(z)\}$ the
matrix
\[
M_O=\begin{pmatrix}1-C^\ast & -C^\ast\kappa(z)\\[2pt]
-C^\ast\kappa(\sigma(z)) & 1-C^\ast\end{pmatrix}
\]
has rank exactly one, with
\[
\operatorname{im}M_O=\mathbb R\,\mathbf d_O,\qquad
\mathbf d_O:=\bigl(\sqrt{\kappa(z)},\,-\sqrt{\kappa(\sigma(z))}\bigr),
\qquad
\ker M_O=\mathbb R\bigl(\sqrt{\kappa(z)},\,\sqrt{\kappa(\sigma(z))}\bigr).
\]
Consequently, for every $\nu$ the pair
$\big([\mathbf n_\nu]_z,[\mathbf n_\nu]_{\sigma(z)}\big)$ is a scalar
multiple of the single fixed vector $\mathbf d_O$: the restriction of
every normal direction to a given orbit collapses onto one line.
\end{lemma}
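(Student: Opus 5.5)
The argument is a direct $2\times2$ computation, organised in three steps: the determinant of $M_O$, explicit image and kernel vectors, and the link to the normal directions. First I show $M_O$ is singular. Its determinant is $(1-C^\ast)^2-(C^\ast)^2\kappa(z)\kappa(\sigma(z))$, and by \textup{(S4)} this equals $(1-C^\ast)^2-(C^\ast)^2K$. With $C^\ast=1/(1+\sqrt K)$ we have $1-C^\ast=\sqrt K\,C^\ast$, so the determinant is $(C^\ast)^2K-(C^\ast)^2K=0$. Since the diagonal entry $1-C^\ast=\sqrt K/(1+\sqrt K)$ is strictly positive, $M_O\neq0$, and the rank is exactly one.

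Next I identify the image and kernel explicitly, working in the common factor $C^\ast$. Using $1-C^\ast=C^\ast\sqrt K$ and $\sqrt K=\sqrt{\kappa(z)}\sqrt{\kappa(\sigma(z))}$, the first row of $M_O$ becomes
\[
C^\ast\sqrt{\kappa(z)}\,\bigl(\sqrt{\kappa(\sigma(z))},\,-\sqrt{\kappa(z)}\bigr),
\]
and the second row becomes
\[
C^\ast\sqrt{\kappa(\sigma(z))}\,\bigl(-\sqrt{\kappa(\sigma(z))},\,\sqrt{\kappa(z)}\bigr).
\]
Hence
\[
M_O=C^\ast\,\mathbf d_O\,\bigl(\sqrt{\kappa(\sigma(z))},\,-\sqrt{\kappa(z)}\bigr)
\]
as an outer product. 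This displays $\operatorname{im}M_O=\mathbb R\,\mathbf d_O$ at once. It also shows that the kernel is the orthogonal complement of the row vector $(\sqrt{\kappa(\sigma(z))},-\sqrt{\kappa(z)})$, which is spanned by $(\sqrt{\kappa(z)},\sqrt{\kappa(\sigma(z))})$. I would also check directly that $M_O$ applied to the claimed kernel vector gives $0$, as a sanity check.

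Finally, the consequence for the normal directions follows from \eqref{eq:normals}. Evaluating at $z$ and at $\sigma(z)$ gives
\[
\bigl([\mathbf n_\nu]_z,[\mathbf n_\nu]_{\sigma(z)}\bigr)^{\!\top}
=M_O\bigl(A_\nu(z),A_\nu(\sigma(z))\bigr)^{\!\top},
\]
because $\sigma(\sigma(z))=z$ and the entries of $M_O$ are exactly the coefficients appearing in \eqref{eq:normals}. The orbit restriction of each $\mathbf n_\nu$ therefore lies in $\operatorname{im}M_O=\mathbb R\,\mathbf d_O$.

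There is no real obstacle here. The only care needed is to match the indexing: row $\sigma(z)$ of \eqref{eq:normals} must use $\kappa(\sigma(z))$ and $A_\nu(z)$, and the positive square roots must be chosen so that $\sqrt K$ factors as stated, which is legitimate since all weights are positive by \textup{(S3)}.
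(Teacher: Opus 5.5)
Your proof is correct and follows the paper's route. You use the same determinant computation via (S4) and the identity $1-C^\ast=\sqrt K\,C^\ast$, the same positive-diagonal argument for rank exactly one, and the same identification of the orbit restriction of $\mathbf n_\nu$ as $M_O\bigl(A_\nu(z),A_\nu(\sigma(z))\bigr)^{\top}$. Your outer-product factorisation $M_O=C^\ast\,\mathbf d_O\bigl(\sqrt{\kappa(\sigma(z))},-\sqrt{\kappa(z)}\bigr)$ is a tidy repackaging of the paper's column-and-kernel computation rather than a different argument; as a bonus, it shows at once why $\operatorname{im}M_O\neq(\ker M_O)^{\perp}$, which is the point of the paper's warning.
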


\begin{proof}
On the orbit $O$, \eqref{eq:normals} reads
\[
\begin{pmatrix}[\mathbf n_\nu]_z\\[2pt][\mathbf n_\nu]_{\sigma(z)}\end{pmatrix}
= M_O\begin{pmatrix}A_\nu(z)\\[2pt]A_\nu(\sigma(z))\end{pmatrix}.
\]
Its determinant is
$\det M_O=(1-C^\ast)^2-C^{\ast2}\kappa(z)\kappa(\sigma(z))
=(1-C^\ast)^2-C^{\ast2}K$ by (S4). With $C^\ast=1/(1+\sqrt K)$,
\[
(1-C^\ast)^2=\frac{K}{(1+\sqrt K)^2},\qquad
C^{\ast2}K=\frac{K}{(1+\sqrt K)^2},
\]
so $\det M_O=0$. The rank is exactly one and not zero, since the
diagonal entry $1-C^\ast=\sqrt K/(1+\sqrt K)$ is strictly positive.

For the image, note first the identity $1-C^\ast=\sqrt K\,C^\ast$,
and that (S4) gives $\sqrt K=\sqrt{\kappa(z)}\sqrt{\kappa(\sigma(z))}$.
The first column of $M_O$ is therefore
\[
\bigl(1-C^\ast,\,-C^\ast\kappa(\sigma(z))\bigr)
=C^\ast\bigl(\sqrt K,\,-\kappa(\sigma(z))\bigr)
=C^\ast\sqrt{\kappa(\sigma(z))}\;\mathbf d_O ,
\]
a non-zero multiple of $\mathbf d_O$; since the rank is one, the
image is spanned by it. For the kernel, $M_O(x,y)^{\!\top}=0$ reads
$(1-C^\ast)x=C^\ast\kappa(z)\,y$, that is
$x/y=\kappa(z)/\sqrt K=\sqrt{\kappa(z)/\kappa(\sigma(z))}$, which is
the stated line.

A warning is in order, since the two lines are easy to conflate.
The image is spanned by the \emph{columns}, as computed above; it is
\emph{not} the orthogonal complement of the kernel. That identity
holds for symmetric matrices, and $M_O$ is symmetric only when
$\kappa(z)=\kappa(\sigma(z))$. Here
\[
\ker M_O=\mathbb R\bigl(\sqrt{\kappa(z)},\sqrt{\kappa(\sigma(z))}\bigr),
\qquad
(\ker M_O)^{\perp}
=\mathbb R\bigl(\sqrt{\kappa(\sigma(z))},-\sqrt{\kappa(z)}\bigr)
\neq\operatorname{im}M_O ,
\]
the two differing by the coordinate swap of the orbit plane; it is
$(\operatorname{im}M_O)^{\perp}$, not $\ker M_O$, that carries the
critical ratio of Theorem~\ref{thm:paper3}\textup{(i)}. See
Remark~\ref{rem:cstar}. The final assertion is that the left-hand column
above lies in $\operatorname{im}M_O$.
\end{proof}

\begin{remark}
The value $C^\ast=1/(1+\sqrt K)$ is precisely the one making
$\det M_O=0$: it is the unique root of $(1-C)^2=C^2K$ in $(0,1)$.
The scalar that Paper~III singled out by a dynamical requirement
(reset neutrality) is the same scalar that here produces an
algebraic rank collapse. We return to this in
Remark~\ref{rem:cstar}.
\end{remark}

\subsection{Finite reduction}
\label{subsec:reduction}

\begin{theorem}[Finite reduction]
\label{thm:fsr}
Assume \textup{(S2)--(S4)}, and let $\mathbf d_O$ be the orbit
directors of Lemma~\ref{lem:orbit-collapse}. Then:
\begin{enumerate}
\item[\textup{(i)}] \emph{Orbitwise decomposition.} Every normal
direction decomposes as
\[
\mathbf n_\nu=\sum_{O} w_\nu^{(O)}\,\mathbf d_O,
\qquad
w^{(O)}_\nu=\frac{[\mathbf n_\nu]_{z_O}}{\sqrt{\kappa(z_O)}},
\]
each $\mathbf d_O$ being supported on its own orbit. In particular
every $\mathbf n_\nu$ vanishes at every neutral site.
\item[\textup{(ii)}] \emph{Activation and independence.} The
coefficient matrix $\bigl(w^{(O)}_\nu\bigr)_{\nu,O}$ has full column
rank $n_{\mathrm{pair}}$; in particular no orbit is inactive.
\item[\textup{(iii)}] \emph{Finite reduction.}
$V=\bigoplus_{O}\mathbb R\,\mathbf d_O$, so that
$r=\dim V=n_{\mathrm{pair}}$, and
\begin{equation}
\psi(\gamma)=\sum_{O}\varphi_O(\gamma)\,\mathbf d_O,
\qquad
\varphi_O(\gamma):=\sum_\nu f_\nu(\gamma)\,w^{(O)}_\nu .
\label{eq:orbit-response}
\end{equation}
\end{enumerate}
Each orbit of $\sigma$ therefore contributes exactly one fixed
direction to $V$, neutral sites contribute none, and all
$\gamma$-dependence is carried by the orbit response scalars
$\varphi_O$.
\end{theorem}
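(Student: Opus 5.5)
The plan is to read all three parts off Lemma~\ref{lem:span} and Lemma~\ref{lem:orbit-collapse}. The only fresh input is an explicit formula for the coefficients $w^{(O)}_\nu$, which reduces the rank statement in (ii) to the full column rank in \textup{(S2)}.

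\emph{Step 1: part (i).} Fix an orbit $O=\{z_O,\sigma(z_O)\}$.
\begin{itemize}
\item By Lemma~\ref{lem:orbit-collapse}, the restriction of $\mathbf n_\nu$ to $O$ equals $c\,\mathbf d_O$ for some scalar $c$.
\item The $z_O$-component of $\mathbf d_O$ is $\sqrt{\kappa(z_O)}>0$. Reading off that component gives $c=[\mathbf n_\nu]_{z_O}/\sqrt{\kappa(z_O)}=w^{(O)}_\nu$.
\item At a neutral site $z_0$ we have $\sigma(z_0)=z_0$ and $\kappa(z_0)=\sqrt K$. Then \eqref{eq:normals} gives $[\mathbf n_\nu]_{z_0}=A_\nu(z_0)\bigl(1-C^\ast-C^\ast\sqrt K\bigr)=0$, using the identity $1-C^\ast=\sqrt K\,C^\ast$ from the proof of Lemma~\ref{lem:orbit-collapse}.
\end{itemize}
Since the orbits and neutral sites partition the reset sites, and each $\mathbf d_O$ is supported on its own orbit, summing over orbits gives the decomposition in (i).

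\emph{Step 2: part (ii), which I expect to be the only step with real content.} I would first make $w^{(O)}_\nu$ explicit. Write $z=z_O$. Using $1-C^\ast=C^\ast\sqrt{\kappa(z)}\sqrt{\kappa(\sigma(z))}$, which is \textup{(S4)}, one gets
\[
w^{(O)}_\nu=C^\ast\Bigl(\sqrt{\kappa(\sigma(z_O))}\,A_\nu(z_O)-\sqrt{\kappa(z_O)}\,A_\nu(\sigma(z_O))\Bigr).
\]
So the column $w^{(O)}\in\mathbb R^N$ equals
\[
C^\ast\bigl(\sqrt{\kappa(\sigma(z_O))}\,\mathbf A(z_O)-\sqrt{\kappa(z_O)}\,\mathbf A(\sigma(z_O))\bigr).
\]
Now suppose $\sum_O c_O\,w^{(O)}=0$. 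This is a vanishing linear combination of the site vectors $\mathbf A(z)$, in which each site occurs at most once, because distinct orbits are disjoint. The coefficient of $\mathbf A(z_O)$ is $C^\ast c_O\sqrt{\kappa(\sigma(z_O))}$. The full column rank in \textup{(S2)} forces this to vanish, and since $C^\ast>0$ and $\kappa>0$ we get $c_O=0$. Hence $(w^{(O)}_\nu)$ has rank $n_{\mathrm{pair}}$, and in particular every column is non-zero, so no orbit is inactive.

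\emph{Step 3: part (iii).}
\begin{itemize}
\item By Lemma~\ref{lem:span} and part (i), $V=\operatorname{span}\{\mathbf n_\nu\}=\{\sum_O(Wa)_O\,\mathbf d_O : a\in\mathbb R^N\}$, where $W$ is the $n_{\mathrm{pair}}\times N$ matrix with entries $W_{O\nu}=w^{(O)}_\nu$.
\item By part (ii), $W$ has rank $n_{\mathrm{pair}}$, so $Wa$ ranges over all of $\mathbb R^{n_{\mathrm{pair}}}$ and $V=\operatorname{span}\{\mathbf d_O\}$.
\item The $\mathbf d_O$ are non-zero with pairwise disjoint supports, so they are independent and the sum is direct. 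This gives $r=n_{\mathrm{pair}}$.
\item Formula \eqref{eq:orbit-response} follows by substituting (i) into \eqref{eq:decomp} and exchanging the finite sums over $\nu$ and $O$.
\end{itemize}

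The main obstacle is Step 2: it needs the algebraic identity that turns each column $w^{(O)}$ into a combination of exactly the two site vectors of its orbit with non-zero coefficients. Once that is written, independence is immediate from \textup{(S2)}.
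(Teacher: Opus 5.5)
Your proposal is correct and follows the paper's proof essentially step for step: (i) comes from the orbit collapse together with the vanishing bracket $(1-C^\ast)-C^\ast\sqrt K=0$ at neutral sites; (ii) rewrites $\sum_O c_O w^{(O)}=0$ as a vanishing combination of distinct site vectors $\mathbf A(z)$ and invokes the full column rank in \textup{(S2)}; and (iii) uses the surjectivity of the coefficient matrix onto $\mathbb R^{n_{\mathrm{pair}}}$ together with the independence of the disjointly supported directors. The only difference is cosmetic: you first simplify $w^{(O)}_\nu$ using $1-C^\ast=C^\ast\sqrt{\kappa(z_O)\kappa(\sigma(z_O))}$, so the coefficient you read off is $C^\ast c_O\sqrt{\kappa(\sigma(z_O))}$, which is equal to the paper's $c_O(1-C^\ast)/\sqrt{\kappa(z_O)}$.
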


\begin{proof}
\textup{(i)} By Lemma~\ref{lem:orbit-collapse} the restriction of
$\mathbf n_\nu$ to an orbit $O$ is a multiple of $\mathbf d_O$;
write it $w^{(O)}_\nu\mathbf d_O$. Since
$[\mathbf d_O]_{z_O}=\sqrt{\kappa(z_O)}\neq0$, the coefficient is
recovered from the representative site as
$w^{(O)}_\nu=[\mathbf n_\nu]_{z_O}/\sqrt{\kappa(z_O)}$. At a neutral
site $z_0$ one has $\sigma(z_0)=z_0$, so (S4) reads
$\kappa(z_0)^2=K$ and the positivity of $\kappa$ in (S3) selects the
root $\kappa(z_0)=+\sqrt K$; then \eqref{eq:normals} gives
\[
[\mathbf n_\nu]_{z_0}=A_\nu(z_0)\bigl[(1-C^\ast)-C^\ast\sqrt K\bigr]=0,
\]
again because $C^\ast=1/(1+\sqrt K)$. As the orbits are pairwise
disjoint and their union is the complement of the neutral sites,
$\mathbf n_\nu=\sum_O w^{(O)}_\nu\mathbf d_O$.

\textup{(ii)} Suppose $\sum_O c_O\,w^{(O)}_\nu=0$ for every $\nu$.
Substituting the expression for $w^{(O)}_\nu$ and
\eqref{eq:normals}, and reading the resulting identity as one in
$\mathbb R^N$ --- that is, collecting the $N$ scalar identities
indexed by $\nu$ into a single vector identity among the site
vectors $\mathbf A(z)$ of \textup{(S2)} --- we obtain
\begin{equation}
\sum_{O}\frac{c_O}{\sqrt{\kappa(z_O)}}
\Bigl[(1-C^\ast)\,\mathbf A(z_O)
-C^\ast\kappa(z_O)\,\mathbf A(\sigma(z_O))\Bigr]=\mathbf 0 .
\label{eq:site-combination}
\end{equation}
This is a vanishing linear combination of the $2n_{\mathrm{pair}}$
site vectors $\{\mathbf A(z_O),\mathbf A(\sigma(z_O))\}_O$, which
are pairwise distinct members of the family
$\{\mathbf A(z_i)\}_{i=1}^m$, the orbits being disjoint. That family
is linearly independent by \textup{(S2)}, and so is any subfamily,
so every coefficient in \eqref{eq:site-combination} vanishes; the
coefficient of $\mathbf A(z_O)$ is $c_O(1-C^\ast)/\sqrt{\kappa(z_O)}$,
and $1-C^\ast=\sqrt K/(1+\sqrt K)\neq0$, so $c_O=0$ for each $O$. The
$n_{\mathrm{pair}}$ columns of $\bigl(w^{(O)}_\nu\bigr)$ are thus
independent; in particular none of them is zero, that is, no orbit
is inactive.

The same conclusion can be read off directly, and is worth
recording. An orbit is inactive precisely when
$[\mathbf n_\nu]_{z}=[\mathbf n_\nu]_{\sigma(z)}=0$ for all $\nu$,
which by Lemma~\ref{lem:orbit-collapse} means that
$(A_\nu(z),A_\nu(\sigma(z)))\in\ker M_O$ for every $\nu$; that is,
the two site vectors $\mathbf A(z)$ and $\mathbf A(\sigma(z))$ would
be proportional, with ratio $\rho=\sqrt{\kappa(z)/\kappa(\sigma(z))}$.
Two proportional members of a linearly independent family being
impossible, \textup{(S2)} excludes this.

\textup{(iii)} The directors $\mathbf d_O$ are non-zero with
pairwise disjoint supports, hence linearly independent, and the map
$T:(c_O)_O\mapsto\sum_O c_O\mathbf d_O$ is injective. By
\textup{(i)}, $\mathbf n_\nu=T(w^{(\cdot)}_\nu)$, so
$\operatorname{span}\{\mathbf n_\nu\}_\nu$ is the image under $T$ of
the row space of $\bigl(w^{(O)}_\nu\bigr)$; by \textup{(ii)} that
row space is all of $\mathbb R^{n_{\mathrm{pair}}}$. Hence
$\operatorname{span}\{\mathbf n_\nu\}_\nu=\bigoplus_O\mathbb R\,\mathbf d_O$,
of dimension $n_{\mathrm{pair}}$, and this equals $V$ by
Lemma~\ref{lem:span}. Substituting \textup{(i)} into
\eqref{eq:decomp} and collecting the spectral scalars orbitwise
gives \eqref{eq:orbit-response}.
\end{proof}

\begin{remark}[The three roles of $C^\ast$]
\label{rem:cstar}
The critical value $C^\ast=1/(1+\sqrt K)$ appears in three distinct
places: as the value eliminating the $\gamma$-dependence of the ruin
functional (Paper~III, a dynamical condition); as the value making the
orbit matrix $M_O$ singular (Lemma~\ref{lem:orbit-collapse}, an algebraic
collapse); and as the value fixing the two distinguished lines of the
orbit plane, $\operatorname{im}M_O=\mathbb R\,\mathbf d_O$ and
$\ker M_O$. The latter is the \emph{inactivity direction}: an orbit
would fail to contribute exactly when its two site vectors
$\mathbf A(z)$, $\mathbf A(\sigma(z))$ sat in the ratio $\rho=C^\ast\kappa(z)/(1-C^\ast)=\sqrt{\kappa(z)/\kappa(\sigma(z))}$,
which is the reciprocal of the ratio
$\pi^\ast_z/\pi^\ast_{\sigma(z)}=\sqrt{\kappa(\sigma(z))/\kappa(z)}$
characterising the invariant distributions
(Theorem~\ref{thm:paper3}\textup{(i)}); the two lines are exchanged
by the coordinate swap of the orbit plane. These are not
coincidences: the collapse is the algebraic shadow of the dynamical
neutrality, both governed by the compatibility (S4) between the
twisted duality and the critical value.
\end{remark}

\begin{remark}[Positivity, model-specific]
\label{rem:positivity}
The scalars $f_\nu(\gamma)$ are positive on $(0,1)$ for the biased
random walk, where $f_\nu(\gamma)=(1-\gamma)/(1-\lambda_\nu(1-\gamma))$
with $|\lambda_\nu|<1$; this drives the orientation theorem of
Section~\ref{sec:sign-theorem}. Positivity is not needed for the
reduction itself, which uses only (S2)--(S4).
\end{remark}

\begin{remark}[Hypotheses]
\label{rem:hypotheses}
The reduction uses each structural condition once: (S2) gives the
expansion and, through the independence of the site vectors, the
activation of every orbit;
(S3) builds the normal directions; (S4) makes each orbit matrix
singular. It does not use the resetting mechanism, nor any spectral
representation of a generator. A spectral realisation for the random
walk, connecting the collapse to the resolvent structure of Paper~IV,
is given in Appendix~\ref{app:spectral}.
\end{remark}

\section{Rigidity of the separatrix}
\label{sec:geometry}

The two preceding threads now meet. Section~\ref{sec:finite-reduction}
established, by purely algebraic means, that the reset landscape is
governed by a finite family of directions, one per orbit of the
involution; Section~\ref{sec:fisher-rao} built the universal stage
on which reset distributions live. This section places the algebra
on the stage. The outcome is a complete geometric description of
the separatrix --- its dimension, an explicit linear model,
intrinsic coordinates, the isometric reflection fixing it, the
nearest-point projection and distance, and the canonical quadratic
functional detecting it --- and its
character is rigidity: the direction of every orbit is forced. The
only remaining freedom, the distribution of mass among orbits and
neutral sites, is not a failure of rigidity but the intrinsic
coordinate system of the separatrix itself
(Corollary~\ref{cor:intrinsic}). None of these linear
relations is created by the metric; they are entirely encoded in
the singular orbit matrix of Lemma~\ref{lem:orbit-collapse}. The
metric merely reveals their intrinsic geometric meaning. Nor is
the rigidity an ornament: the great subsphere identified below is
the common zero set of the transverse response functionals, and is
the geometric input for the orientation theorem of
Section~\ref{sec:sign-theorem}. Its separating character, however,
is a codimension-one phenomenon: when $r=1$ it is a hypersurface and
does bound the two regions of definite sign, whereas for $r\ge2$ it
has codimension $r$ and does not separate the sphere at all --- it
is then the common codimension-$r$ zero set of the transverse
coordinates. This is the geometric face of the dichotomy of
Section~\ref{sec:sign-theorem}: two zones when $r=1$, a response
field free to rotate when $r\ge2$. The name \emph{separatrix} is
retained throughout in its established sense of the neutral set,
and no topological separation property is claimed beyond $r=1$.

\subsection{One matrix, two shadows}
\label{subsec:principle}

Recall from Lemma~\ref{lem:orbit-collapse} the orbit matrix
\[
M_O=\begin{pmatrix}1-C^\ast & -C^\ast\kappa(z)\\[2pt]
-C^\ast\kappa(\sigma(z)) & 1-C^\ast\end{pmatrix},
\qquad \det M_O=0\ \text{ at }\ C^\ast=\frac1{1+\sqrt K},
\]
whose rank-one degeneration is the fundamental phenomenon of the
paper. It casts two shadows. The \emph{algebraic} shadow was the
subject of Section~\ref{sec:finite-reduction}: on each orbit, all
normal directions collapse onto the single director spanning
$\operatorname{im}M_O$. The \emph{geometric} shadow is
the subject of this section: on the sphere, the quadric cut out by
each orbit condition degenerates into a pair of hyperplanes, of
which the octant retains exactly one. One object, two readings.

We first record the two elementary properties of the director that
drive everything.

\begin{lemma}[Orthogonality of the orbit director]
\label{lem:director}
For each orbit $O=\{z,\sigma(z)\}$, write
$\mathbf v_O=(\sqrt{\kappa(\sigma(z))},\sqrt{\kappa(z)})$ for the
\emph{critical vector}, whose direction realises the critical
ratio of Theorem~\ref{thm:paper3}\textup{(i)}. Then the director
of Lemma~\ref{lem:orbit-collapse}, which spans
$\operatorname{im}M_O$, is
\[
\mathbf d_O:=\bigl(\sqrt{\kappa(z)},\ -\sqrt{\kappa(\sigma(z))}\bigr)
\;=\;R_{-\pi/2}\,\mathbf v_O,
\]
where $R_{-\pi/2}\in SO(2)$ is the rotation by $-\pi/2$ of the
Euclidean orbit plane. In particular $\mathbf d_O$ and
$\mathbf v_O$ are orthogonal \emph{and of equal Euclidean norm}
$\sqrt{\kappa(z)+\kappa(\sigma(z))}$: the director is not a new
object --- it is the critical vector viewed from the orthogonal
direction, with no deformation whatsoever. Consequently:
\begin{enumerate}
\item[\textup{(i)}] the two components of $\mathbf d_O$ have
opposite signs;
\item[\textup{(ii)}] $\langle\pi|_O,\mathbf d_O\rangle=0$ for every
$\pi$ satisfying the critical ratio.
\end{enumerate}
\end{lemma}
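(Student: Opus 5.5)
The plan is a direct computation in the Euclidean orbit plane with coordinates ordered as $(z,\sigma(z))$, following the convention fixed in Section~\ref{sec:finite-reduction}. I will first use the rotation convention $R_{-\pi/2}(a,b)=(b,-a)$, which is the clockwise quarter turn with matrix $\begin{pmatrix}0&1\\-1&0\end{pmatrix}$. Applying it to $\mathbf v_O=(\sqrt{\kappa(\sigma(z))},\sqrt{\kappa(z)})$ gives $(\sqrt{\kappa(z)},-\sqrt{\kappa(\sigma(z))})$, which is exactly the director $\mathbf d_O$ of Lemma~\ref{lem:orbit-collapse}. That lemma already identified this vector as a spanning vector of $\operatorname{im}M_O$, so the identification with the image requires no new argument.

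Because $R_{-\pi/2}\in SO(2)$ is an isometry rotating by a right angle, orthogonality and equality of norms follow at once. As a check I will also write both out directly. The inner product is $\langle\mathbf d_O,\mathbf v_O\rangle=\sqrt{\kappa(z)\kappa(\sigma(z))}-\sqrt{\kappa(\sigma(z))\kappa(z)}=0$. Both squared norms equal $\kappa(z)+\kappa(\sigma(z))$.

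For (i), the weights $\kappa$ are strictly positive by (S3), so $\sqrt{\kappa(z)}>0$ and $-\sqrt{\kappa(\sigma(z))}<0$. Hence the two components of $\mathbf d_O$ have opposite signs.

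For (ii), I will interpret ``$\pi$ satisfies the critical ratio'' as requiring $\pi|_O$ to be a scalar multiple of $\mathbf v_O$. This includes the case $\pi|_O=0$, where the claim is trivial. When $\pi|_O\neq0$, the ratio of Theorem~\ref{thm:paper3}(i) is $\pi_z/\pi_{\sigma(z)}=\sqrt{\kappa(\sigma(z))/\kappa(z)}$, which is exactly the statement that $\pi|_O\in\mathbb R\,\mathbf v_O$. Then $\langle\pi|_O,\mathbf d_O\rangle=t\langle\mathbf v_O,\mathbf d_O\rangle=0$ by the orthogonality already shown.

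No step is a real obstacle. The only care needed is bookkeeping. First, the direction of rotation must be consistent with the coordinate ordering $(z_O,\sigma(z_O))$. Second, $\mathbf v_O$ must not be confused with the kernel vector of $M_O$, which has its components swapped; this is the warning already made after Lemma~\ref{lem:orbit-collapse}. The fact that $\mathbf d_O\perp\mathbf v_O$, rather than $\mathbf d_O\perp\ker M_O$, is precisely what that warning anticipates.
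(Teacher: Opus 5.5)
Your proposal is correct and follows essentially the same route as the paper. You take the identification $\operatorname{im}M_O=\mathbb R\,\mathbf d_O$ from Lemma~\ref{lem:orbit-collapse}, read $\mathbf d_O$ as the image of $\mathbf v_O$ under $(a,b)\mapsto(b,-a)$ to get orthogonality and equal norms, and obtain (ii) because any $\pi$ with the critical ratio is a (possibly zero) multiple of $\mathbf v_O$ on $O$. Your explicit derivation of (i) from the positivity of $\kappa$ in \textup{(S3)} just spells out a step the paper leaves implicit.
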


\begin{proof}
That $\operatorname{im}M_O=\mathbb R\,\mathbf d_O$ is
Lemma~\ref{lem:orbit-collapse}; only the geometric reading remains.
The map $(a,b)\mapsto(b,-a)$ is the rotation by
$-\pi/2$, and a rotation preserves norms and is orthogonal to its
argument: orthogonality and equality of norms follow, and any
$\pi$ with the critical ratio is a \emph{nonnegative} multiple of
$\mathbf v_O$ on $O$ --- the zero multiple corresponding to an orbit
of zero mass, which is why the statement survives on the boundary
strata used in Proposition~\ref{prop:sigma-exact} and
Corollary~\ref{cor:cone}. Note that \textup{(S4)} is doing double
duty: it makes $M_O$ singular (Lemma~\ref{lem:orbit-collapse}),
\emph{and} it rotates the director exactly orthogonal to the
critical vector.
\end{proof}

In retrospect, Lemma~\ref{lem:director} identifies two objects of
Paper~III that had appeared unrelated: the direction defining the
invariant distributions and the direction governing the response
are the same vector, viewed at right angles.

\subsection{Exact characterisation and linearisation}
\label{subsec:linearisation}

The global structure of the separatrix was identified in
Paper~III (Proposition~5.1): it is the affine subset of the
simplex cut out by one ratio constraint per orbit. We now
re-derive this characterisation from the orbit collapse ---
exhibiting its mechanism --- and recast it in the form adapted to
the sphere.

\begin{proposition}[Exact linearity of the separatrix; cf.\
Paper~III, Prop.~5.1]
\label{prop:sigma-exact}
Under \textup{(S1)--(S4)}, a distribution lies in the separatrix
if and only if it annihilates every normal direction:
\[
\pi\in\Sigma\iff\langle\pi,\mathbf n_\nu\rangle=0\ \ \forall\nu
\iff\langle\pi|_O,\mathbf d_O\rangle=0\ \ \text{for every orbit }O,
\]
and explicitly, on the whole simplex,
\[
\Sigma
=\Bigl\{\pi\in\Delta_{m-1}:\
\sqrt{\kappa(z)}\,\pi_z=\sqrt{\kappa(\sigma(z))}\,\pi_{\sigma(z)}\
\text{for every orbit}\Bigr\},
\]
with the weights of neutral sites unconstrained. Note that the
linear form of the condition already covers the boundary strata: if
one of $\pi_z,\pi_{\sigma(z)}$ vanishes so does the other, so no
orbit can carry mass at a single site --- which is how Paper~III's
ratio condition, stated only on orbits of positive mass, is
subsumed here. Equivalently:
$\Sigma$ is the intersection of the simplex with a linear subspace
of $\mathbb R^m$ of codimension $r=n_{\mathrm{pair}}$; the
interior part $\Sigma^\circ$, which the spherical
picture below uses, is the corresponding relatively open subset. We
write
\[
\Sigma^\circ:=\Sigma\cap\Delta^\circ_{m-1}
\]
throughout, and keep the distinction firmly: the \emph{algebraic}
statements of this section hold on $\Sigma\subseteq\Delta_{m-1}$,
closed simplex included, while every \emph{Riemannian} statement ---
distances, geodesics, isometries, Hessians --- is made on
$\Sigma^\circ\subseteq\Delta^\circ_{m-1}$, where the Fisher--Rao
metric is defined.
This recovers, through the orbit collapse, the affine characterisation
of the separatrix obtained in Paper~III, and prepares it for the
spherical representation.
\end{proposition}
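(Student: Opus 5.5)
The plan is to pass from the defining property of $\Sigma$ (constancy of $C(\pi,\cdot)$ in $\gamma$) to the vanishing of the pairings $\langle\pi,\mathbf n_\nu\rangle$, using Theorem~\ref{thm:paper3} to pin the constant value, and then to read off the orbitwise conditions from Theorem~\ref{thm:fsr}.

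\emph{First equivalence.} Suppose $\pi\in\Sigma$. By Theorem~\ref{thm:paper3}, $C(\pi,\gamma)=C^\ast$ for all $\gamma$, so \eqref{eq:C-minus-Cstar} gives $\langle\pi,\psi(\gamma)\rangle=0$ for every $\gamma$. Expanding by \eqref{eq:decomp} yields $\sum_\nu f_\nu(\gamma)\langle\pi,\mathbf n_\nu\rangle\equiv0$. The linear independence of the $f_\nu$ in \textup{(S2)} then forces $\langle\pi,\mathbf n_\nu\rangle=0$ for every $\nu$. Conversely, if all these pairings vanish, then $\langle\pi,\psi(\gamma)\rangle\equiv0$. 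By \eqref{eq:C-minus-Cstar} and positivity of the denominator, $C(\pi,\gamma)\equiv C^\ast$, so $\pi\in\Sigma$.

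The one point needing care is that Theorem~\ref{thm:paper3} is invoked on the closed simplex, boundary points included. If one prefers not to rely on it there, a direct route is available. Write $\langle\pi,u-cs\rangle\equiv0$ for the unknown constant $c$. Expanding as in the proof of Proposition~\ref{prop:sharp}, full column rank makes each bracket $(1-c)\pi_z-c\,\kappa(\sigma(z))\pi_{\sigma(z)}$ vanish. Since $\pi\neq0$, some orbit or neutral site carries mass, and the product argument then gives $(1-c)^2=c^2K$ or $c=1/(1+\sqrt K)$. The branch $c=1/(1-\sqrt K)$ must be excluded. At a neutral site the bracket forces $c\,\sqrt K=1-c$ whenever the site carries mass. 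On a massive orbit, the two brackets with $\kappa>0$ force $(1-c)$ and $c$ to have the same sign, so $c\in(0,1)$, and this selects $C^\ast$. I expect this sign bookkeeping to be the only genuine obstacle, and I would rely on Theorem~\ref{thm:paper3} as stated.

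\emph{Second equivalence.} By Theorem~\ref{thm:fsr}(i), $\mathbf n_\nu=\sum_O w^{(O)}_\nu\mathbf d_O$, so $\langle\pi,\mathbf n_\nu\rangle=\sum_O w^{(O)}_\nu\langle\pi|_O,\mathbf d_O\rangle$. Vanishing for all $\nu$ means the vector $(\langle\pi|_O,\mathbf d_O\rangle)_O$ lies in the kernel of the transpose action of $(w^{(O)}_\nu)$. Full column rank (Theorem~\ref{thm:fsr}(ii)) makes that kernel trivial. Hence the condition is equivalent to $\langle\pi|_O,\mathbf d_O\rangle=0$ on every orbit. Writing out $\mathbf d_O=(\sqrt{\kappa(z)},-\sqrt{\kappa(\sigma(z))})$ gives exactly $\sqrt{\kappa(z)}\,\pi_z=\sqrt{\kappa(\sigma(z))}\,\pi_{\sigma(z)}$.

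Neutral sites do not appear in any condition, since every $\mathbf n_\nu$ vanishes there by Theorem~\ref{thm:fsr}(i). The boundary remark is immediate: the weights $\kappa$ are positive, so one of $\pi_z,\pi_{\sigma(z)}$ vanishes if and only if the other does. Finally, the constraints are $n_{\mathrm{pair}}$ linear forms $\pi\mapsto\langle\pi|_O,\mathbf d_O\rangle$ with disjoint nonzero supports. They are therefore independent, so $\Sigma$ is the simplex cut by a linear subspace of codimension $n_{\mathrm{pair}}=r$. Intersecting with $\Delta^\circ_{m-1}$ gives the relatively open $\Sigma^\circ$.
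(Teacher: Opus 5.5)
Your proposal is correct and follows the paper's proof essentially step for step: Theorem~\ref{thm:paper3} fixes the constant at $C^\ast$ on the closed simplex, with the positivity clause of \textup{(S2)} covering boundary points. Linear independence of the $f_\nu$ then gives $\langle\pi,\mathbf n_\nu\rangle=0$, the full column rank of $(w^{(O)}_\nu)$ from Theorem~\ref{thm:fsr}\textup{(ii)} turns this into $\langle\pi|_O,\mathbf d_O\rangle=0$ on every orbit, and disjoint supports give codimension $n_{\mathrm{pair}}$.

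One wording slip: in the paper's $(\nu,O)$ indexing the conditions read $\sum_O w^{(O)}_\nu\langle\pi|_O,\mathbf d_O\rangle=0$, so full column rank makes the kernel of $(w^{(O)}_\nu)$ itself trivial, not the kernel of its transpose. Your optional direct route for boundary points, which excludes $c=1/(1-\sqrt K)$ by the sign argument on a massive orbit or neutral site, is also sound.
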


\begin{proof}
By Theorem~\ref{thm:paper3}, any $\pi$ with $C(\pi,\cdot)$ constant
has constant value $C^\ast$; the separatrix is defined on the
closed simplex, and Theorem~\ref{thm:paper3} covers it as stated.
The positivity clause of \textup{(S2)} places
$\langle\pi,s(\cdot;\gamma)\rangle>0$ on all of
$\Delta_{m-1}\times(0,1)$, boundary included --- assumed there, not
derived from the expansion --- so that no step below requires $\pi$
to be interior. Hence
\[
C(\pi,\gamma)=C^\ast\ \forall\gamma
\iff
\langle\pi,u(\cdot;\gamma)-C^\ast s(\cdot;\gamma)\rangle=0\
\forall\gamma
\iff
\sum_\nu f_\nu(\gamma)\,\langle\pi,\mathbf n_\nu\rangle=0\
\forall\gamma,
\]
using the decomposition \eqref{eq:normals}. Linear independence of
the $f_\nu$ \textup{(S2)} makes this equivalent to
$\langle\pi,\mathbf n_\nu\rangle=0$ for all $\nu$. By the orbit
collapse (Lemma~\ref{lem:orbit-collapse}) and the disjointness of
orbit supports, $\mathbf n_\nu=\sum_O w^{(O)}_\nu\mathbf d_O$ with
the coefficient matrix $(w^{(O)}_\nu)$ of full column rank
$n_{\mathrm{pair}}$. Writing $t_O:=\langle\pi|_O,\mathbf d_O\rangle$,
the conditions read $\sum_O w^{(O)}_\nu t_O=0$ for every $\nu$; it is
the injectivity of $(w^{(O)}_\nu)$ that forces $t_O=0$ for every
orbit. The point deserves emphasis: the orbit collapse alone gives
only the inclusion $V\subseteq\operatorname{span}\{\mathbf d_O\}_O$,
and the equality of the annihilator of the $\mathbf n_\nu$ with the
orbitwise kernel is precisely where the full-column-rank
\emph{activation} of Theorem~\ref{thm:fsr}\textup{(ii)} enters, not
merely the orbitwise collapse. \textup{(}The converse implication
needs only the decomposition.\textup{)} By
Lemma~\ref{lem:director}, $t_O=0$ is the stated ratio condition. Neutral
sites carry no condition since every $\mathbf n_\nu$ vanishes
there (Theorem~\ref{thm:fsr}). The conditions are linear in $\pi$
and independent across orbits (disjoint supports), giving
codimension $n_{\mathrm{pair}}$.
\end{proof}

On the sphere, the exact linearity of
Proposition~\ref{prop:sigma-exact} becomes total geodesy --- but
this requires an argument, because $\Phi$ is quadratic: the image
of an affine set under $\pi\mapsto2\sqrt\pi$ is, generically, a
curved quadric. The mechanism that prevents this is the octant,
through the following observation.

\begin{lemma}[Scalar octant lemma]
\label{lem:scalar}
For $\alpha,\beta>0$,
\[
\bigl\{x\in\mathbb R^2_{\ge0}:\ \alpha x_1^2=\beta x_2^2\bigr\}
=\bigl\{x\in\mathbb R^2_{\ge0}:\
\sqrt\alpha\,x_1=\sqrt\beta\,x_2\bigr\}.
\]
\end{lemma}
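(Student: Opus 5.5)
The plan is to prove the two inclusions by factoring the quadratic condition, using nonnegativity of the coordinates to discard one factor. Write $a:=\sqrt\alpha>0$ and $b:=\sqrt\beta>0$. Then the defining equation $\alpha x_1^2=\beta x_2^2$ is equivalent to
\[
(a x_1-b x_2)(a x_1+b x_2)=0 ,
\]
a difference of squares. This is exactly the \emph{geometric shadow} described in the discussion of the orbit matrix: the quadric degenerates into the pair of lines $a x_1=b x_2$ and $a x_1=-b x_2$.

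For the inclusion $\supseteq$, take any $x\in\mathbb R^2_{\ge0}$ with $a x_1=b x_2$. Squaring both sides gives $\alpha x_1^2=\beta x_2^2$, so $x$ lies in the left-hand set. No sign information is needed here.

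For the inclusion $\subseteq$, take $x\in\mathbb R^2_{\ge0}$ with $\alpha x_1^2=\beta x_2^2$, so that one of the two factors vanishes. If it is the first, we are done.

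If it is the second, then $a x_1+b x_2=0$. Both summands are nonnegative, because $a,b>0$ and $x_1,x_2\ge0$. Hence each summand is zero, which gives $x_1=x_2=0$. The origin satisfies $a x_1=b x_2$ trivially, so it also lies in the right-hand set.

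There is no real obstacle. The only point to state explicitly is that the octant meets the spurious line $a x_1=-b x_2$ only at the origin, and the origin already lies on the retained line. This is the sense in which the octant ``retains exactly one'' hyperplane.

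I would close by noting how the lemma is used. It will be applied with $\alpha=\kappa(z)$, $\beta=\kappa(\sigma(z))$ and $x=(\Phi(\pi)_z,\Phi(\pi)_{\sigma(z)})$. Since $x_i^2=4\pi_i$, the affine orbit condition of Proposition~\ref{prop:sigma-exact} becomes a linear condition on the sphere, namely orthogonality to $(\sqrt{\kappa(z)},-\sqrt{\kappa(\sigma(z))})$. This is the orbit director $\mathbf d_O$ of Lemma~\ref{lem:director}.
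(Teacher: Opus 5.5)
Your proof is correct and is the paper's argument written out. The paper's one line is that two nonnegative numbers with equal squares are equal, applied to $\sqrt\alpha\,x_1$ and $\sqrt\beta\,x_2$. That is exactly your difference-of-squares factorisation, with positivity forcing the spurious factor $\sqrt\alpha\,x_1+\sqrt\beta\,x_2$ to vanish only at the origin.

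Your closing remark, which lies outside the proof, needs a correction. The orbit condition is $\sqrt{\kappa(z)}\,\pi_z=\sqrt{\kappa(\sigma(z))}\,\pi_{\sigma(z)}$. With $\pi=x^2/4$, the lemma is therefore applied with $\alpha=\sqrt{\kappa(z)}$ and $\beta=\sqrt{\kappa(\sigma(z))}$, not with $\alpha=\kappa(z)$ and $\beta=\kappa(\sigma(z))$.

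The resulting spherical condition is $\kappa(z)^{1/4}x_z=\kappa(\sigma(z))^{1/4}x_{\sigma(z)}$. This is orthogonality to $(\kappa(z)^{1/4},-\kappa(\sigma(z))^{1/4})$, not to $\mathbf d_O$. Orthogonality to $\mathbf d_O$ is the condition in the $\pi$-coordinates. Your choice of $\alpha,\beta$ would encode the wrong ratio $\kappa(z)\pi_z=\kappa(\sigma(z))\pi_{\sigma(z)}$.
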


\begin{proof}
Two nonnegative numbers with equal squares are equal.
\end{proof}

\begin{theorem}[Linearisation: the separatrix is a great subsphere]
\label{thm:great-subsphere}
Let $L\subset\mathbb R^m$ be the linear subspace
\[
L=\bigl\{x:\ \kappa(z)^{1/4}x_z=\kappa(\sigma(z))^{1/4}\,x_{\sigma(z)}\
\text{for every orbit}\bigr\},\qquad \dim L=m-r.
\]
Then
\[
\Phi\bigl(\Sigma^\circ\bigr)
=L\cap S^{m-1}_+(2).
\]
In particular, the image of the separatrix is an open piece of a
great subsphere of dimension $m-r-1$; hence $\Sigma^\circ$ is a
totally geodesic submanifold of $(\Delta^\circ_{m-1},
g^{\mathrm{FR}})$, of codimension exactly $r$. \textup{(}This
statement was obtained independently, and by the same route, by
T.~Newton \cite{Newton}.\textup{)} The closed set
$\Sigma$ remains available as the algebraic object of
Proposition~\ref{prop:sigma-exact}; ``submanifold'' is asserted of
$\Sigma^\circ$ only.
\end{theorem}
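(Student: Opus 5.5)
The plan is to transport the explicit description of Proposition~\ref{prop:sigma-exact} through $\Phi$ coordinate by coordinate, using Lemma~\ref{lem:scalar} to undo the squaring. By Proposition~\ref{prop:sigma-exact}, $\pi\in\Sigma^\circ$ if and only if $\pi\in\Delta^\circ_{m-1}$ and $\sqrt{\kappa(z)}\,\pi_z=\sqrt{\kappa(\sigma(z))}\,\pi_{\sigma(z)}$ on every orbit, with no constraint at neutral sites. Setting $x=\Phi(\pi)$, so that $\pi_z=x_z^2/4$, each orbit condition becomes the quadratic relation $\sqrt{\kappa(z)}\,x_z^2=\sqrt{\kappa(\sigma(z))}\,x_{\sigma(z)}^2$. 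The bijection $\Phi:\Delta^\circ_{m-1}\to S^{m-1}_+(2)$ of Proposition~\ref{prop:isometry} then gives
\[
\Phi(\Sigma^\circ)=\bigl\{x\in S^{m-1}_+(2):\ \sqrt{\kappa(z)}\,x_z^2=\sqrt{\kappa(\sigma(z))}\,x_{\sigma(z)}^2\ \text{for every orbit}\bigr\}.
\]

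Next I would apply Lemma~\ref{lem:scalar} to each orbit separately, with $\alpha=\sqrt{\kappa(z)}$ and $\beta=\sqrt{\kappa(\sigma(z))}$, both positive by (S3). On the octant, where $x_z,x_{\sigma(z)}>0$, the quadratic condition is equivalent to the linear condition $\kappa(z)^{1/4}x_z=\kappa(\sigma(z))^{1/4}x_{\sigma(z)}$. This is the point where the octant is used: on the full sphere each orbit condition would be a union of two hyperplanes, and positivity keeps only one of them. Intersecting over all orbits yields $\Phi(\Sigma^\circ)=L\cap S^{m-1}_+(2)$.

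To compute $\dim L$, note that the $n_{\mathrm{pair}}$ defining functionals have pairwise disjoint supports and are nonzero, so they are linearly independent. Hence $\dim L=m-n_{\mathrm{pair}}=m-r$ by Theorem~\ref{thm:fsr}(iii). The intersection $L\cap S^{m-1}(2)$ is then a great subsphere of dimension $m-r-1$, and $L\cap S^{m-1}_+(2)$ is an open subset of it. This subset is nonempty: the vector with $x_z\propto\kappa(z)^{-1/4}$ on each orbit and positive entries at the neutral sites lies in it, which matches $\Sigma^\circ\neq\emptyset$ from Theorem~\ref{thm:paper3}. Openness holds because the octant is open in the sphere.

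For total geodesy I would use a standard fact: a great subsphere is totally geodesic in the round sphere, since it is the fixed-point set of the Euclidean reflection across $L$, which is an isometry of the sphere. An open subset of a totally geodesic submanifold is itself totally geodesic. Pulling back through the Riemannian isometry $\Phi$ of Proposition~\ref{prop:isometry} shows that $\Sigma^\circ$ is a totally geodesic submanifold of $(\Delta^\circ_{m-1},g^{\mathrm{FR}})$. Its codimension is $(m-1)-(m-r-1)=r$.

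I do not expect a real obstacle here. The only delicate point is the sign bookkeeping in the octant step, namely making sure that all coordinates are strictly positive so that Lemma~\ref{lem:scalar} applies without any sign ambiguity.
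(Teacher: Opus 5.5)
Your proposal is correct and follows the paper's proof essentially step for step. You substitute $\pi=x^2/4$ into Proposition~\ref{prop:sigma-exact}, use Lemma~\ref{lem:scalar} on the octant to linearise each orbit quadric, count one independent equation per orbit by disjoint supports, and get total geodesy from the fixed-point set of the reflection across $L$, transported by $\Phi$. The differences are minor: the paper proves the fixed-point-set fact with a two-line uniqueness-of-geodesics argument instead of citing it, and your nonemptiness check is not needed for the set equality itself.
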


\begin{proof}
Substituting $\pi=x^2/4$ into the characterisation of
Proposition~\ref{prop:sigma-exact}, each orbit condition reads
$\sqrt{\kappa(z)}\,x_z^2=\sqrt{\kappa(\sigma(z))}\,x_{\sigma(z)}^2$
with both coefficients positive by \textup{(S3)}. On the
octant,
Lemma~\ref{lem:scalar} replaces each such quadric by the single
linear equation
$\kappa(z)^{1/4}x_z=\kappa(\sigma(z))^{1/4}x_{\sigma(z)}$.
One independent equation per orbit (disjoint supports) gives
$\dim L=m-n_{\mathrm{pair}}=m-r$; neutral coordinates are
unconstrained. Finally, since $L$ is a linear subspace, the
orthogonal reflection of $\mathbb R^m$ across $L$ restricts to an
isometry of the sphere whose fixed-point set is exactly $L\cap
S^{m-1}(2)$; and this fixed-point set is totally geodesic, by a
two-line argument rather than by citation. Indeed, let
$x\in\operatorname{Fix}$ and $v\in T_x\operatorname{Fix}$. The
reflection is linear, so it fixes both $x$ and $v$; being an
isometry of the sphere it carries the geodesic with initial data
$(x,v)$ to the geodesic with the same initial data, which by
uniqueness is that geodesic itself. The geodesic is therefore fixed
pointwise, hence contained in $\operatorname{Fix}$. Total geodesy
transports to the simplex through the isometry $\Phi$
(Proposition~\ref{prop:isometry}). The reflection itself
will be studied in its own right below, as the involution
$\iota$.
\end{proof}

Theorem~\ref{thm:great-subsphere} answers affirmatively a
question raised explicitly in the outlook of Paper~III --- whether
the separatrix is a geodesic submanifold under the Fisher--Rao
metric --- and it does so in the strongest possible form: not
merely geodesic, but a great subsphere.

\begin{remark}[The octant does the work]
\label{rem:octant-work}
This is the promise of Remark~\ref{rem:octant} made good. Each
orbit quadric
$\sqrt{\kappa(z)}x_z^2=\sqrt{\kappa(\sigma(z))}x_{\sigma(z)}^2$ is
the zero set of the indefinite quadratic form
$\sqrt{\kappa(z)}x_z^2-\sqrt{\kappa(\sigma(z))}x_{\sigma(z)}^2$,
which has rank two: nondegenerate on the two-dimensional orbit
plane, and, read on all of $\mathbb R^m$, degenerate with kernel the
remaining coordinates. It factors as
\[
\bigl(\kappa(z)^{1/4}x_z-\kappa(\sigma(z))^{1/4}x_{\sigma(z)}\bigr)
\bigl(\kappa(z)^{1/4}x_z+\kappa(\sigma(z))^{1/4}x_{\sigma(z)}\bigr)=0,
\]
a union of two hyperplanes; the octant retains exactly the
positive sheet. It is worth separating the logical roles, which are
four and distinct.
\begin{itemize}
\item[\textup{(a)}] \emph{What produces the relation.} The
compatibility \textup{(S4)} and the critical value $C^\ast$ generate
the homogeneous two-coordinate condition in the first place
(Lemma~\ref{lem:orbit-collapse}), and the orbit structure of
$\sigma$ is what confines it to two coordinates.
\item[\textup{(b)}] \emph{What linearises it.} Once the condition
reads $\alpha x_z^2=\beta x_{\sigma(z)}^2$ with $\alpha,\beta>0$,
the passage to $\sqrt\alpha\,x_z=\sqrt\beta\,x_{\sigma(z)}$ uses
\emph{only} $x\ge0$ and the positivity of the coefficients
(Lemma~\ref{lem:scalar}). Nothing else enters; in particular the
proof of Theorem~\ref{thm:great-subsphere} nowhere uses the sign
pattern of $\mathbf d_O$.
\item[\textup{(c)}] \emph{What the signs interpret.} The opposite
signs of $\mathbf d_O$ (Lemma~\ref{lem:director}\textup{(i)}) supply
the normal-orientation reading of the orbit equation --- its
orthogonality to the critical vector.
\item[\textup{(d)}] \emph{Where the signs are indispensable.} In
Remark~\ref{rem:mismatch}, they are what places $\mathbf 1$ outside
$V$, and hence what makes the transverse rank equal to $r$ rather
than $r-1$.
\end{itemize}
\end{remark}

\subsection{The separatrix as a cone of square roots}
\label{subsec:cone}

Solving the linear equations of
Theorem~\ref{thm:great-subsphere} orbit by orbit exhibits $L$
explicitly. On the orbit $O=\{z,\sigma(z)\}$, the solutions of
$\kappa(z)^{1/4}x_z=\kappa(\sigma(z))^{1/4}x_{\sigma(z)}$ are the
multiples of the vector
\[
\mathbf e_O\ \text{ with components }\
\bigl(\kappa(\sigma(z))^{1/4},\ \kappa(z)^{1/4}\bigr)
\ \text{ on }O,\ \ 0\ \text{ elsewhere},
\]
so that $L=\operatorname{span}\{\mathbf e_O\}_O\oplus
\operatorname{span}\{\text{neutral coordinate axes}\}$.
Translating back to the simplex yields the explicit global
parametrisation of the separatrix.

\begin{corollary}[The separatrix is the cone of square roots]
\label{cor:cone}
Suppose, as in Remark~\ref{rem:pi-mu}, that the underlying chain
is $(\sigma,\kappa)$-reversible with reversible measure
$\mu\propto\kappa^{-1}$. Then
\[
\Sigma\cap\Delta_{m-1}
=\Bigl\{\pi:\ \pi|_O\propto\sqrt{\mu}\,\bigl|_O\ \text{on each
orbit }O,\ \text{neutral weights free}\Bigr\},
\]
and the per-orbit constants (together with the neutral weights,
modulo normalisation) are global coordinates on $\Sigma$. Note that
the two pictures are of different kinds: in the $\pi$-coordinates
$\Sigma$ is an affine slice of the simplex, not a cone; it is in the
coordinates $x=2\sqrt\pi$ that it becomes a genuine cone, and the
name is to be read there. On the sphere, correspondingly,
\[
\Phi\bigl(\Sigma^\circ\bigr)
=\operatorname{cone}_{+}\bigl\{\mu^{1/4}\big|_O,\ \text{neutral
axes}\bigr\}\cap S^{m-1}_+(2),
\]
the \emph{positive} cone --- positivity and normalisation are part
of the statement, not decoration. The spectral square root
$\mu\mapsto\sqrt\mu$ and the geometric square root $\Phi$ thus
compose, and the separatrix is generated, in spherical coordinates,
by the fourth roots of the reversible measure.
\end{corollary}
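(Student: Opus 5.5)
The plan is to read the statement off Proposition~\ref{prop:sigma-exact} and Theorem~\ref{thm:great-subsphere}, using the reversibility hypothesis only to rewrite $\kappa$ in terms of $\mu$. The first step substitutes $\kappa(z)=c/\mu(z)$, with $c>0$ a constant (this is the content of $\mu\propto\kappa^{-1}$). On each orbit $O=\{z,\sigma(z)\}$, the linear condition $\sqrt{\kappa(z)}\,\pi_z=\sqrt{\kappa(\sigma(z))}\,\pi_{\sigma(z)}$ of Proposition~\ref{prop:sigma-exact} then becomes
\[
\pi_z/\sqrt{\mu(z)}=\pi_{\sigma(z)}/\sqrt{\mu(\sigma(z))},
\]
since the factor $\sqrt c$ cancels. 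So $\pi|_O=t_O\sqrt\mu|_O$ for a unique scalar $t_O\ge0$. The scalar is nonnegative because $\pi\ge0$ and $\sqrt\mu>0$. It may be zero, because the proposition already covers the boundary strata: an orbit carries mass at both sites or at neither. Neutral sites are unconstrained. This gives the set equality on the closed simplex.

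The second step shows that the constants are global coordinates. I will define the map
\[
\bigl((t_O)_O,(\pi_{z_0})_{z_0}\bigr)\longmapsto\pi,\qquad \pi|_O=t_O\sqrt\mu|_O .
\]
Its domain is the set of nonnegative tuples satisfying the single normalisation $\sum_O t_O\bigl(\sqrt{\mu(z_O)}+\sqrt{\mu(\sigma(z_O))}\bigr)+\sum_{z_0}\pi_{z_0}=1$. This domain is itself a simplex of dimension $n_{\mathrm{pair}}+n_0-1$. The map is affine, and it is injective because the orbit supports are disjoint and each $t_O$ is recovered from any single site of $O$. It is surjective onto $\Sigma$ by the first step, with continuous inverse $\pi\mapsto(\pi_{z_O}/\sqrt{\mu(z_O)},\pi_{z_0})$. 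Hence it is an affine bijection, which is the claimed coordinatisation and is consistent with $\dim\Sigma^\circ=n_{\mathrm{pair}}+n_0-1$.

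The third step is the spherical picture, obtained by applying $\Phi$ to the parametrisation. On $O$ we get $x|_O=2\sqrt{t_O}\,\mu^{1/4}|_O$, and on a neutral site $x_{z_0}=2\sqrt{\pi_{z_0}}$. As a cross-check, $\mu^{1/4}\propto\kappa^{-1/4}$, and on $O$ this is proportional to $\bigl(\kappa(\sigma(z))^{1/4},\kappa(z)^{1/4}\bigr)=\mathbf e_O$ after multiplying by $(\kappa(z)\kappa(\sigma(z)))^{1/4}=K^{1/4}$. So the generators agree with the vectors $\mathbf e_O$ spanning $L$ in Theorem~\ref{thm:great-subsphere}. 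That theorem gives $\Phi(\Sigma^\circ)=L\cap S^{m-1}_+(2)$.

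The remaining point, and the only one needing care, is to replace the linear span $L$ by the \emph{positive} cone. A point of $L$ is $\sum_O a_O\mathbf e_O+\sum_{z_0} b_{z_0}\mathbf e_{z_0}$, and the $\mathbf e_O$ have strictly positive entries on disjoint supports. Therefore the point lies in the open octant exactly when every $a_O>0$ and every $b_{z_0}>0$. This gives $L\cap S^{m-1}_+(2)=\operatorname{cone}_+\{\mu^{1/4}|_O,\text{neutral axes}\}\cap S^{m-1}_+(2)$, taking the open cone (strictly positive coefficients) to match the open octant. Lemma~\ref{lem:scalar} has already discarded the negative sheet. The only subtlety is keeping track of interior versus boundary: the simplex statement holds on the closed $\Sigma$, while the spherical statement is asserted only for $\Sigma^\circ$.
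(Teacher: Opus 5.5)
Your proof is correct and rests on the same computation as the paper's: rescaling $\mu^{1/4}|_O\propto\kappa^{-1/4}|_O$ by $K^{1/4}$ gives $\mathbf e_O$. You run it in the opposite direction, though. You read the simplex description off the linear orbit condition of Proposition~\ref{prop:sigma-exact} and then push forward by $\Phi$, whereas the paper identifies the generators of $L$ from Theorem~\ref{thm:great-subsphere} and pulls back by squaring coordinates.

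Your ordering is slightly more explicit. It covers the closed set $\Sigma$, boundary strata included, without passing through a statement phrased only for $\Sigma^\circ$. It also verifies the global-coordinate claim and the strict positivity of the cone coefficients, both of which the paper leaves implicit.
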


\begin{proof}
With $\mu\propto\kappa^{-1}$ one has
$(\mu(z)^{1/4},\mu(\sigma(z))^{1/4})
=(\kappa(z)^{-1/4},\kappa(\sigma(z))^{-1/4})$; multiplying by the
scalar $(\kappa(z)\kappa(\sigma(z)))^{1/4}=K^{1/4}$ --- the
\emph{same} for every orbit, which is exactly what \textup{(S4)}
provides --- gives $(\kappa(\sigma(z))^{1/4},\kappa(z)^{1/4})$, the
components of $\mathbf e_O$. So $\mathbf e_O\propto\mu^{1/4}|_O$, and
$\Phi^{-1}$ squares coordinates:
$\pi|_O\propto(\mu(z)^{1/2},\mu(\sigma(z))^{1/2})$.
\end{proof}

The intrinsic geometry of the separatrix can now be identified
completely --- and it turns out to be no new geometry at all.

\begin{corollary}[The separatrix is itself a Fisher--Rao simplex]
\label{cor:intrinsic}
Let $d=n_{\mathrm{pair}}+n_0$ and let
$\mathsf m:\Sigma^\circ\to\Delta_{d-1}^\circ$
assign to each $\pi$ its orbit masses
$\mathsf m_O(\pi)=\sum_{z\in O}\pi_z$, together with the neutral
weights. Then $\mathsf m$ is an isometry from $\Sigma^\circ$, with
the Riemannian metric induced by $g^{\mathrm{FR}}$, onto the
Fisher--Rao simplex $(\Delta_{d-1}^\circ,g^{\mathrm{FR}})$. In
particular $\Sigma^\circ$ carries no intrinsic geometry of its
own: it is, canonically, the Fisher--Rao geometry of the
coarse-grained model whose sites are the orbits of $\sigma$, and
every tensorial computation on $\Sigma^\circ$ reduces, through $\Phi$,
to computations in a Euclidean subspace. Consequently, every
intrinsic geometric invariant of the separatrix --- curvature,
connection, geodesics, volume --- is inherited canonically from the
Fisher--Rao simplex of the coarse-grained model.
\end{corollary}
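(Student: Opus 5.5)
The plan is to work entirely through the spherical picture. By Theorem~\ref{thm:great-subsphere}, $\Phi(\Sigma^\circ)=L\cap S^{m-1}_+(2)$, and by Proposition~\ref{prop:isometry} the induced metric on $\Sigma^\circ$ is the round metric on this piece of a great subsphere. So it suffices to exhibit a linear isometry from $L$ onto $\mathbb R^d$ that carries $L\cap S^{m-1}_+(2)$ onto $S^{d-1}_+(2)$ and then corresponds, under $\Phi$ and its $d$-dimensional analogue $\Phi_d$, to the map $\mathsf m$.

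The first step is to orthonormalise the spanning family of $L$ found after the proof of Theorem~\ref{thm:great-subsphere}. The vectors $\mathbf e_O$ (one per orbit) and the neutral axes have pairwise disjoint supports, so they are mutually orthogonal. Put $\hat{\mathbf e}_O=\mathbf e_O/|\mathbf e_O|$, with $|\mathbf e_O|^2=\kappa(\sigma(z))^{1/2}+\kappa(z)^{1/2}$. Then every $x\in L$ is written uniquely as
\[
x=\sum_O y_O\,\hat{\mathbf e}_O+\sum_{z_0}y_{z_0}\,e_{z_0},
\]
and $T:x\mapsto y\in\mathbb R^d$ is a linear isometry. Because $\hat{\mathbf e}_O$ has strictly positive entries on $O$, $x$ lies in the open positive octant exactly when all $y_O,y_{z_0}>0$. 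Since $T$ preserves norms, $T$ therefore maps $L\cap S^{m-1}_+(2)$ onto $S^{d-1}_+(2)$, and it is an isometry of the induced round metrics.

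The second step, which I expect to be the only genuine check, is to identify $\Phi_d^{-1}\circ T\circ\Phi$ with $\mathsf m$. For $\pi\in\Sigma^\circ$, set $x=2\sqrt\pi$. On an orbit $O$ we have $x|_O=y_O\hat{\mathbf e}_O$, so
\[
\mathsf m_O(\pi)=\tfrac14\sum_{z\in O}x_z^2=\tfrac14\,y_O^2\,|\hat{\mathbf e}_O|^2=\tfrac14\,y_O^2 .
\]
At a neutral site, $\pi_{z_0}=y_{z_0}^2/4$ directly. Hence $\Phi_d(\mathsf m(\pi))=T(\Phi(\pi))$, using positivity of $y$ to take square roots, which is the octant mechanism of Lemma~\ref{lem:scalar} once more. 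The identity $\mathsf m=\Phi_d^{-1}\circ T\circ\Phi|_{\Sigma^\circ}$ then exhibits $\mathsf m$ as a composite of three isometries, namely Proposition~\ref{prop:isometry} in dimensions $m$ and $d$ together with $T$. In particular $\mathsf m$ is a bijection onto $\Delta^\circ_{d-1}$. Surjectivity also follows directly: given orbit masses, the critical ratio of Proposition~\ref{prop:sigma-exact} fixes the split within each orbit.

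The closing assertions follow immediately. Curvature, connection, geodesics and volume are Riemannian invariants, so they transfer along the isometry $\mathsf m$. Since $\Phi(\Sigma^\circ)$ lies in the Euclidean subspace $L$, tensorial computations reduce to computations in $L\cong\mathbb R^d$. One point to state explicitly is that $\mathsf m$ is defined only on $\Sigma^\circ$ and not on all of $\Delta_{m-1}$. On $\Sigma^\circ$ the map is injective precisely because the within-orbit ratio is frozen.
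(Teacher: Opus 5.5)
Your proof is correct, but it takes a different route from the paper. You go through the sphere; the paper's proof never leaves the simplex.

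\textbf{The paper's route.} The paper parametrises $\Sigma$ orbitwise as $\pi|_O=C_O\mathbf v_O$ along the critical vector $\mathbf v_O=(\sqrt{\kappa(\sigma(z))},\sqrt{\kappa(z)})$. Tangent variations are then $\delta\pi|_O=\delta C_O\,\mathbf v_O$, plus free neutral variations. It computes the pull-back of $g^{\mathrm{FR}}$ directly. The orbit block $\sum_{z\in O}\delta\pi_z\delta\pi'_z/\pi_z$ collapses to $(\delta C_O\delta C'_O/C_O)\sum_{z\in O}\mathbf v_{O,z}$. Since $\mathsf m_O=C_Os_O$ with $s_O=\sqrt{\kappa(z)}+\sqrt{\kappa(\sigma(z))}$, this equals $\delta\mathsf m_O\,\delta\mathsf m'_O/\mathsf m_O$.

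\textbf{What each route buys.} The paper's route needs only the linear description of Proposition~\ref{prop:sigma-exact}, not Theorem~\ref{thm:great-subsphere}. It is a local tensor identity, and the fact that $\mathsf m$ is a bijection onto $\Delta^\circ_{d-1}$ is left implicit there (it is evident from $\mathsf m_O=C_Os_O$). Your route relies on the linearisation theorem. Its key observation is that an orbit mass is one quarter of the squared Euclidean norm of the orbit block of $x=2\sqrt\pi$, which the orthonormal change of basis $T$ on $L$ preserves. The gain is global: bijectivity comes for free from the factorisation $\mathsf m=\Phi_d^{-1}\circ T\circ\Phi|_{\Sigma^\circ}$. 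That factorisation is also, literally, the statement's clause that computations on $\Sigma^\circ$ reduce, through $\Phi$, to a Euclidean subspace.

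\textbf{Common core.} Both arguments are the \v{C}encov-type invariance of $g^{\mathrm{FR}}$ under the Markov embedding $\mathsf m^{-1}$, which splits each orbit's mass in the fixed critical ratio. The paper checks this tensorially in $\pi$; you check it as a linear isometric embedding in $\sqrt\pi$.
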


\begin{proof}
By Proposition~\ref{prop:sigma-exact}, on $\Sigma$ each
restriction $\pi|_O$ is a positive multiple $C_O\mathbf v_O$ of
the critical vector, so tangent variations within $\Sigma$ are
exactly $\delta\pi|_O=\delta C_O\,\mathbf v_O$ together with free
neutral variations. Writing
$s_O:=\sqrt{\kappa(z)}+\sqrt{\kappa(\sigma(z))}$, the orbit mass
is $\mathsf m_O=C_Os_O$, and
\[
\sum_{z\in O}\frac{\delta\pi_z\,\delta\pi'_z}{\pi_z}
=\frac{\delta C_O\,\delta C'_O}{C_O}\sum_{z\in O}\mathbf v_{O,z}
=\frac{(\delta C_Os_O)(\delta C'_Os_O)}{C_Os_O}
=\frac{\delta\mathsf m_O\,\delta\mathsf m'_O}{\mathsf m_O}.
\]
Summing over orbits and neutral sites gives
$g^{\mathrm{FR}}_\pi(\delta\pi,\delta\pi')
=g^{\mathrm{FR}}_{\mathsf m}(\delta\mathsf m,\delta\mathsf m')$,
and normalisation matches:
$\sum_O\delta\mathsf m_O+\sum_{z_0}\delta\pi_{z_0}
=\sum_i\delta\pi_i=0$.
\end{proof}

\begin{remark}[The gauge is the separatrix]
\label{rem:gauge}
Corollary~\ref{cor:cone} expresses in closed form the freedom
described in Paper~III (Proposition~5.1): the relative weights of
orbits and neutral sites, far from being a residual ambiguity in
the choice of an invariant point, sweep out exactly the
separatrix. It also upgrades Remark~\ref{rem:pi-mu} from a
statement about particular critical distributions to a statement
about the whole invariant set: the separatrix \emph{is} the
orbitwise Hellinger image of the reversible measure. This is the
interaction of the two square roots announced in
Remark~\ref{rem:two-weights}, now realised: one square root
(spectral) builds $\Sigma$ inside the simplex; the other
(geometric) carries it to a linear object on the sphere. Indeed
the two are one: the spherical coordinates of the separatrix are
obtained by applying the Hellinger square root \emph{twice},
\[
\mu\;\longmapsto\;\sqrt\mu\;\longmapsto\;\mu^{1/4}
\qquad(\text{spectral measure}\to\text{Hellinger
image}\to\text{spherical coordinate}),
\]
the same single operation building the separatrix and then
transporting it to the sphere.
\end{remark}

\subsection{The reflection}
\label{subsec:reflection}

The proof of Theorem~\ref{thm:great-subsphere} exhibited the
separatrix as the fixed-point set of a reflection; we now give
that reflection its own standing. Let $R_L=2P_L-\mathrm{id}$ be
the orthogonal reflection of $\mathbb R^m$ across $L$, where $P_L$
is the orthogonal projection onto $L$.

\begin{proposition}[$\Sigma$ is a reflective submanifold]
\label{prop:reflective}
\begin{enumerate}
\item[\textup{(i)}] $R_L$ restricts to an isometric involution of
$S^{m-1}(2)$ whose fixed-point set is exactly
$L\cap S^{m-1}(2)$; thus $\Phi(\Sigma)$ is a \emph{reflective
submanifold} of the round sphere.
\item[\textup{(ii)}] The map
$\iota:=\Phi^{-1}\circ R_L\circ\Phi$ is defined on the open set
\[
U=\bigl\{\pi\in\Delta^\circ_{m-1}:\
2P_L\sqrt\pi-\sqrt\pi>0\ \text{componentwise}\bigr\},
\]
an open neighbourhood of $\Sigma^\circ$, on which
it is an involutive isometry of the Fisher--Rao metric with
$\operatorname{Fix}(\iota)=\Sigma\cap U$.
\item[\textup{(iii)}] $R_L$ reverses every transverse functional:
writing $\ell_O(x)=\kappa(z)^{1/4}x_z-\kappa(\sigma(z))^{1/4}
x_{\sigma(z)}$ for the linear form cutting out the orbit
hyperplane,
\[
\ell_O(R_Lx)=-\ell_O(x)\qquad\text{for every orbit }O.
\]
In particular the differential of $\iota$ along $\Sigma$ is $+1$
on $T\Sigma$ and $-1$ on the normal bundle, and for $r=1$ the
reflection exchanges the two connected components of
$S^{m-1}(2)\setminus\bigl(L\cap S^{m-1}(2)\bigr)$ --- a statement
about the \emph{full} sphere. Nothing is claimed about the number of
components of $S^{m-1}_+(2)\setminus\Phi(\Sigma^\circ)$: the octant
is not $R_L$-invariant, as Remark~\ref{rem:iota-domain} records. On
the octant one has only that, wherever both sides meet the domain,
the two sides are the sign regions of the orbit functional
$\ell_O$.
\end{enumerate}
\end{proposition}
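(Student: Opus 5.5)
The plan is to work entirely on the sphere through $\Phi$ (Proposition~\ref{prop:isometry}) and reduce each part to elementary linear algebra of the orthogonal reflection $R_L=2P_L-\mathrm{id}$, where $L$ is the subspace of Theorem~\ref{thm:great-subsphere}.

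For (i), $R_L$ is a linear orthogonal involution of $\mathbb R^m$, since $P_L$ is a symmetric idempotent and so $R_L^\top=R_L$ and $R_L^2=\mathrm{id}$. It therefore preserves Euclidean norms and restricts to an isometric involution of $S^{m-1}(2)$. Its fixed vectors are exactly $\{x: P_Lx=x\}=L$, so $\operatorname{Fix}=L\cap S^{m-1}(2)$. Combined with Theorem~\ref{thm:great-subsphere}, this exhibits $\Phi(\Sigma^\circ)=L\cap S^{m-1}_+(2)$ as an open piece of the fixed-point set of an isometric involution, which is the definition of a reflective submanifold. Total geodesy was already recorded in that proof.

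For (ii), $\iota(\pi)$ is defined exactly when $R_L\Phi(\pi)$ lies in the open octant. Up to the factor $2$ in $\Phi$, this is the componentwise condition $2P_L\sqrt\pi-\sqrt\pi>0$ defining $U$. $U$ is open by continuity of $\pi\mapsto R_L\sqrt\pi$. It contains $\Sigma^\circ$, because there $R_L$ fixes $\Phi(\pi)$, which has positive entries. On $U$, $\iota$ is the composite of three isometries: $\Phi$, the restriction of $R_L$, and $\Phi^{-1}$ on the octant. Hence $\iota$ is an isometry for $g^{\mathrm{FR}}$. For involutivity, I first need $\iota(U)\subseteq U$. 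This follows because $R_L^2=\mathrm{id}$: if $\pi\in U$ then $R_L(R_L\Phi(\pi))=\Phi(\pi)$ is positive, so $\iota(\pi)\in U$ and $\iota^2=\mathrm{id}$. Finally, $\iota(\pi)=\pi$ iff $\Phi(\pi)\in L$, and by Theorem~\ref{thm:great-subsphere} this holds iff $\pi\in\Sigma^\circ$ (equivalently $\Sigma\cap U$, since $U$ is interior).

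For (iii), write $\ell_O(x)=\langle \mathbf a_O,x\rangle$ with $\mathbf a_O$ supported on $O$. By definition, $L=\bigcap_O\ker\ell_O=\{\mathbf a_O\}_O^\perp$, so each $\mathbf a_O\in L^\perp$. Since $R_L$ is symmetric and acts as $-1$ on $L^\perp$, we get $\ell_O(R_Lx)=\langle R_L\mathbf a_O,x\rangle=-\ell_O(x)$. The differential statement follows because $d\Phi$ intertwines $d\iota$ with $R_L$ (which is linear). $R_L$ is $+1$ on $L$, which contains $T\Phi(\Sigma^\circ)$ along the subsphere, and $-1$ on $L^\perp$, which is the normal bundle of the great subsphere inside the sphere.

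For $r=1$, $L$ is a hyperplane with normal $\mathbf a_O$. The complement $S^{m-1}(2)\setminus L$ consists of the two open hemispheres $\{\ell_O>0\}$ and $\{\ell_O<0\}$, each connected, being a hemisphere of a sphere of dimension at least $1$. The identity above swaps them. The final octant clause is purely descriptive: on the octant the sides are read off from the sign of $\ell_O$, and no component count is asserted.

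I expect the main obstacle to be the bookkeeping in (ii). It requires checking carefully that $\iota$ maps $U$ into itself, which is what involutivity needs, and that $U$ is genuinely a neighbourhood of $\Sigma^\circ$ and not all of the simplex. The rest is routine.
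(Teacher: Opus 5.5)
Your proposal is correct and is essentially the paper's own argument. Both reduce everything to the linear algebra of $R_L=2P_L-\mathrm{id}$ (fixed set $L$, and action $-\mathrm{id}$ on $L^\perp$, which contains the coefficient vectors of the $\ell_O$), transported to the simplex through the isometry $\Phi$; your explicit check that $\iota(U)\subseteq U$ via $R_L^2=\mathrm{id}$ usefully spells out a step the paper leaves implicit when it says involutivity is inherited from $R_L$.
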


\begin{proof}
(i) $R_L$ is a Euclidean isometry fixing the origin, hence maps
$S^{m-1}(2)$ to itself; $R_Lx=x\iff x\in L$. (ii) At
$\pi\in\Sigma\cap\Delta^\circ$ one has $\sqrt\pi\in L$, so
$2P_L\sqrt\pi-\sqrt\pi=\sqrt\pi>0$: the defining inequality holds
and is open, so $U$ is an open neighbourhood of the separatrix.
On $U$, $R_L\Phi(\pi)$ has positive coordinates, so
$\iota(\pi)=\Phi^{-1}(R_L\Phi(\pi))$ is well defined;
involutivity and isometry are inherited from $R_L$ through the
isometry $\Phi$, and
$\iota(\pi)=\pi\iff R_L\Phi(\pi)=\Phi(\pi)\iff\Phi(\pi)\in L
\iff\pi\in\Sigma$ by Theorem~\ref{thm:great-subsphere}.
(iii) The gradients of the $\ell_O$ span $L^\perp$
(they are supported on disjoint orbits and each is orthogonal to
$L$ by inspection), and $R_L$ acts as $-\mathrm{id}$ on
$L^\perp$: $\ell_O(R_Lx)=\langle g_O,2P_Lx-x\rangle
=-\ell_O(x)$ since $\langle g_O,P_Lx\rangle=0$. The statement
about the differential follows because $R_L$ is linear with
eigenvalues $+1$ on $L$ and $-1$ on $L^\perp$; for $r=1$ the
complement of the single hyperplane has two components,
distinguished by the sign of $\ell_O$, which $\iota$ exchanges.
\end{proof}

\begin{remark}[Global on the sphere, local on the simplex]
\label{rem:iota-domain}
The reflection is globally defined on the sphere; the octant,
however, is not $R_L$-invariant, so $\iota$ acts on the simplex
only on the neighbourhood $U$ of the separatrix. This costs
nothing in what follows: the orientation analysis of
Section~\ref{sec:sign-theorem} takes place on the sphere, where
the reflection is global.
\end{remark}

\subsection{Projection, distance, and the functional $F_3$}
\label{subsec:projection}

The rigidity of $\Sigma^\circ$ makes the ambient metric relations
with the separatrix explicitly computable. We record the two basic
ones: the nearest-point projection and the distance.

\begin{lemma}[Projection Lemma]
\label{lem:projection}
For every $\pi\in\Delta^\circ_{m-1}$ there is, among the points of
the open separatrix $\Sigma^\circ$ --- where the
Fisher--Rao metric is defined --- a unique nearest point to $\pi$,
given in closed form by
\[
p(\pi)=\frac{\bigl(P_L\sqrt\pi\bigr)^2}{\bigl|P_L\sqrt\pi\bigr|^2}
\qquad(\text{componentwise square}),
\]
that is, $\operatorname*{argmin}_{q\in\Sigma^\circ}
d_{\mathrm{FR}}(\pi,q)=p(\pi)$, with
\[
d_{\mathrm{FR}}\bigl(\pi,\Sigma^\circ\bigr)
=2\arccos\bigl|P_L\sqrt\pi\bigr|
=2\arcsin\bigl|P_{L^\perp}\sqrt\pi\bigr|\;<\;\pi.
\]
Since the unique ambient spherical foot already lies in
$\Phi(\Sigma^\circ)$, the same value is obtained if the infimum is
taken over the closure $\bar\Phi(\Sigma)$, whenever the latter is
read as the closure of the Riemannian domain.
\end{lemma}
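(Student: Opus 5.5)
The plan is to transport the problem to the sphere through $\Phi$ and solve it there by elementary Euclidean geometry. By Proposition~\ref{prop:isometry} and Corollary~\ref{cor:geodesics}, $d_{\mathrm{FR}}(\pi,q)=2\arccos\langle\sqrt\pi,\sqrt q\rangle$, where $\sqrt\pi,\sqrt q$ are unit vectors of $\mathbb R^m$. By Theorem~\ref{thm:great-subsphere}, $\sqrt q$ ranges over $L\cap S^{m-1}_+(1)$ as $q$ ranges over $\Sigma^\circ$. Since $\arccos$ is strictly decreasing, minimising the distance is equivalent to maximising the linear functional $\langle\sqrt\pi,y\rangle$ over unit vectors $y\in L$ with positive coordinates.

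First solve the relaxed problem over all unit vectors $y\in L$. Write $\sqrt\pi=P_L\sqrt\pi+P_{L^\perp}\sqrt\pi$. For $y\in L$ this gives $\langle\sqrt\pi,y\rangle=\langle P_L\sqrt\pi,y\rangle\le|P_L\sqrt\pi|$, by Cauchy--Schwarz. Equality holds exactly when $y=P_L\sqrt\pi/|P_L\sqrt\pi|$, and this maximiser is unique provided $P_L\sqrt\pi\neq0$. Squaring componentwise, which is the map $\Phi^{-1}$ in unit normalisation, yields the stated formula for $p(\pi)$. The distance formula follows from $|P_L\sqrt\pi|^2+|P_{L^\perp}\sqrt\pi|^2=1$, using $\arccos t=\arcsin\sqrt{1-t^2}$ for $t\in[0,1]$. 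The strict bound $<\pi$ is the same as $|P_L\sqrt\pi|>0$.

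The main obstacle is to show that the unconstrained maximiser is admissible: $P_L\sqrt\pi$ must be nonzero with strictly positive coordinates, so that it lies in the open octant and $p(\pi)\in\Sigma^\circ$. I would do this by computing $P_L$ explicitly from the orthogonal basis of $L$ given in \S\ref{subsec:cone}. This basis consists of the orbit vectors $\mathbf e_O$, with components $(\kappa(\sigma(z))^{1/4},\kappa(z)^{1/4})$ on $O$, together with the neutral coordinate axes. These vectors have disjoint supports and hence are mutually orthogonal. The projection therefore acts orbitwise and sitewise:
\[
P_L\sqrt\pi=\sum_O\frac{\langle\sqrt\pi,\mathbf e_O\rangle}{|\mathbf e_O|^2}\,\mathbf e_O+\sum_{z_0}\sqrt{\pi_{z_0}}\,\delta_{z_0}.
\]
All entries of $\mathbf e_O$ are strictly positive, and $\sqrt\pi>0$ componentwise, so every coefficient is strictly positive. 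Hence $P_L\sqrt\pi$ has every coordinate positive, on both the orbits and the neutral sites. In particular it is nonzero and lies in the open octant.

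Given admissibility, the constrained maximum coincides with the unconstrained one, which gives existence, uniqueness and the closed form all at once. For the final sentence, note that $\bar\Phi(\Sigma)\subseteq L\cap S^{m-1}(2)$. The supremum of $\langle\sqrt\pi,y\rangle$ over this larger set is still attained only at the ambient foot, and that foot already lies in $\Phi(\Sigma^\circ)$. Taking the infimum over the closure therefore changes neither the value nor the minimiser.
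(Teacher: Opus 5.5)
Your proposal is correct and follows the paper's proof essentially step for step. You transport the problem to the sphere via $\Phi$, identify the nearest point of the great subsphere $L\cap S^{m-1}(2)$ as the normalised projection (deriving this by Cauchy--Schwarz where the paper simply quotes it), use the orthogonal, strictly positive basis $\{\mathbf e_O\}\cup\{\text{neutral axes}\}$ to get $P_L\sqrt\pi>0$ componentwise so that the unconstrained foot lies in $\Phi(\Sigma^\circ)$, and then handle the complementary-angle formulas and the closure exactly as the paper does, with your explicit projection formula making the positivity step slightly more transparent than the paper's wording.
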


\begin{proof}
On the sphere, the nearest point of the great subsphere
$L\cap S^{m-1}(2)$ to a point $x$ is $2P_Lx/|P_Lx|$, unique
precisely when $P_Lx\neq0$, at geodesic distance
$2\arccos(|P_Lx|/2)$. Positivity excludes the degenerate case:
$L$ is spanned by the orbit vectors $\mathbf e_O$ and the neutral
axes, all with nonnegative entries, so for
$x=\Phi(\pi)$ with $\pi$ interior every coefficient
$\langle x,\hat{\mathbf e}_O\rangle$ is strictly positive; hence
\[
P_L\sqrt\pi>0\quad\text{componentwise},
\]
which is the step that puts the minimiser in the open part and not
on a boundary face: nonnegativity of the generators alone would not
suffice, one needs the projection coefficients to be \emph{strictly}
positive, and that is what interiority of $\pi$ supplies. The foot
$2P_Lx/|P_Lx|$ therefore lies in the \emph{open} octant, and its
preimage $p(\pi)$ lies in the open separatrix. Restricting to the open part
costs nothing: the minimisation just performed was over the
\emph{entire} great subsphere $L\cap S^{m-1}(2)$, so no point of
$\bar\Phi(\Sigma)$ --- in particular no boundary stratum --- is
nearer in the ambient spherical metric. The two distance formulas
are complementary angles ($|P_Lx|^2+|P_{L^\perp}x|^2=|x|^2$), and
$d<\pi$ because $P_Lx\neq0$.
\end{proof}

We finally turn to the functional
\[
F_3(\pi)=\int_0^1\langle\pi-\pi^\ast,\psi(\gamma)\rangle^2\,d\gamma,
\]
proposed in the outlook of Paper~III as a Lyapunov candidate for
the relaxation towards $\Sigma$. Its structure is now transparent.

\begin{proposition}[Exact form and zero set of $F_3$]
\label{prop:F3}
Assume the spectral scalars are square-integrable,
$f_\nu\in L^2(0,1)$. \textup{(}This is automatic in any resolvent
realisation, where they are bounded --- for the biased walk
$f_\nu(\gamma)=(1-\gamma)/(1-\lambda_\nu(1-\gamma))$ with
$|\lambda_\nu|<1$ --- but it does not follow from \textup{(S2)},
which gives analyticity only on the \emph{open} interval and so
does not by itself make $F_3$ finite.\textup{)}
Let $\pi^\ast\in\Sigma$ be the fixed reference distribution of
Section~\ref{subsec:orientation} --- reset-neutrality of $\pi^\ast$ is
what gives $\langle\pi^\ast,\mathbf n_\nu\rangle=0$, and without it the
zero set below is a translate of $\Sigma$ rather than $\Sigma$
itself. Let $a_\nu(\pi)=\langle\pi-\pi^\ast,\mathbf n_\nu\rangle$ and let
$G_{\nu\nu'}=\int_0^1 f_\nu(\gamma)f_{\nu'}(\gamma)\,d\gamma$ be
the Gram matrix of the spectral scalars. Then
\[
F_3(\pi)=\mathbf a(\pi)^{\!\top} G\,\mathbf a(\pi),
\]
a quadratic form in the affine coordinates $\mathbf a(\pi)$, with
$G$ positive definite by the linear independence of the $f_\nu$
\textup{(S2)}. Consequently
\[
F_3(\pi)\ge0,\qquad F_3(\pi)=0\iff\pi\in\Sigma:
\]
the functional detects the separatrix exactly.
\end{proposition}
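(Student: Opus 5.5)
The plan is to expand the integrand through the normal-direction decomposition \eqref{eq:decomp}, so that $F_3$ becomes a quadratic form in the finitely many affine coordinates $a_\nu(\pi)$. The zero set is then read off from Proposition~\ref{prop:sigma-exact}.

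\textbf{Quadratic form.} For fixed $\pi$, \eqref{eq:decomp} gives
\[
\langle\pi-\pi^\ast,\psi(\gamma)\rangle=\sum_\nu f_\nu(\gamma)\,a_\nu(\pi),
\]
a finite sum, so its square is $\sum_{\nu,\nu'}f_\nu f_{\nu'}\,a_\nu a_{\nu'}$. Each $f_\nu f_{\nu'}$ is integrable on $(0,1)$ by Cauchy--Schwarz under the assumption $f_\nu\in L^2(0,1)$. We may therefore integrate term by term and obtain $F_3(\pi)=\mathbf a(\pi)^{\!\top}G\,\mathbf a(\pi)$, with $G$ finite and symmetric.

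\textbf{Positive definiteness of $G$.} For $c\in\mathbb R^N$,
\[
c^\top Gc=\int_0^1\Bigl(\sum_\nu c_\nu f_\nu\Bigr)^2d\gamma\ge0.
\]
If this vanishes, then $\sum_\nu c_\nu f_\nu=0$ almost everywhere. The combination is real-analytic, hence continuous, so it vanishes identically on $(0,1)$. Linear independence of the $f_\nu$ in \textup{(S2)} then forces $c=0$. This is the only point where care is needed: an $L^2$ null function must be upgraded to an identically zero one, and continuity supplies that.

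\textbf{Zero set.} Non-negativity of $F_3$ is immediate, and positive definiteness gives $F_3(\pi)=0\iff a_\nu(\pi)=0$ for all $\nu$. Since $\pi^\ast\in\Sigma$, Proposition~\ref{prop:sigma-exact} gives $\langle\pi^\ast,\mathbf n_\nu\rangle=0$, so $a_\nu(\pi)=\langle\pi,\mathbf n_\nu\rangle$. By the same proposition, $\langle\pi,\mathbf n_\nu\rangle=0$ for every $\nu$ exactly characterises $\pi\in\Sigma$ on the closed simplex. It follows that $F_3(\pi)=0\iff\pi\in\Sigma$.
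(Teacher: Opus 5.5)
Your proposal is correct and follows essentially the same route as the paper's proof: expand $\psi(\gamma)=\sum_\nu f_\nu(\gamma)\mathbf n_\nu$ to get the Gram form, upgrade the almost-everywhere vanishing of $\sum_\nu c_\nu f_\nu$ to identical vanishing by continuity, and read off the zero set from the first characterisation in Proposition~\ref{prop:sigma-exact}. Your explicit Cauchy--Schwarz justification for integrating the products $f_\nu f_{\nu'}$ term by term is a small detail that the paper leaves implicit.
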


\begin{proof}
Expanding $\psi(\gamma)=\sum_\nu f_\nu(\gamma)\mathbf n_\nu$
inside the square and integrating termwise gives the Gram form.
For the positive definiteness, suppose
$\mathbf c^{\!\top}G\,\mathbf c=0$ for some $\mathbf c$. This says
exactly that $\sum_\nu c_\nu f_\nu$ has vanishing $L^2(0,1)$ norm,
hence vanishes almost everywhere; being a finite combination of
continuous functions it therefore vanishes at every point of
$(0,1)$, and the linear independence of the $f_\nu$ as functions
\textup{(S2)} gives $\mathbf c=0$. Hence $F_3(\pi)=0$ iff $\mathbf a(\pi)=0$, i.e.
$\langle\pi,\mathbf n_\nu\rangle=\langle\pi^\ast,\mathbf n_\nu\rangle
=0$ for all $\nu$, which is exactly membership in $\Sigma$ by the
first characterisation of Proposition~\ref{prop:sigma-exact}.
\end{proof}

\begin{remark}[Distance and $F_3$: equivalent, not proportional]
\label{rem:mismatch}
The geometry supplies its own canonical quadratic functional: by
Lemma~\ref{lem:projection},
\[
\mathfrak D(\pi):=\sin^2\!\Bigl(\tfrac12
d_{\mathrm{FR}}(\pi,\Sigma^\circ)\Bigr)
=\bigl|P_{L^\perp}\sqrt\pi\bigr|^2,
\]
a quadratic form \emph{in $\sqrt\pi$}. Both $F_3$ and
$\mathfrak D$ are smooth, nonnegative, and vanish exactly on
$\Sigma^\circ$; and for every compact
$K\subset\Delta^\circ_{m-1}$ there
are constants $c_1,c_2>0$ with
$c_1\mathfrak D(\pi)\le F_3(\pi)\le c_2\mathfrak D(\pi)$ for all
$\pi\in K$. The constants depend on $K$, and no uniformity is
claimed as $\pi$ approaches the boundary. At every
$\pi\in\Sigma^\circ$ both Hessians have kernel exactly
$T_\pi\Sigma^\circ$, and it is worth making this explicit. Let
\[
A:T_\pi\Delta_{m-1}\to\mathbb R^N,\qquad
Ah=\bigl(\langle h,\mathbf n_\nu\rangle\bigr)_\nu .
\]
Since $F_3=\mathbf a^{\!\top}G\,\mathbf a$ with $\mathbf a$ affine in
$\pi$, its Hessian is the constant form $2A^{\!\top}GA$, positive
semidefinite with kernel $\ker A$; and $\ker A=T_\pi\Sigma^\circ$ by
Proposition~\ref{prop:sigma-exact}, so
$\operatorname{rank}A=(m-1)-(m-1-r)=r$. The rank is exactly $r$, and
not $r-1$, because $\mathbf 1\notin V$: a relation
$\sum_Oc_O\mathbf d_O=\mathbf 1$ would need $c_O\sqrt{\kappa(z)}=1$
and $-c_O\sqrt{\kappa(\sigma(z))}=1$ on some orbit, impossible by the
opposite signs of Lemma~\ref{lem:director}\textup{(i)} --- and
impossible already at a neutral site, where every director vanishes.
With $G\succ0$ the Hessian is therefore positive definite on any
complement of $T_\pi\Sigma^\circ$ in $T_\pi\Delta_{m-1}$. For
$\mathfrak D$ the same holds with $A$ replaced by
$\ell=(\ell_O)_O$, of rank $r$ by the disjointness of the orbit
supports; its kernel is again exactly $T_\pi\Sigma^\circ$, since
$h\in T_\pi\Sigma^\circ$ iff $d\Phi_\pi(h)\in L$. Since the two normal quadratic forms are
uniformly positive definite on the transverse complement over any
compact subset of the interior, the standard uniform quadratic
estimate near the common zero submanifold, together with
compactness, yields the stated two-sided equivalence. \textup{(}No
general principle is being invoked: uniformity over the compact part
of $\Sigma$ is what does the work, and it is exactly what fails as
the boundary is approached.\textup{)}
There is, on the other hand, no reason to expect
\emph{proportionality}, and except in specially tuned cases it does
not hold: the Gram matrix $G$ weights the $r$ normal directions
anisotropically, and the two functionals are quadratic on opposite
sides of the Hellinger square root --- $F_3$ in $\pi$,
$\mathfrak D$ in $\sqrt\pi$. The mismatch is one more appearance of
the central theme of the paper. We also caution
that neither functional need be geodesically convex on the octant,
the ambient curvature being positive; Lyapunov statements must therefore
proceed through the response flow itself rather than through
convexity.
\end{remark}

\subsection{Synthesis: one weighted involution, two sectors}
\label{subsec:sectors}

The results of this section, together with the finite reduction of
Section~\ref{sec:finite-reduction}, admit a single formulation that
makes their common origin visible. Write
$D=\operatorname{diag}\bigl(\sqrt{\kappa(z)}\bigr)$ --- the same
weighting matrix that symmetrises the generator in
Appendix~\ref{app:spectral} --- and let
\[
\mathbb R^m=E^{+}\oplus E^{-},\qquad
E^{\pm}=\{x:\ x_{\sigma(z)}=\pm\,x_z\},
\]
be the decomposition of the ambient space into the symmetric and
antisymmetric eigenspaces of the involution $\sigma$, of dimensions
$n_{\mathrm{pair}}+n_0$ and $n_{\mathrm{pair}}=r$ respectively.

\begin{remark}[The separatrix and the response occupy opposite
sectors]
\label{rem:sectors}
In the $D$-weighted coordinates the objects of this paper
separate along $E^{+}\oplus E^{-}$:
\begin{enumerate}
\item[\textup{(i)}] $\Sigma\cap\Delta_{m-1}
=D^{-1}E^{+}\cap\Delta_{m-1}$. Indeed $D\pi\in E^{+}$
reads $\sqrt{\kappa(z)}\,\pi_z=\sqrt{\kappa(\sigma(z))}\,
\pi_{\sigma(z)}$, which is exactly the ratio constraint defining
the separatrix; the identification is an equality, not an
inclusion.
\item[\textup{(ii)}] $V\subseteq D\,E^{-}$: by
Theorem~\ref{thm:fsr}\textup{(i)} the orbit director satisfies
$D^{-1}\mathbf d_O=(1,-1)$ on $O$, and every normal vanishes at
neutral sites, so $D^{-1}\psi(\gamma)$ is antisymmetric for every
$\gamma$. Since $\dim D\,E^{-}=r$ and $\dim V=r$ by
Theorem~\ref{thm:fsr}\textup{(iii)}, the inclusion is an equality:
the response functionals fill the antisymmetric sector.
\item[\textup{(iii)}] The separatrix contains a distinguished point,
\[
\pi^{\natural}\propto D^{-1}\mathbf 1\propto\kappa^{-1/2},
\]
the unique distribution whose $D$-weighted version is constant; in
the reversible class of Remark~\ref{rem:pi-mu} it is
$\pi^{\natural}\propto\sqrt\mu$ globally, and it is the point of
$\Sigma$ whose $D$-weighted coordinates are all equal,
$D\pi^{\natural}=\mathbf 1$ after normalisation. \textup{(}This does
\emph{not} say that every orbit carries the same mass: the mass of
$O$ is proportional to
$(\sqrt{\kappa(z)}+\sqrt{\kappa(\sigma(z))})/\sqrt K$, which varies
from orbit to orbit.\textup{)} It is one
admissible choice of the reference point $\pi^\ast$ of
Section~\ref{subsec:orientation}, not the only one: by
Theorem~\ref{thm:paper3}\textup{(ii)} and
Corollary~\ref{cor:cone} the relative weights of orbits and
neutral sites remain free, and $\pi^\ast\propto\sqrt\mu$ holds in
general only \emph{orbitwise}.
\end{enumerate}
Consequently the relation of Section~\ref{subsec:orientation} ---
that $\Sigma_{\mathrm{loc}}$ is the annihilator of $V$ inside
$T_{\pi^\ast}\Delta_{m-1}$ --- is not a computation but an instance
of the classical orthogonality of the two eigenspaces of an
involution: for $h=D^{-1}s$ tangent to $\Sigma$ with $s\in E^{+}$,
and $\psi=D a\in V$ with $a\in E^{-}$,
\[
\langle h,\psi\rangle
=\langle D^{-1}s,D a\rangle
=\sum_z s_z a_z
=\langle s,a\rangle=0,
\]
the weights cancelling exactly. The dimension count
$\dim\Sigma_{\mathrm{loc}}+\dim V=(m-1-r)+r=m-1$ then records that the two sectors
are complementary, not merely orthogonal.

A word on which involution. The decomposition $E^+\oplus E^-$ is
that of the \emph{site} involution $\sigma$, and both $\Sigma$ and
$V$ are $D$-weighted images of its eigenspaces: $D\Sigma\subseteq
E^+$ and $D^{-1}V=E^-$. The reflection $R_L$ of
Proposition~\ref{prop:reflective} is a different involution, acting
on each orbit plane as the orthogonal reflection across
$\mathbb R\,\mathbf e_O$, whereas $\sigma$ reflects across the
diagonal $\mathbb R(1,1)$; the two coincide precisely when
$\kappa(z)=\kappa(\sigma(z))$, that is when the weighting is
trivial --- and they are not conjugate by any diagonal weighting
either, since $D\,\sigma\,D^{-1}$ has vanishing diagonal on the
orbit plane for every diagonal $D$, whereas $R_L$ has diagonal
$\pm(\sqrt{\kappa(\sigma(z))}-\sqrt{\kappa(z)})/
(\sqrt{\kappa(z)}+\sqrt{\kappa(\sigma(z))})$. What $D$ conjugates
is the \emph{eigenspace decomposition}, not the involutions:
$D\Sigma\subseteq E^+$ and $D^{-1}V=E^-$. In one sentence: the
eigenspaces of the site involution, weighted by $D$, split the
simplex geometry into the sector where the coupling is blind to
$\gamma$ and the sector in which it responds.
\end{remark}

\section{The sign theorem at the starting sites}
\label{sec:sign-theorem}

The first paper of the series began with a gambler's question:
given the rules of the game --- a reset mechanism with distribution
$\pi$ and rate $\gamma$ --- from which capital $z$ is it favourable
to enter, and how does the answer respond to the reset rate? For
the biased walk with single-site resetting, Paper~I found a
striking dichotomy: increasing $\gamma$ raises the absorption
probability from some starting sites and lowers it from others
and, when the barrier $a$ is even, one exceptional site $z=a/2$
enjoys \emph{exact} invariance --- the ruin probability from the
midpoint does not depend on $\gamma$ at all.

This section settles the pointwise form of
Conjecture~\ref{conj:global} at the vertices of the simplex and,
in doing so, geometrises both phenomena. The invariance is
unconditional: it holds for every spectral duality structure, and
the invariant sites are exactly the fixed points of the involution
(Proposition~\ref{prop:neutral-face} and
Corollary~\ref{cor:neutral-exactly}). The dichotomy holds under a
scalar monotonicity condition satisfied by the canonical
realisation (Theorem~\ref{thm:vertex-sign}): at every vertex the
response is strictly monotone in $\gamma$, and the two members of
each orbit respond in opposite directions --- the two zones of
Paper~I, revealed as the two sides of the separatrix.

\subsection{Starting sites and the vertex diagonal}
\label{subsec:starting-sites}

Throughout this section the distinguished points of the simplex
are its vertices $e_z$. In the canonical realisation the vertex
$e_z$ plays two roles at once: it is the distribution of a reset
mechanism concentrated at the single site $z$, and --- when the
walker also starts at $z$ --- it reproduces exactly the
single-site protocol of Paper~I, in which one \emph{starts where
one is reset}. We refer to this reading as the \emph{vertex
diagonal}. The coupling at a vertex,
\[
C(e_z,\gamma)=\frac{u(z;\gamma)}{s(z;\gamma)},
\]
is a well-defined scalar for every site by \textup{(S2)}, whose
positivity on the closed simplex gives $s(z;\gamma)>0$.

\begin{remark}[Decision variable versus rule]
\label{rem:decision}
The starting site is the gambler's decision; the reset
distribution $\pi$ is part of the rules. The results of this
section concern the decision: they describe the response of the
coupling at each vertex. The dependence of the response on the
rule --- on $\pi$ itself, away from the vertex diagonal --- is a
different question, taken up in
Remark~\ref{rem:full-conjecture}. In the canonical realisation
the physical interpretation of the vertex results passes through
the affine relation of Paper~II,
\[
\rho_z(\gamma)=u(z;\gamma)+\bigl(1-s(z;\gamma)\bigr)\,C(\pi,\gamma),
\]
where $\rho_z$ denotes the ruin probability from $z$ \textup{(}we
avoid the letter $q$, reserved here for the leftward step
probability $q=1-p$\textup{)}. For the vertex
diagonal this identifies the sign structure of
$\partial_\gamma C(e_z,\cdot)$ with that of the ruin probability
studied in Paper~I.
\end{remark}

\subsection{Exact invariance on the neutral face}
\label{subsec:neutral-face}

The first statement requires no hypothesis beyond the axioms.

\begin{proposition}[The neutral face lies in the separatrix]
\label{prop:neutral-face}
Let $(\Delta_{m-1},C)$ be a spectral duality structure and let
$\pi$ be supported on neutral sites, $\sigma(z)=z$ for every
$z\in\operatorname{supp}\pi$. Then
\[
C(\pi,\gamma)=C^\ast\qquad\text{for all }\gamma\in(0,1).
\]
In particular $C(e_{z_0},\cdot)\equiv C^\ast$ at every neutral
site $z_0$: the entire neutral face of the simplex is contained
in~$\Sigma$.
\end{proposition}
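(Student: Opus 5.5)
The plan is to reduce everything to the vanishing of the normal directions at neutral sites, established in Theorem~\ref{thm:fsr}\textup{(i)}, and then read off the value of $C$ through the identity \eqref{eq:C-minus-Cstar}. The argument runs directly from the axioms, with no appeal to any realisation.

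First I would fix a neutral site $z_0$ and recall that \textup{(S4)} with $\sigma(z_0)=z_0$ gives $\kappa(z_0)^2=K$. Positivity in \textup{(S3)} then forces $\kappa(z_0)=\sqrt K$. Substituting into \eqref{eq:normals}, I get $[\mathbf n_\nu]_{z_0}=A_\nu(z_0)\bigl[(1-C^\ast)-C^\ast\sqrt K\bigr]$. This vanishes because $1-C^\ast=\sqrt K\,C^\ast$ at $C^\ast=1/(1+\sqrt K)$. By the decomposition \eqref{eq:decomp}, it follows that $\psi_{z_0}(\gamma)=\sum_\nu f_\nu(\gamma)[\mathbf n_\nu]_{z_0}=0$ for every $\gamma$.

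Next, for $\pi$ supported on neutral sites, the pairing $\langle\pi,\psi(\gamma)\rangle=\sum_{z_0}\pi_{z_0}\psi_{z_0}(\gamma)$ involves only these vanishing components, so it is zero. The numerator in \eqref{eq:C-minus-Cstar} is exactly $\langle\pi,u-C^\ast s\rangle=\langle\pi,\psi(\gamma)\rangle$. I will use this form rather than $L_\gamma$, so that no choice of reference point $\pi^\ast$ is needed. The denominator $\langle\pi,s(\cdot;\gamma)\rangle$ is strictly positive on the closed simplex by the positivity clause of \textup{(S2)}. Hence $C(\pi,\gamma)-C^\ast=0$ for every $\gamma$, which is the claim. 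The vertex case is $\pi=e_{z_0}$, and membership in $\Sigma$ is immediate from the definition, since $C(\pi,\cdot)$ is constant.

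The only point needing care is that $\pi$ may lie on the boundary of the simplex, in fact typically on a vertex or face. The plan handles this by avoiding any use of interior Riemannian structure and relying solely on the boundary-inclusive positivity of the denominator in \textup{(S2)}. As a cross-check, I would note consistency with Proposition~\ref{prop:sigma-exact}: such $\pi$ has zero mass on every orbit, so each orbit condition $\langle\pi|_O,\mathbf d_O\rangle=0$ holds trivially and the neutral weights are unconstrained.
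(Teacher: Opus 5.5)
Your proposal is correct and follows the paper's proof essentially step for step: \textup{(S4)} at a fixed point together with positivity of the weights gives $\kappa(z_0)=\sqrt K$, which kills the bracket in $[\mathbf n_\nu]_{z_0}$ and hence $\psi_{z_0}(\gamma)$. The identity $C(\pi,\gamma)-C^\ast=\langle\pi,\psi(\gamma)\rangle/\langle\pi,s(\cdot;\gamma)\rangle$, with the boundary-inclusive positivity of the denominator from \textup{(S2)}, then finishes it; writing the numerator as $\langle\pi,u-C^\ast s\rangle$ so that no reference point $\pi^\ast$ is needed is a sound, if minor, refinement.
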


\begin{proof}
By \eqref{eq:normals}, at a neutral site the two dual terms
merge:
\[
[\mathbf n_\nu]_{z_0}
=\bigl[(1-C^\ast)-C^\ast\kappa(z_0)\bigr]A_\nu(z_0).
\]
Since $z_0=\sigma(z_0)$, axiom \textup{(S4)} gives
$\kappa(z_0)^2=K$ and hence, by positivity of the weights,
$\kappa(z_0)=\sqrt K$; with $C^\ast=1/(1+\sqrt K)$ the bracket
vanishes, $(1-C^\ast)-C^\ast\sqrt K=0$. Hence every normal $\mathbf n_\nu$
vanishes on neutral sites, and by \eqref{eq:decomp} so does the
orientation field: $\psi_{z_0}(\gamma)=0$ for all $\gamma$. For
$\pi$ supported on neutral sites,
\[
C(\pi,\gamma)-C^\ast
=\frac{\langle\pi,\psi(\gamma)\rangle}{\langle\pi,s(\cdot;\gamma)\rangle}=0.
\qedhere
\]
\end{proof}

The converse also holds, and it is what justifies the word
\emph{exactly} in the introduction to this section.

\begin{corollary}[The invariant vertices are exactly the neutral
sites]
\label{cor:neutral-exactly}
$C(e_z,\cdot)\equiv C^\ast$ if and only if $\sigma(z)=z$.
\end{corollary}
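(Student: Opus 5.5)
The ``if'' direction is Proposition~\ref{prop:neutral-face} applied to $\pi=e_{z_0}$, so the plan concerns only the converse. Suppose $z$ lies on a non-trivial orbit $O=\{z,\sigma(z)\}$ and, for contradiction, that $C(e_z,\cdot)\equiv c$ is constant. I would proceed in two steps.

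\textbf{Step 1: the constant is $C^\ast$.} Since $e_z\in\Sigma$ by definition, Theorem~\ref{thm:paper3} gives $c=C^\ast$. To avoid leaning on the boundary reading of that theorem, one can also argue directly: Proposition~\ref{prop:sigma-exact} characterises $\Sigma$ on the closed simplex, boundary included, via the positivity clause of \textup{(S2)}. So membership $e_z\in\Sigma$ forces $\langle e_z,\mathbf n_\nu\rangle=0$ for every $\nu$. Equivalently, $t_{O'}=\langle e_z|_{O'},\mathbf d_{O'}\rangle=0$ for every orbit $O'$.

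\textbf{Step 2: contradiction on the orbit $O$.} On $O$ we have $e_z|_O=(1,0)$ in the ordered coordinates $(z,\sigma(z))$, or $(0,1)$ if $z$ is the second representative. Then
\[
t_O=\pm\sqrt{\kappa(z)}\neq0
\]
by positivity of the weights in \textup{(S3)}. This contradicts Step~1. Equivalently, the explicit ratio condition $\sqrt{\kappa(z)}\,\pi_z=\sqrt{\kappa(\sigma(z))}\,\pi_{\sigma(z)}$ fails, since the left side is positive and the right side is zero. This is exactly the remark after Proposition~\ref{prop:sigma-exact}: no orbit can carry mass at a single site.

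\textbf{A direct route, and the main obstacle.} Alternatively, and avoiding Theorem~\ref{thm:paper3} entirely, one can argue with an arbitrary constant $c$ as in the proof of Proposition~\ref{prop:sharp}. Constancy of $C(e_z,\cdot)$ means $u(z;\gamma)-c\,s(z;\gamma)\equiv0$. Expanding via \textup{(S2)}--\textup{(S3)} gives
\[
\sum_\nu f_\nu(\gamma)\bigl[(1-c)A_\nu(z)-c\,\kappa(z)A_\nu(\sigma(z))\bigr]=0 .
\]
Independence of the $f_\nu$ then yields the vector identity $(1-c)\mathbf A(z)=c\,\kappa(z)\mathbf A(\sigma(z))$. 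Full column rank forces both coefficients to vanish: $c=1$ and $c\,\kappa(z)=0$, which is impossible since $\kappa(z)>0$.

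The only delicate point is this coefficient comparison. It uses that $\mathbf A(z)$ and $\mathbf A(\sigma(z))$ are distinct members of a linearly independent family, which holds precisely because $\sigma(z)\neq z$. I would present this second route as the main proof, since it needs nothing beyond \textup{(S2)}--\textup{(S3)}.
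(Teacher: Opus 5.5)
Your proof is correct, and the route you designate as the main one differs from the paper's. The paper argues through the finite reduction. By Theorem~\ref{thm:fsr}\textup{(iii)} the decomposition $\psi(\gamma)=\sum_{O'}\varphi_{O'}(\gamma)\mathbf d_{O'}$ is unique, and $\varphi_O\not\equiv0$ because otherwise $\dim V$ would drop below $r$. The vertex formula \eqref{eq:vertex}, $C(e_z,\gamma)-C^\ast=[\mathbf d_O]_z\varphi_O(\gamma)/s(z;\gamma)$ with $[\mathbf d_O]_z\neq0$, then shows the deviation is not identically zero.

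Your direct route bypasses the orbit directors, the activation theorem and $C^\ast$ altogether. Cross-multiplying at the single vertex, then using independence of the $f_\nu$ and full column rank at the two distinct sites $z,\sigma(z)$, gives $c=1$ and $c\,\kappa(z)=0$ at once. Both arguments rest on the same ingredient, the independence of $\mathbf A(z)$ and $\mathbf A(\sigma(z))$, which also drives Theorem~\ref{thm:fsr}\textup{(ii)}. Yours is shorter, uses only \textup{(S2)}--\textup{(S3)} for the converse (no \textup{(S4)}), and proves more. A non-neutral vertex has non-constant coupling for every candidate constant $c$, so $e_z\notin\Sigma$ follows without appealing to Theorem~\ref{thm:paper3} to identify the constant. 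What the paper's route buys in return is the identity \eqref{eq:vertex} itself. It ties the non-invariance of $e_z$ to the activity of its orbit's response scalar $\varphi_O$, and it is exactly the formula used in the sign theorem.

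Your Step~1--2 route is essentially the paper's argument in another form. Since $\langle e_z,\mathbf n_\nu\rangle=w^{(O)}_\nu[\mathbf d_O]_z$, membership $e_z\in\Sigma$ would mean that $O$ is inactive. One small correction there: invoking Proposition~\ref{prop:sigma-exact} does not actually avoid the boundary reading of Theorem~\ref{thm:paper3}, because that proposition's proof opens by citing it. For the statement as written you need neither, since you may start directly from $C(e_z,\cdot)\equiv C^\ast$.
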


\begin{proof}
Sufficiency is Proposition~\ref{prop:neutral-face}. Conversely, let
$z$ be non-neutral, so that $z$ lies in an orbit $O$. Then
$[\mathbf d_O]_z=\pm\sqrt{\kappa(z)}\neq0$ by \textup{(S3)}, and
$\varphi_O\not\equiv0$. Indeed the directors are non-zero and
supported on pairwise disjoint orbits, hence linearly independent,
and the decomposition
$\psi(\gamma)=\sum_{O'}\varphi_{O'}(\gamma)\mathbf d_{O'}$ of
Theorem~\ref{thm:fsr}\textup{(iii)} is therefore unique, with
$\varphi_{O'}(\gamma)=\psi_{z'}(\gamma)/[\mathbf d_{O'}]_{z'}$ for
any $z'\in O'$. Were $\varphi_O$ identically zero, every
$\psi(\gamma)$ would lie in the span of the remaining $r-1$
directors, giving $\dim V\le r-1$ and contradicting $\dim V=r$. By \eqref{eq:vertex},
$C(e_z,\gamma)-C^\ast=[\mathbf d_O]_z\varphi_O(\gamma)/s(z;\gamma)$
is then not identically zero.
\end{proof}

\begin{remark}[The midpoint of Paper~I]
\label{rem:midpoint}
In the canonical realisation with $a$ even, the unique neutral
site is $z_0=a/2$, and Proposition~\ref{prop:neutral-face}
recovers --- and explains --- the exact $\gamma$-invariance of the
ruin probability from the midpoint discovered in Paper~I. The
midpoint is reset-neutral because it is the fixed point of the
duality involution; the parity effect of Paper~I is nothing but
the existence of such a fixed point. For $a$ odd, $\sigma$ is
fixed-point-free and no site enjoys exact invariance, in agreement
with Paper~I.
\end{remark}

\subsection{The response at a starting site}
\label{subsec:vertex-response}

By Theorem~\ref{thm:fsr} the distinct normal directions are the
orbit directors, $\mathbf n_\nu=\sum_O w^{(O)}_\nu\mathbf d_O$
with each $\mathbf d_O$ supported on its own orbit. Collecting the
spectral scalars orbitwise, define the \emph{orbit response
scalars}
\[
\varphi_O(\gamma):=\sum_\nu f_\nu(\gamma)\,w^{(O)}_\nu ,
\qquad\text{so that}\qquad
\psi(\gamma)=\sum_O \varphi_O(\gamma)\,\mathbf d_O .
\]
Because each director is supported on its own orbit, a vertex only
feels its own orbit: for $z$ in a pair $O=\{z,\sigma(z)\}$ ---
every orbit being active, by
Theorem~\ref{thm:fsr}\textup{(ii)}, so that the qualification is
automatic and we drop it below ---
\begin{equation}
C(e_z,\gamma)-C^\ast
=\frac{[\mathbf d_O]_z\,\varphi_O(\gamma)}{s(z;\gamma)},
\label{eq:vertex}
\end{equation}
and differentiating,
\begin{equation}
\partial_\gamma C(e_z,\gamma)
=\frac{[\mathbf d_O]_z}{s(z;\gamma)^2}\,
\mathcal W\bigl[\varphi_O,s_z\bigr](\gamma),
\qquad
\mathcal W[\varphi,s]:=\varphi'\,s-\varphi\,s',
\label{eq:vertex-derivative}
\end{equation}
with $s_z:=s(z;\cdot)>0$. The factorisation separates geometry
from dynamics: the orbit director determines the side of the
separatrix, while the Wronskian contains the entire dependence on
the reset rate. All that remains is a one-dimensional condition.

\begin{definition}[Vertex condition]
\label{def:vertex-condition}
The structure satisfies the \emph{vertex condition}
\textup{(V)} if there is $\varepsilon\in\{\pm1\}$ such that for
every active orbit $O$ and both of its sites $z\in O$,
\[
\varepsilon\,\mathcal W\bigl[\varphi_O,\,s(z;\cdot)\bigr](\gamma)>0
\qquad\text{for all }\gamma\in(0,1).
\]
Equivalently, since
\[
\frac{\partial}{\partial\gamma}
\left(\frac{\varphi_O}{s_z}\right)
=\frac{\mathcal W[\varphi_O,s_z]}{s_z^{2}},
\qquad s_z>0,
\]
each ratio $\varphi_O(\gamma)/s(z;\gamma)$, $z\in O$, is strictly
monotone in $\gamma$, in the same direction for all orbits and both
orbit sites. Only these $2r$ scalar ratios
enter the theorem below, and the condition involves only scalar
functions of $\gamma$.

Two remarks on the shape of \textup{(V)}. First, it is stated
relative to the orbit representatives fixed once and for all in
Section~\ref{subsec:inherited-structure}: reversing the
representative of an orbit replaces $\mathbf d_O$ by
$-\mathbf d_O$, hence $\varphi_O$ by $-\varphi_O$ and both of that
orbit's Wronskians by their negatives. The genuinely invariant
content of \textup{(V)} is therefore the \emph{intra-orbit}
requirement that the two Wronskians of a given orbit share a sign;
the common value of $\varepsilon$ across orbits is a normalisation.
That it is always achievable is worth stating precisely: the choice
of representative is made \emph{independently in each orbit}, and
reversing it in a single orbit $O$ sends
$\mathbf d_O\mapsto-\mathbf d_O$,
$\varphi_O\mapsto-\varphi_O$, hence
$\mathcal W[\varphi_O,s(z;\cdot)]\mapsto
-\mathcal W[\varphi_O,s(z;\cdot)]$ at both $z\in O$ and leaves every
other orbit untouched. So if each orbit separately has a constant
Wronskian sign, the $r$ signs can be aligned one orbit at a time.
The substantive hypothesis is thus the intra-orbit one; the global
$\varepsilon$ follows. Second, and
consequently, the conclusion of Theorem~\ref{thm:vertex-sign} is
orientation-independent, since only the product
$\varepsilon\cdot\operatorname{sgn}[\mathbf d_O]_z$ appears there,
and that product is unchanged by a reversal.
\end{definition}

\begin{theorem}[Sign theorem at the starting sites]
\label{thm:vertex-sign}
Let $(\Delta_{m-1},C)$ be a spectral duality structure satisfying
the vertex condition \textup{(V)}. Then:
\begin{itemize}
\item[\textup{(i)}] at every neutral site,
      $C(e_{z_0},\cdot)\equiv C^\ast$ \textup{(}no hypothesis
      needed\textup{)};
\item[\textup{(ii)}] at every site $z$ of an active orbit $O$, the
      map $\gamma\mapsto C(e_z,\gamma)$ is strictly monotone on
      $(0,1)$, with
      \[
      \operatorname{sgn}\partial_\gamma C(e_z,\gamma)
      =\varepsilon\cdot\operatorname{sgn}[\mathbf d_O]_z
      \qquad\text{for all }\gamma;
      \]
\item[\textup{(iii)}] the two members of each orbit respond in
      opposite directions,
      \[
      \operatorname{sgn}\partial_\gamma C(e_z,\gamma)
      =-\operatorname{sgn}\partial_\gamma C(e_{\sigma(z)},\gamma),
      \]
      so one starting site of the pair is strictly ascending and
      the other strictly descending; the site involution $\sigma$,
      which permutes the vertices, exchanges the two classes.
\end{itemize}
\end{theorem}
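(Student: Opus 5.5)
The plan is to read each of the three items off the vertex factorisation \eqref{eq:vertex}--\eqref{eq:vertex-derivative}, with \textup{(V)} supplying the only analytic input.

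Item \textup{(i)} is Proposition~\ref{prop:neutral-face} applied to $\pi=e_{z_0}$. It uses no hypothesis beyond the axioms, and I will simply cite it.

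For \textup{(ii)}, fix a site $z$ of an orbit $O$. All orbits are active by Theorem~\ref{thm:fsr}\textup{(ii)}. I first re-derive \eqref{eq:vertex} from the unique decomposition $\psi(\gamma)=\sum_{O'}\varphi_{O'}(\gamma)\mathbf d_{O'}$ of Theorem~\ref{thm:fsr}\textup{(iii)}. Since each $\mathbf d_{O'}$ is supported on its own orbit, $\psi_z(\gamma)=[\mathbf d_O]_z\varphi_O(\gamma)$. Combined with \eqref{eq:C-minus-Cstar} at $\pi=e_z$, this gives
\[
C(e_z,\gamma)-C^\ast=[\mathbf d_O]_z\,\varphi_O(\gamma)/s(z;\gamma).
\]
Differentiability is not an issue: $\varphi_O$ is a finite combination of the real-analytic $f_\nu$, and $s_z=\sum_\nu f_\nu(A_\nu(z)+B_\nu(z))$ is analytic with $s_z>0$ by \textup{(S2)} at the vertex. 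The quotient rule then gives \eqref{eq:vertex-derivative}. The prefactor $[\mathbf d_O]_z/s_z^2$ has the strict sign $\operatorname{sgn}[\mathbf d_O]_z$, and this sign is nonzero because $[\mathbf d_O]_z=\pm\sqrt{\kappa(z)}$ with $\kappa>0$. By \textup{(V)} the Wronskian $\mathcal W[\varphi_O,s_z](\gamma)$ has the strict sign $\varepsilon$ for every $\gamma\in(0,1)$. Hence $\partial_\gamma C(e_z,\gamma)$ never vanishes and has the constant sign $\varepsilon\operatorname{sgn}[\mathbf d_O]_z$. Strict monotonicity on the interval $(0,1)$ then follows from the mean value theorem.

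For \textup{(iii)}, I apply \textup{(ii)} to both members of the orbit, which is legitimate because \textup{(V)} is imposed at both sites $z$ and $\sigma(z)$ with the same $\varepsilon$. The ratio of the two derivative signs is $\operatorname{sgn}[\mathbf d_O]_z\cdot\operatorname{sgn}[\mathbf d_O]_{\sigma(z)}$. By Lemma~\ref{lem:director}\textup{(i)} the components of $\mathbf d_O=(\sqrt{\kappa(z)},-\sqrt{\kappa(\sigma(z))})$ have opposite signs, so this ratio equals $-1$. Thus one site of the pair is strictly ascending and the other strictly descending. Since $\sigma$ maps $e_z\mapsto e_{\sigma(z)}$ within each orbit, it swaps the ascending and descending classes and fixes the neutral vertices.

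I expect no serious obstacle, but the delicate point is the bookkeeping that makes the sign orientation-independent. One has to check that the same representative convention is used for $\mathbf d_O$, $\varphi_O$ and the $\varepsilon$ in \textup{(V)}. As noted after Definition~\ref{def:vertex-condition}, reversing the representative of an orbit flips $[\mathbf d_O]_z$ and $\varepsilon$ together, so the product in \textup{(ii)} is unchanged. I would state this explicitly so that the conclusion does not depend on the choice fixed in Section~\ref{subsec:inherited-structure}.
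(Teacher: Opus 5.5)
Your proof is correct and is essentially the paper's argument. Item (i) is cited from Proposition~\ref{prop:neutral-face}, item (ii) is read off the factorisation \eqref{eq:vertex-derivative} using $s_z^{-2}>0$, the strict sign $\varepsilon$ of the Wronskian from \textup{(V)}, and the fact that $[\mathbf d_O]_z$ is a nonzero constant, and item (iii) follows from the opposite signs of the director components in Lemma~\ref{lem:director}\textup{(i)}, with your extra steps (re-deriving \eqref{eq:vertex}, the mean value theorem, and the orientation bookkeeping) only making explicit what the paper takes from the surrounding text.
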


\begin{proof}
\textup{(i)} is Proposition~\ref{prop:neutral-face}. \textup{(ii)}
is immediate from \eqref{eq:vertex-derivative}: the factor
$s(z;\gamma)^{-2}$ is positive, the Wronskian has the strict sign
$\varepsilon$ by \textup{(V)}, and $[\mathbf d_O]_z$ is a nonzero
constant. \textup{(iii)} follows from
Lemma~\ref{lem:director}\textup{(i)}: the two components of the
director have opposite signs, so $[\mathbf d_O]_z$ and
$[\mathbf d_O]_{\sigma(z)}$ have opposite signs, and by
\textup{(ii)} the two responses do too. Since $\sigma$ permutes the
sites, it permutes the vertices, $e_z\mapsto e_{\sigma(z)}$, and
therefore exchanges the ascending and descending classes.
\end{proof}

\begin{proposition}[Three involutions, and which one acts on
starting sites]
\label{prop:three-involutions}
Let $O=\{z,\sigma(z)\}$ be an orbit and write
$\alpha=\kappa(z)^{1/4}$, $\beta=\kappa(\sigma(z))^{1/4}$. Then:
\begin{enumerate}
\item[\textup{(i)}] The site involution $\sigma$ acts on
$\mathbb R^m$ by the permutation matrix exchanging the coordinates
of each orbit; it is defined on all of $\Delta_{m-1}$, maps
vertices to vertices, $e_z\mapsto e_{\sigma(z)}$, and on the orbit
plane is the reflection across the diagonal $\mathbb R(1,1)$.
\item[\textup{(ii)}] The spherical reflection $R_L$ is defined on
all of $S^{m-1}(2)$ and fixes $\Phi(\Sigma^\circ)$ pointwise; on
the orbit plane it is the reflection across
$\mathbb R\,\mathbf e_O$, with matrix
\[
\frac1{\alpha^2+\beta^2}
\begin{pmatrix}\beta^2-\alpha^2 & 2\alpha\beta\\
2\alpha\beta & \alpha^2-\beta^2\end{pmatrix}.
\]
\item[\textup{(iii)}] The induced simplex map
$\iota=\Phi^{-1}R_L\Phi$ is defined only on the open set
$U\subseteq\Delta^\circ_{m-1}$ of
Proposition~\ref{prop:reflective}\textup{(ii)}. In particular it is
\emph{not defined at the vertices}, which lie on
$\partial\Delta_{m-1}$.
\item[\textup{(iv)}] $R_L$ and $\sigma$ agree on the orbit plane if
and only if $\alpha=\beta$, that is $\kappa(z)=\kappa(\sigma(z))$
--- in the canonical realisation, precisely the unbiased walk
$p=q$. Moreover they are not conjugate by any diagonal weighting:
$D\sigma D^{-1}$ has vanishing diagonal on the orbit plane for
every diagonal $D$, whereas the diagonal of $R_L$ is
$\pm(\beta^2-\alpha^2)/(\alpha^2+\beta^2)$.
\end{enumerate}
Consequently every statement in this section about starting sites
is a statement about $\sigma$ alone. The reflection $\iota$ plays
no role here; what $D$ relates is not the two involutions but their
eigenspace data, $D\Sigma\subseteq E^{+}$ and $D^{-1}V=E^{-}$
\textup{(}Remark~\ref{rem:sectors}\textup{)}.
\end{proposition}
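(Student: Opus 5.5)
The statement is essentially a bookkeeping result, and I will prove it item by item by direct computation, using only objects already fixed.

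\emph{Item (i).} I view $\sigma$ as the linear map $e_z\mapsto e_{\sigma(z)}$ of $\mathbb R^m$, that is, a permutation matrix. It is an involution because $\sigma^2=\mathrm{id}$. It preserves nonnegativity and total mass, hence maps $\Delta_{m-1}$ to itself and vertices to vertices. On an orbit plane with ordered coordinates $(z_O,\sigma(z_O))$ its matrix is $\bigl(\begin{smallmatrix}0&1\\1&0\end{smallmatrix}\bigr)$, the reflection across $\mathbb R(1,1)$.

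\emph{Item (ii).} Global definition on $S^{m-1}(2)$ and pointwise fixing of $\Phi(\Sigma^\circ)\subseteq L$ are already Proposition~\ref{prop:reflective}(i) together with Theorem~\ref{thm:great-subsphere}. For the matrix I use the orthogonal decomposition $L=\bigoplus_O\mathbb R\,\mathbf e_O\oplus(\text{neutral axes})$ from Section~\ref{subsec:cone}. The summands have disjoint supports, so $P_L$, and hence $R_L=2P_L-\mathrm{id}$, is block-diagonal. On the orbit plane $R_L$ is the reflection across $\mathbb R\,\mathbf e_O$ with $\mathbf e_O=(\beta,\alpha)$. The formula $2\mathbf e\mathbf e^{\top}/|\mathbf e|^2-I$ then gives the displayed matrix directly. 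As a check, it fixes $(\beta,\alpha)$ and negates $(\alpha,-\beta)$, which is the gradient of $\ell_O$, consistent with Proposition~\ref{prop:reflective}(iii).

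\emph{Item (iii).} The domain statement is Proposition~\ref{prop:reflective}(ii), so I only need to show that vertices are excluded. This is immediate because $U\subseteq\Delta^\circ_{m-1}$ by definition and $e_z$ has zero coordinates. I would add that $\Phi$ itself, and hence the Fisher--Rao isometry, is only defined on the open simplex. A more substantive reason is also available: for $z\in O$ with $\alpha\neq\beta$, $R_L\bar\Phi(e_z)$ has a negative coordinate. For example, $R_L(2,0)=\tfrac{2}{\alpha^2+\beta^2}(\beta^2-\alpha^2,2\alpha\beta)$, whose first entry is negative when $\alpha>\beta$, and the symmetric case is analogous. Even the continuous extension $\bar\Phi$ therefore leaves the octant.

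\emph{Item (iv).} This is the only step with any content, and it is a comparison of $2\times2$ matrices. The two agree iff $\beta^2-\alpha^2=0$ and $2\alpha\beta/(\alpha^2+\beta^2)=1$, both equivalent to $\alpha=\beta$ since $\alpha,\beta>0$. Equivalently, $\kappa(z)=\kappa(\sigma(z))$. In the canonical realisation I appeal to Appendix~\ref{app:spectral}, where $\kappa(z)\propto\mu(z)^{-1}\propto(q/p)^z$. Then $\kappa(z)=\kappa(\sigma(z))$ for some nontrivial pair $z\neq\sigma(z)$ forces $p=q$. For non-conjugacy, $D\sigma D^{-1}$ with $D=\mathrm{diag}(d_1,d_2)$ equals $\bigl(\begin{smallmatrix}0&d_1/d_2\\d_2/d_1&0\end{smallmatrix}\bigr)$, which has zero diagonal. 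Conjugation by a diagonal matrix cannot create diagonal entries, so $D\sigma D^{-1}=R_L$ would need $\beta^2=\alpha^2$.

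The closing \emph{Consequently} follows by inspection. The objects of Theorem~\ref{thm:vertex-sign}, namely the vertices, the directors and the exchange of classes, involve only $\sigma$, and $\iota$ is undefined at the vertices by (iii). The eigenspace relations $D\Sigma\subseteq E^+$ and $D^{-1}V=E^-$ are quoted from Remark~\ref{rem:sectors}.

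I expect the main obstacle to be the identification with $p=q$ in the canonical realisation in (iv). It depends on the explicit form of $\kappa$ from the appendix, which I would cite rather than rederive.
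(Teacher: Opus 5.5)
Your proposal is correct and follows the paper's route: (i)--(iii) come from the definitions and Proposition~\ref{prop:reflective}, where you also derive the displayed matrix from $2\mathbf e_O\mathbf e_O^{\top}/|\mathbf e_O|^2-I$, which the paper leaves implicit, and (iv) is the same comparison of diagonal and off-diagonal entries together with the observation that $D\sigma D^{-1}$ has zero diagonal. One small correction to your optional aside in (iii): when $\alpha\neq\beta$, only the vertex of the orbit with the larger weight $\kappa$ is reflected out of the closed octant, while the other is sent into it and neutral vertices are fixed by $R_L$, so the claim does not hold for every $z\in O$ --- this is harmless, since your argument for (iii) rests only on $U\subseteq\Delta^\circ_{m-1}$.
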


\begin{proof}
\textup{(i)}--\textup{(iii)} are the definitions together with
Proposition~\ref{prop:reflective}. For \textup{(iv)}, the displayed
matrix equals $\bigl(\begin{smallmatrix}0&1\\1&0\end{smallmatrix}\bigr)$
iff its diagonal vanishes and its off-diagonal entry is $1$, both
equivalent to $\alpha=\beta$; and $D\sigma D^{-1}
=\bigl(\begin{smallmatrix}0&w_1/w_2\\ w_2/w_1&0\end{smallmatrix}\bigr)$
has zero diagonal for every choice of $w_1,w_2>0$.
\end{proof}

\begin{remark}[Geometric reading]
\label{rem:geometric-reading}
Theorem~\ref{thm:vertex-sign} is the pointwise form of
Conjecture~\ref{conj:global} at the vertices: the sign of the
response at a starting site is constant in $\gamma$ and determined
by the side of the separatrix, with $\Sigma$ itself as the exact
locus of indifference. We emphasise the vocabulary: the neutral
face is not \emph{optimal} --- the value $C^\ast$ lies strictly
inside the range of the coupling --- it is \emph{robust}: the
unique locus where the response to the reset rate vanishes
identically.
\end{remark}

\subsection{The canonical realisation and the two zones of
Paper~I}
\label{subsec:canonical-zones}

\begin{proposition}[The canonical realisation satisfies
\textup{(V)}]
\label{prop:walk-V}
For the biased random walk with geometric resetting, the vertex
condition \textup{(V)} holds for every drift $p\in(0,1)$ when
$a=3$ or $a=4$.
\end{proposition}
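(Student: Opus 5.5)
The plan is a direct computation in the canonical realisation, made manageable by the finite reduction. I would set up the biased walk as in Appendix~\ref{app:spectral}: it lives on $\{0,\dots,a\}$, is absorbed at $0$ and $a$, has reset sites $1,\dots,a-1$, and carries the involution $\sigma(z)=a-z$. For $a=3$ there is a single orbit $\{1,2\}$ and no neutral site. For $a=4$ there is a single orbit $\{1,3\}$ and the neutral site $2$. In both cases $r=n_{\mathrm{pair}}=1$ by Theorem~\ref{thm:fsr}, and the neutral site plays no role in \textup{(V)}. So \textup{(V)} reduces to two scalar statements: the Wronskians $\mathcal W[\varphi_O,s(z;\cdot)]$ at the two sites of the single orbit are each of strict constant sign on $(0,1)$, and the two signs agree.

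I would work in the variable $t=1-\gamma\in(0,1)$, in which the spectral scalars are $f_\nu=t/(1-\lambda_\nu t)$ with $\lambda_\nu=2\sqrt{pq}\cos(\nu\pi/a)$, $\nu=1,\dots,a-1$. Since $d\gamma=-dt$, every Wronskian changes sign uniformly under the change of variable, which is harmless for \textup{(V)}. The key algebraic identity is
\[
f_\nu'f_\mu-f_\nu f_\mu'=\frac{(\lambda_\nu-\lambda_\mu)\,t^2}{(1-\lambda_\nu t)^2(1-\lambda_\mu t)^2},
\]
where the prime denotes $d/dt$. Write $\varphi_O=\sum_\nu w_\nu f_\nu$ using the weights $w_\nu=w^{(O)}_\nu$ of Theorem~\ref{thm:fsr}\textup{(i)}, and $s_z=\sum_\mu c_\mu(z) f_\mu$ with $c_\mu(z)=A_\mu(z)+\kappa(z)A_\mu(\sigma(z))$. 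Then
\[
\mathcal W[\varphi_O,s_z]=\sum_{\nu<\mu}\bigl(w_\nu c_\mu(z)-w_\mu c_\nu(z)\bigr)\frac{(\lambda_\nu-\lambda_\mu)\,t^2}{(1-\lambda_\nu t)^2(1-\lambda_\mu t)^2}.
\]
This is a finite sum of explicit positive kernels times constants depending only on $p$.

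For $a=3$ the spectrum is $\lambda=\pm c$ with $c=\sqrt{pq}\in(0,\tfrac12]$, so there is a single pair $(\nu,\mu)$ and the Wronskian is one term. Its sign is the sign of the $2\times2$ minor $w_1c_2(z)-w_2c_1(z)$. I would compute $A_\nu(z)$ from the symmetrised sine eigenvectors, $A_\nu(z)\propto (q/p)^{z/2}\sin(\nu\pi z/a)$ times boundary factors, get $\kappa(z)$ from the $(p/q)^z$ weighting, and check that this minor is nonzero with the same sign at $z=1$ and $z=2$ for every $p$. The minor should factor as a product of a power of $\sqrt{p/q}$ and a positive quantity. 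For $a=4$ the spectrum is $\{-\sqrt2c,0,\sqrt2c\}$, so there are three pairs and the numerator, after clearing the positive denominator $\prod_\nu(1-\lambda_\nu t)^2$, becomes a polynomial in $t$ with coefficients in $p$.

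The main obstacle is the $a=4$ case: showing this numerator has no zero in $t\in(0,1)$, uniformly in $p\in(0,1)$, at both $z=1$ and $z=3$. I would first exploit the symmetry $p\leftrightarrow q$, which corresponds to $z\leftrightarrow\sigma(z)$ and sends $\kappa$ to $\kappa^{-1}$ up to the constant $K$, to reduce to one site or to $p\le\tfrac12$. I would then express the numerator in $c=\sqrt{pq}$ and $\rho=\sqrt{p/q}$, and aim to show that all its coefficients in $t$ share one sign. If that fails, I would group the three pair contributions so that the minor multiplying $\lambda_3-\lambda_1=-2\sqrt2c$ dominates the two middle terms, using $\sqrt2c\,t<1/\sqrt2$ to bound the kernel ratios. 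Finally, I would check the limits $t\to0$ and $t\to1$ to confirm that the sign is strict on the open interval, and note that the two orbit signs agree, which gives the common $\varepsilon$.
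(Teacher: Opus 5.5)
Your set-up is the paper's own: pass to $t=1-\gamma$, expand with the two-mode Wronskian identity, and reduce \textup{(V)} to the sign of a finite pair sum, which is a single minor when $a=3$. But the proposal stops exactly where the proof lives. For $a=4$ you only list strategies, and the fallback you describe cannot work: the term multiplying $\lambda_3-\lambda_1$ is not dominant, it is identically zero.

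The missing idea is that the orbit weights vanish on the odd-indexed modes. By Lemma~\ref{lem:canonical-modes}\textup{(i)}, $A_\nu$ is $Dv_\nu$ up to a site-independent factor. For $\nu$ odd, $v_\nu(\sigma(z))=v_\nu(z)$ by \eqref{eq:eigen-parity}. Hence $(A_\nu(z),A_\nu(\sigma(z)))$ is a multiple of $(\sqrt{\kappa(z)},\sqrt{\kappa(\sigma(z))})$, which spans $\ker M_O$ (Lemma~\ref{lem:orbit-collapse}). So $\mathbf n_\nu$ vanishes on the orbit and $w^{(O)}_\nu=0$; this is the odd-sector statement of Theorem~\ref{thm:fsr-spectral}. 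For $a\in\{3,4\}$ only $\nu=2$ survives. Thus $\varphi_O=w^{(O)}_2f_2$ is a single spectral scalar, not a generic combination, and $w^{(O)}_2\neq0$ by activation (Theorem~\ref{thm:fsr}\textup{(ii)}).

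With this, the computation is immediate; the paper closes the case this way, via $c_1=c_3=0$ and $d_1=d_3$ in its notation.

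For $a=3$: $\mathcal W_t[\varphi_O,s_z]=w^{(O)}_2c_1(z)\,\mathcal W_t[f_2,f_1]$. Its sign is $-\operatorname{sgn}w^{(O)}_2$ at both orbit sites, because $c_1(z)=(A_1+B_1)(z)>0$. So the intra-orbit agreement you left to be checked is automatic.

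For $a=4$: $\lambda_2=0$ gives $f_2=t$. The reflection identity \eqref{eq:mode-reflection} at the odd sites $z=1,3$ gives $A_3=A_1$, and through \textup{(S3)} also $B_3=B_1$, so $c_3(z)=c_1(z)$. With $\lambda_3=-\lambda_1$,
\[
\mathcal W_t[\varphi_O,s_z]
=-w^{(O)}_2c_1(z)\lambda_1t^2\Bigl[\frac1{(1-\lambda_1t)^2}-\frac1{(1+\lambda_1t)^2}\Bigr]
=-\frac{4\,w^{(O)}_2\,c_1(z)\,\lambda_1^2\,t^3}{(1-\lambda_1^2t^2)^2}.
\]
This has strict sign $-\operatorname{sgn}w^{(O)}_2$ on $(0,1)$, for every $p$ and at both sites.

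In your language, once the positive factor $t^2/\prod_\nu(1-\lambda_\nu t)^2$ is removed, what remains is the monomial $-4w^{(O)}_2c_1(z)\lambda_1^2\,t$, which the paper evaluates as $-2q^a\sqrt{\kappa(z)}\,t$. So your ``coefficients of one sign'' test succeeds only in this degenerate way. The monomial also vanishes at $t=0$, so strictness on the open interval comes from the factor $t$, not from inspecting the limit $t\to0$. No $p\leftrightarrow q$ symmetry or domination estimate is needed.
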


\begin{proof}[Proof sketch]
In the canonical realisation the spectral functions are the
resolvent family $f_\nu(\gamma)=t/(1-\lambda_\nu t)$, $t=1-\gamma$.
It is convenient to differentiate in $t$. Since $dt/d\gamma=-1$,
\[
\operatorname{sgn}\mathcal W_\gamma[\varphi,s]
=-\operatorname{sgn}\mathcal W_t[\varphi,s]
\qquad\text{pointwise on }(0,1),
\]
so the existence of a common strict sign is equivalent in the two
parametrisations, with $\varepsilon$ replaced by $-\varepsilon$; no
concrete sign is lost, and \textup{(V)} is insensitive to the
choice. By bilinearity and the
two-mode identity
$\mathcal W_t[f_i,f_j]
=t^2(\lambda_i-\lambda_j)/\bigl[(1-\lambda_it)^2(1-\lambda_jt)^2\bigr]$,
\[
\mathcal W_t\bigl[\varphi_O,s_z\bigr]
=\frac{t^{2}}{\prod_k(1-\lambda_kt)^{2}}\;G_z(t),
\qquad
G_z(t)=\sum_{i<j}\bigl(c_id_j(z)-c_jd_i(z)\bigr)(\lambda_i-\lambda_j)
\!\!\prod_{k\neq i,j}\!\!(1-\lambda_kt)^{2},
\]
where, \emph{at the fixed site $z$},
\[
c_\nu:=w^{(O)}_\nu,
\qquad
d_\nu(z):=(A_\nu+B_\nu)(z),
\qquad \nu=1,\dots,N .
\]
We keep the argument $z$ on $d_\nu(z)$ throughout, because for
$a=4$ the mode indices and the site indices are both
$\{1,2,3\}$ and the active orbit is $\{1,3\}$: an expression such as
``$d_1=d_3$'' below compares the \emph{first and third modes at one
site}, never the values at the two sites of the orbit --- the
latter are in general different, as they must be. The prefactor
is positive, so \textup{(V)} reduces to the polynomial $G_z$
keeping a strict sign on $t\in(0,1)$ at the two sites of each
active orbit. For $a=3$ the sum has a single term and $G_z$ is a
nonzero constant. For $a=4$ the computation can be carried out in full, and we do so,
since the closed form is what the whole case rests on. Write
\[
G_z(t)=\sum_{i<j}M_{ij}\!\!\prod_{k\neq i,j}\!\!(1-\lambda_kt)^2,
\qquad M_{ij}:=\bigl(c_id_j(z)-c_jd_i(z)\bigr)(\lambda_i-\lambda_j),
\]
so that for $m=3$,
\[
G_z=M_{12}(1-\lambda_3t)^2+M_{13}(1-\lambda_2t)^2
+M_{23}(1-\lambda_1t)^2 .
\]
Here $\lambda_1=-\lambda_3=\sqrt2\sqrt{pq}$ and $\lambda_2=0$. The
mode-reflection identity $A_{4-\nu}(z)=(-1)^{z+1}A_\nu(z)$ of
Appendix~\ref{app:spectral} --- a consequence of the symmetry
$w_\nu=w_{a-\nu}$ of the modal weights --- gives $A_3(z)=A_1(z)$ at
both orbit sites; the same symmetry together with
$\sin\frac{3\pi}{4}=\sin\frac{9\pi}{4}$ gives $B_3(z)=B_1(z)$
there, whence
\[
d_1(z)=d_3(z)\qquad\text{at each of }z=1,3
\]
\textup{(}two modes at one site: for instance
$d_1(1)=d_3(1)=(p^2+q^2)/4q$ and
$d_1(3)=d_3(3)=(p^2+q^2)/4p$, which differ from one another, as
values at distinct sites should\textup{)}. The explicit
coefficients give more than $c_1=c_3$, namely
\[
c_1=c_3=0 ,
\]
the orbit weight being carried entirely by the middle mode. Three
consequences follow at once:
\[
M_{13}=\bigl(c_1d_3(z)-c_3d_1(z)\bigr)(\lambda_1-\lambda_3)=0,
\qquad
M_{12}=-c_2d_1(z)\lambda_1,
\qquad
M_{23}=c_2d_3(z)\lambda_1=-M_{12},
\]
using $\lambda_2=0$, $\lambda_2-\lambda_3=\lambda_1$ and
$d_1(z)=d_3(z)$. The central term therefore drops out and the two
survivors have opposite coefficients, so that the spectral symmetry
$\lambda_3=-\lambda_1$ collapses the difference of squares:
\[
G_z=M_{12}\bigl[(1+\lambda_1t)^2-(1-\lambda_1t)^2\bigr]
=4M_{12}\lambda_1t=-4c_2\,d_1(z)\,\lambda_1^{2}\,t .
\]
The three factors are, explicitly,
\[
c_2=\sqrt{\tfrac pq}\;\frac{q^{3}}{p^{2}+q^{2}},
\qquad
d_1(1)=\frac{p^{2}+q^{2}}{4q},
\quad
d_1(3)=\frac{p^{2}+q^{2}}{4p},
\qquad
\lambda_1^{2}=2pq,
\]
so that the factor $p^{2}+q^{2}$ cancels between $c_2$ and
$d_1(z)$ --- which is why the result is as simple as it is --- and
\[
-4c_2\,d_1(1)\,\lambda_1^{2}=-2p^{3/2}q^{5/2},
\qquad
-4c_2\,d_1(3)\,\lambda_1^{2}=-2p^{1/2}q^{7/2}.
\]
Since $\kappa(z)=(p/q)^{a-z}$, these are precisely
$-2q^{a}(p/q)^{3/2}$ and $-2q^{a}(p/q)^{1/2}$ with $a=4$; that is,
at both sites of the active orbit,
\[
G_z(t)\;=\;-\,2\,q^{a}\,\sqrt{\kappa(z)}\;t ,
\]
of strict sign for \emph{every} drift $p\in(0,1)$. Note that the
outcome is a single monomial: the several spectral contributions
cancel down to a term linear in $t$. Since the two
components of each director have opposite signs, the derivatives at
the two sites of an orbit have opposite signs, whatever orientation
is chosen for $\mathbf d_O$; the absolute pattern of
Corollary~\ref{cor:two-zones} is fixed by the sign of the constant
$-2q^{a}$ in the displayed formula.
\end{proof}

The cases $a=3$ and $a=4$ therefore provide the first nontrivial
examples in which condition \textup{(V)} can be verified
completely in closed form.

\begin{remark}[Numerical evidence beyond the closed-form cases]
\label{rem:walk-V-numeric}
For $5\le a\le 8$ the strict sign of $G_z$ was verified at both
sites of every orbit across the drift range, by locating the real
roots of $G_z$ on $(0,1)$: there are none. This is numerical
evidence, not proof, and we label it as such: no claim beyond
$a=4$ is used in any statement of this paper except through the
explicit hypothesis of Corollary~\ref{cor:two-zones}. In every case
computed, $G_z$ is, up to a global sign, either a constant or $t$
times a polynomial with coefficients of one sign --- a
total-positivity pattern \cite{Karlin1968} for the resolvent family
$\{(1-\lambda t)^{-2}\}$, which we expect to govern the general
case and which we do not pursue here.
\end{remark}

\begin{corollary}[The two zones, geometrised]
\label{cor:two-zones}
In the canonical realisation, for every $a$ and drift for which the
vertex condition \textup{(V)} holds --- in particular for $a=3,4$
by Proposition~\ref{prop:walk-V}, and for $5\le a\le8$ by the
numerical evidence of Remark~\ref{rem:walk-V-numeric} --- the
starting sites split into two consecutive zones about the
midpoint,
\[
\underbrace{+\ \cdots\ +}_{z<a/2}\quad
\underbrace{\bigl[\,0\,\bigr]_{a\ \mathrm{even}}}_{z=a/2}\quad
\underbrace{-\ \cdots\ -}_{z>a/2}\, ,
\]
where $\pm$ denotes the constant sign of
$\partial_\gamma C(e_z,\cdot)$. Two things must be distinguished.
That every orbit $\{z,a-z\}$ straddles the midpoint and contributes
exactly one site to each sign class is forced by the orbit
structure alone, and is independent of the parity of $a$. That the
class below the midpoint is the one labelled $+$, on the other
hand, is not: it depends on the orientation chosen for the
directors and on the global sign $\varepsilon$ of \textup{(V)}, and
is fixed here by the canonical orientation together with the sign
of the constant $-2q^{a}$ computed in
Proposition~\ref{prop:walk-V}. When $a$ is even the midpoint is
itself a site, the unique fixed point of $\sigma$, and it is
exactly reset-neutral; when $a$ is odd the value $a/2$ is not an integer, so no site is a
fixed point of $\sigma$, none is invariant, and the two index
ranges $z<a/2$ and $z>a/2$ exhaust the sites with no neutral index
between them. \textup{(}The ``zones'' are sets of integer sites
throughout, not regions of a continuum.\textup{)} This is the two-zone theorem
of Paper~I, obtained here as a corollary of the geometry: the two
zones are the two sides of the separatrix, the neutral site --- when
it exists --- is its intersection with the vertex set, and the
midpoint is fixed by the site involution $\sigma$, whose fixed
point it is, while the associated spherical reflection $R_L$ fixes
the separatrix pointwise; the two are distinct structures, kept
apart by Proposition~\ref{prop:three-involutions}.
\end{corollary}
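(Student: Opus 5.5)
The corollary follows from Theorem~\ref{thm:vertex-sign} together with two facts about the canonical realisation: the form of the involution, and the sign pattern of the orbit directors. I will first recall from Appendix~\ref{app:spectral} that the reset sites are $z=1,\dots,a-1$, that the duality is $\sigma(z)=a-z$, and that $\kappa(z)=(p/q)^{a-z}$, so that (S4) holds with $K=(p/q)^a$. The orbits are then exactly the pairs $\{z,a-z\}$ with $z\neq a/2$. A site is fixed by $\sigma$ iff $z=a/2$, which is an integer site only for $a$ even. This gives the parity statement directly. For $a$ even, Proposition~\ref{prop:neutral-face} shows that the midpoint vertex is exactly reset-neutral, and Corollary~\ref{cor:neutral-exactly} shows that it is the only such vertex; this is the bracketed $0$.

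Next I fix the canonical orientation by taking the representative $z_O:=\min O<a/2$ in each orbit. Then $[\mathbf d_O]_{z_O}=\sqrt{\kappa(z_O)}>0$ and $[\mathbf d_O]_{a-z_O}=-\sqrt{\kappa(a-z_O)}<0$ by Lemma~\ref{lem:director}\textup{(i)}. Under the hypothesis that (V) holds with sign $\varepsilon$, Theorem~\ref{thm:vertex-sign}\textup{(ii)} gives the constant sign $\varepsilon$ for $\partial_\gamma C(e_z,\cdot)$ at every $z<a/2$, and the constant sign $-\varepsilon$ at every $z>a/2$. Hence each orbit contributes one site to each class, by Theorem~\ref{thm:vertex-sign}\textup{(iii)}. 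The two classes are the index ranges $z<a/2$ and $z>a/2$, which are consecutive blocks. This step uses only the orbit structure, so it is independent of parity and of $\varepsilon$, as the statement claims.

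The step I expect to be the main obstacle is identifying the label: showing that $\varepsilon=+1$ in the $\gamma$-parametrisation with this orientation. For $a=4$ I would read it off Proposition~\ref{prop:walk-V}. There $G_z(t)=-2q^a\sqrt{\kappa(z)}\,t<0$, so $\mathcal W_t<0$. Converting via $\operatorname{sgn}\mathcal W_\gamma=-\operatorname{sgn}\mathcal W_t$ gives $\mathcal W_\gamma>0$ at both orbit sites, hence $\varepsilon=+1$. I must check that the $w^{(O)}_\nu$ used there, in particular $c_2>0$, were computed with the representative $z_O=1$, i.e.\ the one below the midpoint. If the appendix used $z_O=3$ instead, every sign flips, and I would reconcile the two conventions explicitly. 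For $a=3$, $G_z$ is a nonzero constant, and I would evaluate its sign from the single two-mode term $M_{12}$ using the explicit eigenvalues $\pm\sqrt{pq}$ and the coefficients from the appendix. For $5\le a\le8$ the claim is conditional on the numerically verified (V), and the label would be taken from the observed sign of $G_z$. I would state this explicitly rather than claim a proof.

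Finally, I will add the geometric reading. Since $L_\gamma(e_z)=[\mathbf d_O]_z\varphi_O(\gamma)$, the two classes are the two sign regions of the orbit functional, that is, the two sides of $\Sigma$ within each orbit plane. The neutral site lies on the neutral face, which Proposition~\ref{prop:neutral-face} places in $\Sigma$. Proposition~\ref{prop:three-involutions} then separates the roles of $\sigma$, which fixes the midpoint vertex, and $R_L$, which fixes $\Phi(\Sigma^\circ)$.
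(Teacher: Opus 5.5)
Your proposal is correct and is essentially the paper's own argument: Theorem~\ref{thm:vertex-sign} with the representatives $z_O=\min O$, the label read off from the sign of $G_z$ in Proposition~\ref{prop:walk-V} via $\operatorname{sgn}\mathcal W_\gamma=-\operatorname{sgn}\mathcal W_t$, the parity and neutral-site claims from the fixed points of $\sigma(z)=a-z$ together with Proposition~\ref{prop:neutral-face} and Corollary~\ref{cor:neutral-exactly}, and Proposition~\ref{prop:three-involutions} for the geometric reading. Your convention worry resolves in your favour, since the paper's $c_2=\sqrt{p/q}\,q^{3}/(p^{2}+q^{2})$ is exactly $[\mathbf n_2]_1/\sqrt{\kappa(1)}$, i.e.\ it is computed with $z_O=1$; and your $a=3$ check, which the paper leaves implicit, goes through: there $c_1=0$, $c_2>0$, and $G_z=-2\sqrt{pq}\,c_2\,d_1(z)=-q^{3}\sqrt{\kappa(z)}<0$ at both sites, which gives the same label.
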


\begin{proposition}[Global orientation for $r=1$]
\label{prop:r-one}
Let the structure satisfy \textup{(S1)--(S4)} with $r=1$, let
$\pi^\ast\in\Sigma$ be the reference distribution of
Section~\ref{subsec:orientation}, and write
$\psi(\gamma)=\varphi(\gamma)\,\mathbf n$ for the response line.
Suppose the Wronskians $\mathcal W[\varphi,s(z;\cdot)]$ keep a
common strict sign $\varepsilon$ on $(0,1)$ at \emph{every} site
$z$ --- neutral sites included. \textup{(}This is strictly stronger
than the vertex condition \textup{(V)}, which constrains only the
two sites of each orbit; the proof below averages over all sites of
$\operatorname{supp}\pi$, so the neutral sites cannot be
omitted.\textup{)} Then for every $\pi\in\Delta_{m-1}^\circ$ and
every $\gamma$,
\[
\operatorname{sgn}\partial_\gamma C(\pi,\gamma)
=\varepsilon\,
\operatorname{sgn}\langle\pi-\pi^\ast,\mathbf n\rangle :
\]
the map $\gamma\mapsto C(\pi,\gamma)$ is strictly monotone off
$\Sigma$ and constant on it, with direction determined once and for
all by the side of the separatrix. \textup{(}Here
$\langle\pi-\pi^\ast,\mathbf n\rangle=0$ cuts out a hyperplane of
$\mathbb R^m$, and
$\Sigma=\{\pi\in\Delta_{m-1}:\langle\pi-\pi^\ast,\mathbf n\rangle=0\}$
is its trace on the simplex; it is the latter that is meant
throughout.\textup{)} In particular the qualitative
content of Conjecture~\ref{conj:global} --- a single
$\gamma$-independent orientation of the simplex, organised by
$\Sigma$ --- holds whenever $r=1$.
\end{proposition}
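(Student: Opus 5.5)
The plan is to write both the coupling and its $\gamma$-derivative in terms of the single scalar $\varphi$ and the fixed director $\mathbf n$, and then reduce the sign of $\partial_\gamma C$ to a positive average of the site Wronskians.

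\emph{Step 1: reduce $C-C^\ast$ to one scalar.} By \eqref{eq:C-minus-Cstar} and $\langle\pi^\ast,\psi(\gamma)\rangle=0$, for every $\pi\in\Delta_{m-1}$
\[
C(\pi,\gamma)-C^\ast=\frac{\langle\pi,\psi(\gamma)\rangle}{\langle\pi,s(\cdot;\gamma)\rangle}
=\frac{\ell(\pi)\,\varphi(\gamma)}{S_\pi(\gamma)},
\]
where $\ell(\pi):=\langle\pi-\pi^\ast,\mathbf n\rangle$ and $S_\pi:=\sum_z\pi_z s(z;\cdot)>0$. Here I use $r=1$: by Theorem~\ref{thm:fsr} we have $V=\mathbb R\,\mathbf n$ with $\mathbf n=\mathbf d_O$ for the single orbit, so $\psi(\gamma)=\varphi(\gamma)\mathbf n$. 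The key point is that $\ell(\pi)$ does not depend on $\gamma$.

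\emph{Step 2: differentiate.} This gives
\[
\partial_\gamma C(\pi,\gamma)=\ell(\pi)\,\frac{\mathcal W[\varphi,S_\pi](\gamma)}{S_\pi(\gamma)^2}.
\]
The Wronskian is linear in its second argument, so
\[
\mathcal W[\varphi,S_\pi]=\sum_z\pi_z\,\mathcal W[\varphi,s(z;\cdot)].
\]
For interior $\pi$ every weight $\pi_z$ is strictly positive. By hypothesis every term has strict sign $\varepsilon$, neutral sites included, so the sum has strict sign $\varepsilon$ at every $\gamma$. This is exactly where the strengthened hypothesis, beyond (V), is needed: without control at the neutral sites the average could change sign.

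\emph{Step 3: read off the sign and the geometry.} Step~2 gives $\operatorname{sgn}\partial_\gamma C=\varepsilon\,\operatorname{sgn}\ell(\pi)$.
\begin{itemize}
\item If $\ell(\pi)\neq0$, the derivative never vanishes, so $C(\pi,\cdot)$ is strictly monotone in $\gamma$.
\item If $\ell(\pi)=0$, then $C(\pi,\cdot)\equiv C^\ast$.
\end{itemize}
It remains to identify $\{\ell=0\}\cap\Delta_{m-1}$ with $\Sigma$. Since $\langle\pi^\ast,\mathbf n\rangle=0$, the condition $\ell(\pi)=0$ is the same as $\langle\pi,\mathbf n\rangle=0$. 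For $r=1$, Proposition~\ref{prop:sigma-exact} shows this is precisely the single orbit condition $\langle\pi|_O,\mathbf d_O\rangle=0$ cutting out $\Sigma$.

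\emph{Main obstacle.} The argument itself is short. The delicate point is bookkeeping: the sign is referred to $\varepsilon$ and $\mathbf n$, not to $\psi(\gamma)$. I will therefore state explicitly that $\varphi$ may carry either constant sign, and that agreement with Conjecture~\ref{conj:global} requires the calibration of Remark~\ref{rem:rotation-reversal}(iii).
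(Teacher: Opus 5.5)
Your proof is correct and follows the paper's argument essentially step for step. You factor $C-C^\ast$ through the fixed director, apply the quotient rule, and use linearity of the Wronskian in its second argument to obtain a positive combination of site Wronskians of common sign $\varepsilon$.

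The one difference is how $\langle\pi^\ast,\mathbf n\rangle=0$ is obtained. The paper gets it from $\langle\pi^\ast,\psi(\gamma)\rangle\equiv0$ together with $\varphi\not\equiv0$, which follows from $\dim V=1$ or equally from your strict Wronskian hypothesis. In your Step~3 this identity is asserted before it is justified; state explicitly that it follows from $\pi^\ast\in\Sigma$ and Proposition~\ref{prop:sigma-exact}.
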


\begin{proof}
Since $C-C^\ast=\langle\pi,\psi\rangle/\langle\pi,s\rangle
=\varphi\,\langle\pi,\mathbf n\rangle/\langle\pi,s\rangle$, the
quotient rule gives
\[
\partial_\gamma C
=\frac{\langle\pi,\mathbf n\rangle}{\langle\pi,s\rangle^{2}}\,
\mathcal W\bigl[\varphi,\langle\pi,s\rangle\bigr].
\]
The Wronskian is linear in its second argument, so
$\mathcal W[\varphi,\langle\pi,s\rangle]
=\sum_z\pi_z\,\mathcal W[\varphi,s(z;\cdot)]$, a combination with
positive weights of terms of the common strict sign $\varepsilon$;
hence it carries the sign $\varepsilon$ for every $\pi$. Note
that $\varphi\not\equiv0$: otherwise $\psi\equiv0$ and
$V=\operatorname{span}\{\psi(\gamma)\}$ would be trivial, contrary
to $\dim V=r=1$ (Theorem~\ref{thm:fsr}\textup{(iii)}). Finally,
$\pi^\ast\in\Sigma$ gives $\langle\pi^\ast,\psi(\gamma)\rangle=0$
for all $\gamma$, which with $\varphi\not\equiv0$ forces
$\langle\pi^\ast,\mathbf n\rangle=0$,
so $\langle\pi,\mathbf n\rangle
=\langle\pi-\pi^\ast,\mathbf n\rangle$, which vanishes exactly on
$\Sigma$.
\end{proof}

In the canonical realisation the hypothesis is verified
analytically. It bears repeating that this is more than
Proposition~\ref{prop:walk-V} delivers: \textup{(V)} constrains only
the two sites of each active orbit, whereas
Proposition~\ref{prop:r-one} averages over
$\operatorname{supp}\pi$ and so needs the neutral sites too. For
$a=3$ there is no neutral site, and the two Wronskians are the
closed form of Proposition~\ref{prop:walk-V}.
For $a=4$ the two orbit sites are again that closed form, and at
the neutral site $z=2$ the same computation gives the even
polynomial
\[
G_2(t)=-\bigl(\alpha+\beta t^{2}\bigr),
\qquad
\alpha=\sqrt p\,q^{5/2},
\qquad
\beta=2\,p^{3/2}q^{7/2},
\]
with $\alpha,\beta>0$ for \emph{every} $p\in(0,1)$; hence $G_2$ has
no zero on $(0,1)$ --- indeed none on $\mathbb R$ --- and keeps the
same strict sign as at the orbit sites. The hypothesis of
Proposition~\ref{prop:r-one} therefore holds for all drifts, not
merely for sampled ones. For these realisations the calibration
$\varepsilon=\operatorname{sgn}\varphi$ of
Remark~\ref{rem:rotation-reversal}\textup{(iii)} holds as well, so
the sign law and the pointwise identification of
Conjecture~\ref{conj:global} coincide: for the canonical family the
identification is a theorem, not a conjecture.

\begin{remark}[The question over the full simplex]
\label{rem:full-conjecture}
Theorem~\ref{thm:vertex-sign} concerns the gambler's decision ---
the starting site --- with the reset rule held fixed along the
vertex diagonal. The dependence of the response on the reset
distribution $\pi$ itself is a \emph{different} question, an
orthogonal section of the same three-variable object: there,
already in the canonical realisation with a single active orbit,
the global identification of
Conjecture~\ref{conj:global} is governed by the invariant $r$ of
Theorem~\ref{thm:fsr}. Indeed, since
$C(\pi,\gamma)-C^\ast=\langle\pi,\psi(\gamma)\rangle/
\langle\pi,s(\cdot;\gamma)\rangle$ and $\psi(\gamma)$ annihilates
$\pi^\ast$, the conjectured identification is equivalent to
$\operatorname{sgn}\partial_\gamma C=\operatorname{sgn}
(C-C^\ast)$, a differential condition which forces
$|C(\pi,\cdot)-C^\ast|$ to increase, hence global monotonicity in
$\gamma$. When $r=1$ the response direction cannot rotate --- all
$\psi(\gamma)$ span the same line --- and
Proposition~\ref{prop:r-one} converts the identification into a
theorem under a one-dimensional sign condition at every site,
verified for the canonical realisations. When $r\ge2$ the situation changes at the outset. Since
$V=\operatorname{span}\{\psi(\gamma):\gamma\in(0,1)\}$ by definition
(Section~\ref{subsec:orientation}), and $\dim V=r$ by
Theorem~\ref{thm:fsr}\textup{(iii)}, the response field is then not
confined to a single line: the vectors
$\{\psi(\gamma)\}_{\gamma\in(0,1)}$ cannot all be collinear, hence
the functionals $\pi\mapsto\langle\pi,\psi(\gamma)\rangle$ are not
all proportional, and therefore at least two of the instantaneous
zero hyperplanes
$\ker(\psi(\gamma)|_{T_{\pi^\ast}\Delta_{m-1}})$ are distinct.
\textup{(}That is the whole of what is claimed. Theorem~\ref{thm:fsr}
gives non-collinearity, hence two distinct hyperplanes; it says
nothing about the hyperplane tracing a curve, moving monotonically
in angle, or any other stronger geometric behaviour, and we assert
none.\textup{)}

Call a \emph{scalar orientation} a fixed linear functional
$\Lambda$ on $\mathbb R^m$, independent of $\gamma$, such that
\[
\operatorname{sgn}\partial_\gamma C(\pi,\gamma)
=\operatorname{sgn}\Lambda(\pi)
\]
for all interior $\pi$ and all $\gamma$; this is the form the
identification takes in
Conjecture~\ref{conj:global}, and what
Proposition~\ref{prop:r-one} supplies when $r=1$. No such $\Lambda$
exists in general once $r\ge2$: the existence of a single interior
$\pi$ for which $\gamma\mapsto C(\pi,\gamma)$ changes its direction
of monotonicity strictly inside $(0,1)$ already contradicts it,
since $\Lambda(\pi)$ would have to carry both signs. That the
conjecture fails in this regime is \emph{not} proved here, and we
separate the two claims carefully. The criterion just given --- that
a single interior $\pi$ with a change of monotonicity refutes any
scalar orientation --- is proved above. That such $\pi$ exist when
$r\ge2$ was established by T.~Newton, who communicated to us
explicit counterexamples in the abstract class; we are indebted to
him for the observation, which is what confines the statements of
this paper to $r=1$ and to the starting sites. Independently, and
by numerical computation rather than in closed form, we have
located such distributions in the canonical realisation itself for
$a=6,7,8$ over a range of drifts, whereas the same search at $r=1$
($a=3,4$) returns none --- as Proposition~\ref{prop:r-one}
requires. No statement elsewhere in this paper depends on either of
these observations.

The pointwise identification is therefore a statement
about the $r=1$ regime, together with the starting sites of
Theorem~\ref{thm:vertex-sign}. What replaces it when $r\ge2$ ---
a combinatorial description of the sign chambers swept by the
rotating response direction --- is open, and belongs to the sequel
(Section~\ref{sec:beyond}).
\end{remark}

Theorem~\ref{thm:vertex-sign} shows that the phenomenon
discovered in Paper~I is not specific to random walks. It is a
structural consequence of spectral duality, with the random walk
providing only one canonical realisation.

\section{The canonical realisation, illustrated}
\label{sec:illustrations}

The results of the preceding sections are structural: they hold for
any spectral duality structure, and the biased random walk enters
only as one realisation among others. It is nevertheless the
realisation that motivated the whole programme, and it is small
enough that the geometry can be drawn. This section does that. Every
quantity displayed below is computed from the walk itself --- from
the discounted absorption functionals $u(z;\gamma)$, $s(z;\gamma)$
of Appendix~\ref{app:spectral} --- and not from the abstract
axioms; the figures therefore provide numerical tests of the
theory, rather than merely illustrations of its notation.

\subsection{The separatrix and its spherical image}
\label{subsec:fig-geometry}

Take $a=4$ and $p=0.65$. The interior sites are $\{1,2,3\}$, the
involution is $\sigma(z)=4-z$, and the orbit structure is the
simplest one that is not trivial: a single active orbit $\{1,3\}$
and a single neutral site $z_0=2$. The compatibility constant is
$K=(p/q)^4$ and the invariant value is
$C^\ast=1/(1+\sqrt K)=0.2248$.

Proposition~\ref{prop:sigma-exact} predicts that $\Sigma$ is the
segment cut out by the single ratio constraint
$\pi_1/\pi_3=\sqrt{\kappa(3)/\kappa(1)}=q/p$, with the neutral
weight $\pi_2$ free. Figure~\ref{fig:geometry}(a) shows it. The
segment runs from the point $(0.35,0,0.65)$ on the edge
$\overline{e_1e_3}$ --- where the whole mass sits on the active
orbit --- to the neutral vertex $e_2$, where the whole mass sits on
the fixed point of $\sigma$. Along that segment, and only there,
$C(\pi,\cdot)$ is constant: numerically, the variation of
$C(\pi^\ast,\gamma)$ over $\gamma\in(0,1)$ is $1.4\times10^{-16}$,
at the level of machine precision --- a check on the computation,
the constancy itself being Proposition~\ref{prop:sigma-exact}. As reference point we take
$\pi^\ast=\pi^{\natural}\propto\kappa^{-1/2}$, the distinguished
element of Remark~\ref{rem:sectors}; here it is
$(0.237,0.323,0.440)$, and it lies on the segment, as it must.
Because this realisation has a single active orbit, the orbitwise
relation
$\pi^\ast_z\propto\sqrt{\mu(z)}$ of Remark~\ref{rem:pi-mu} happens
to hold across all three sites at once; that is a consequence of
having a single active orbit --- the proportionality constant may
differ from orbit to orbit --- and not a general identity.

The two shaded regions are the two sides of the separatrix, and the
signs at the vertices are those of Theorem~\ref{thm:vertex-sign}:
positive at $e_1$, zero at $e_2$, negative at $e_3$. The neutral
vertex is not merely one where the response happens to vanish; by
Proposition~\ref{prop:neutral-face} it lies on $\Sigma$
unconditionally, and it does so because $\sigma$ fixes it.

Panel (b) is the same picture after the square-root embedding
$\Phi(\pi)=2\sqrt\pi$ of Section~\ref{sec:fisher-rao}. The simplex
becomes the positive octant of the sphere of radius $2$, the
vertices become the three coordinate points on the sphere, and the
straight
separatrix of panel (a) becomes an arc. \textup{(}The panel is drawn
in a planar projection; only the angles are to scale.\textup{)} Theorem~\ref{thm:great-subsphere} asserts that this
arc is a great-circle arc --- a totally geodesic subsphere. The
shaded region lies in the two-dimensional plane through the origin
spanned by the two endpoint vectors of the arc, and the arc is the
intersection of that plane with the sphere --- which is what being
a great circle means. We verified this independently: the
plane normal computed from any two points of the image is the same
to machine precision.

\begin{figure}[t]
\centering
\begin{tikzpicture}[scale=1.0]
\begin{scope}
  \fill[blue!8]  (0,0) -- (2.6,0) -- (2,3.464) -- cycle;
  \fill[red!8]   (2.6,0) -- (4,0) -- (2,3.464) -- cycle;
  \draw[thick] (0,0) -- (4,0) -- (2,3.464) -- cycle;
  \draw[very thick,black] (2.6,0) -- (2,3.464);
  \fill (2.406,1.119) circle (2.2pt);
  \node[right=2pt] at (2.406,1.119) {$\pi^\ast$};
  \fill (0,0) circle (1.6pt); \fill (4,0) circle (1.6pt);
  \fill[white,draw=black,thick] (2,3.464) circle (2.6pt);
  \node[below left]  at (0,0)     {$e_1$};
  \node[below right] at (4,0)     {$e_3$};
  \node[above]       at (2,3.464) {$e_2$};
  \node at (0.15,-0.42) {$+$};
  \node at (3.85,-0.42) {$-$};
  \node at (2,3.95) {$0$};
  \node[blue!60!black] at (1.25,1.05) {$\partial_\gamma C>0$};
  \node[red!60!black]  at (2.95,0.75) {$\partial_\gamma C<0$};
  \node[below] at (2.6,-0.05) {\scriptsize$(0.35,0,0.65)$};
  \node[rotate=-80] at (2.42,2.25) {\scriptsize$\Sigma$};
  \node at (2,-1.50) {(a) the simplex $\Delta_2$};
\end{scope}
\begin{scope}[xshift=7.4cm,yshift=1.5cm,scale=2.6]
  \fill[black!12] (0,0) -- (-0.1518,0.5707) -- (0.0000,-0.8165) -- cycle;
  \draw[gray,thick]
    (0.7071,0.4082) -- (0.6088,0.4580) -- (0.5000,0.5000) -- (0.3827,0.5334)
    -- (0.2588,0.5577) -- (0.1305,0.5724) -- (0.0000,0.5774) -- (-0.1305,0.5724)
    -- (-0.2588,0.5577) -- (-0.3827,0.5334) -- (-0.5000,0.5000)
    -- (-0.6088,0.4580) -- (-0.7071,0.4082);
  \draw[gray,thick]
    (0.7071,0.4082) -- (0.7011,0.2982) -- (0.6830,0.1830) -- (0.6533,0.0647)
    -- (0.6124,-0.0547) -- (0.5610,-0.1732) -- (0.5000,-0.2887)
    -- (0.4305,-0.3992) -- (0.3536,-0.5030) -- (0.2706,-0.5981)
    -- (0.1830,-0.6830) -- (0.0923,-0.7562) -- (0.0000,-0.8165);
  \draw[gray,thick]
    (-0.7071,0.4082) -- (-0.7011,0.2982) -- (-0.6830,0.1830) -- (-0.6533,0.0647)
    -- (-0.6124,-0.0547) -- (-0.5610,-0.1732) -- (-0.5000,-0.2887)
    -- (-0.4305,-0.3992) -- (-0.3536,-0.5030) -- (-0.2706,-0.5981)
    -- (-0.1830,-0.6830) -- (-0.0923,-0.7562) -- (-0.0000,-0.8165);
  \draw[black!45,dashed] (0,0) -- (-0.1518,0.5707);
  \draw[black!45,dashed] (0,0) -- (0.0000,-0.8165);
  \fill[black!55] (0,0) circle (0.6pt);
  \node[black!55,left=1pt,scale=0.34] at (0,0) {$O$};
  \draw[very thick,black]
    (-0.1518,0.5707) -- (-0.1453,0.3107) -- (-0.1385,0.1876) -- (-0.1314,0.0860)
    -- (-0.1239,-0.0055) -- (-0.1159,-0.0912) -- (-0.1073,-0.1738)
    -- (-0.0980,-0.2552) -- (-0.0876,-0.3372) -- (-0.0759,-0.4218)
    -- (-0.0620,-0.5124) -- (-0.0438,-0.6170) -- (0.0000,-0.8165);
  \fill (-0.1249,0.0056) circle (0.9pt);
  \node[right=1pt,scale=0.34] at (-0.1249,0.0056) {$\Phi(\pi^\ast)$};
  \fill (0.7071,0.4082) circle (0.7pt);
  \fill (-0.7071,0.4082) circle (0.7pt);
  \fill[white,draw=black] (0.0000,-0.8165) circle (1.0pt);
  \node[above right=0pt,scale=0.34] at (0.7071,0.4082)  {$\Phi(e_1)$};
  \node[above left=0pt,scale=0.34]  at (-0.7071,0.4082) {$\Phi(e_3)$};
  \node[below=1pt,scale=0.34]       at (0.0000,-0.8165) {$\Phi(e_2)$};
  \node[scale=0.30,rotate=-84] at (-0.175,0.30) {$\Phi(\Sigma)$};
  \node at (0,-1.16)
       {(b) the spherical octant, $\Phi(\pi)=2\sqrt{\pi}$};
\end{scope}
\end{tikzpicture}
\caption{The separatrix for the biased walk with $a=4$, $p=0.65$:
one active orbit $\{1,3\}$ and one neutral site $z_0=2$. (a) In the
simplex, $\Sigma$ is the segment fixed by the single ratio
constraint $\pi_1/\pi_3=q/p$, joining the critical point of the
orbit edge to the neutral vertex; it separates the two zones of
strict monotonicity, and the vertex signs are $[+,0,-]$ as in
Theorem~\ref{thm:vertex-sign}. (b) Under the square-root embedding
$\Phi(\pi)=2\sqrt\pi$
the simplex becomes the positive octant and $\Sigma$ becomes a
great-circle arc: it is the intersection of the sphere with the
shaded plane through the origin, which is the content of
Theorem~\ref{thm:great-subsphere}.}
\label{fig:geometry}
\end{figure}

\subsection{The response at the starting sites}
\label{subsec:fig-vertices}

Figure~\ref{fig:response}(a) shows the sign theorem at work for
$a=6$, $p=0.65$, where the interior sites are $\{1,\dots,5\}$, the
orbits are $\{1,5\}$ and $\{2,4\}$, and $z_0=3$ is neutral. The five
curves are the responses $\gamma\mapsto C(e_z,\gamma)$ at the five
vertices of $\Delta_4$; by the discussion of
Section~\ref{sec:sign-theorem} each of them is also the ruin
probability of a gambler who starts at $z$ and resets to $z$.

The picture is exactly the dichotomy of
Corollary~\ref{cor:two-zones}. The two sites below the midpoint
respond upwards, the two above respond downwards, the midpoint does
not respond at all, and each orbit contributes one site to each
zone. The flat line is $C(e_3,\cdot)\equiv C^\ast=0.1350$, constant
to machine precision; it is not a numerical coincidence but
Proposition~\ref{prop:neutral-face}. The behaviour as $\gamma\to1$
is worth reading correctly. In that limit the survival factor
$t=1-\gamma$ tends to zero, so the leading contribution comes from
trajectories absorbed in the minimum possible number of steps: in
the limit, $C$ is decided by whichever boundary is nearer in
lattice distance. From $z=1$ and
$z=2$ the origin is nearer ($1$ and $2$ steps against $5$ and $4$),
so those two curves saturate at $1$; from $z=4$ and $z=5$ the far
boundary is nearer and they saturate at $0$. The midpoint $z_0=3$ is
equidistant --- three steps either way --- and there the two
shortest paths carry weights $q^3$ and $p^3$, so that
\[
C(e_3,\gamma)\;\longrightarrow\;\frac{q^{3}}{q^{3}+p^{3}}
=\frac{1}{1+(p/q)^{3}}=C^\ast,
\]
which is $0.1350$ at $p=0.65$. The same computation at general even
$a$ gives $q^{a/2}/(q^{a/2}+p^{a/2})=1/(1+\sqrt K)=C^\ast$: the
midpoint value is recovered from shortest paths alone, without any
spectral input. The limiting values are therefore \emph{not} the
reset-free ruin probabilities, that is the values
$C(e_z,0)=u(z;0)$ of the same functionals at $\gamma=0$, which for
these five sites are $(0.527,0.272,0.135,0.061,0.021)$; in this
realisation the two coincide only at the midpoint, and that
coincidence is the midpoint invariance of Paper~I.

\subsection{Where the global picture breaks: the invariant $r$}
\label{subsec:fig-rotation}

Theorem~\ref{thm:fsr} attaches to each realisation the integer
$r=\dim V$, the dimension of the span of the response functionals,
and Section~\ref{sec:finite-reduction} identifies it with the number
of active orbits. Remark~\ref{rem:full-conjecture} explains why this
integer, and not the drift, decides whether the response admits a
global scalar orientation. For $r=1$ all response functionals lie
on a single line --- which by itself leaves their orientation free,
since a sign change keeps a vector on its line.
Proposition~\ref{prop:r-one}, together with the one-dimensional
condition holding at every site, neutral sites included, which the
canonical realisations with $r=1$ satisfy, fixes that orientation
and yields the corresponding global sign law. That law is stated with respect to the fixed
director $\mathbf n$ and the constant $\varepsilon$, not with
respect to $\psi(\gamma)$ itself; the two agree exactly when
$\varepsilon=\operatorname{sgn}\varphi$, which holds throughout the
canonical family. For $r\ge2$, Theorem~\ref{thm:fsr} gives that the family
$\{\psi(\gamma)\}_{\gamma\in(0,1)}$ spans a space of dimension at
least two; hence the nonzero response vectors cannot all be
collinear, and the direction does not remain constant over the whole
interval.
Distributions lying in mixed sign chambers may then reverse.

Figure~\ref{fig:response}(b) exhibits such a reversal. The
realisation is $a=6$, $p=0.9$, so $r=2$; the distribution is
$\pi=(0.0117,0.1462,0.2621,0.2558,0.3242)$, which is interior and
has no special structure --- it was drawn at random. Its response
rises until $\gamma\approx0.65$ and falls thereafter. No boundary in the space of distributions is crossed
here, and there is no small-denominator artefact: the curve is
computed from the linear system that defines $u$ and $s$, its
derivative is evaluated in closed form, and both lobes are of
comparable size. The reversal is caused solely by the rotation of
$\psi(\gamma)$, as the next paragraph makes explicit. What fails is not the theory but the
attempt to summarise a rotating direction by a single sign. For
$a=4$, where $r=1$, we found no such distribution: in a numerical
check, $200$ randomly sampled interior distributions at each of four
drift values were all strictly monotone, as the reduction above
predicts.

\begin{figure}[t]
\centering
\begin{tikzpicture}
\begin{axis}[
  width=7.4cm, height=5.6cm,
  xlabel={$\gamma$}, ylabel={$C(e_z,\gamma)$},
  xmin=0, xmax=1, ymin=-0.03, ymax=1.05,
  title={\small (a) vertex responses, $a=6$, $p=0.65$},
  legend style={at={(0.97,0.55)},anchor=east,draw=none,
                fill=none,font=\scriptsize,cells={anchor=west}},
  tick label style={font=\scriptsize},
  label style={font=\small}, every axis title/.style={above,at={(0.5,1.0)},yshift=2pt},
]
\addplot[blue,thick] coordinates {(0.020,0.5620)(0.050,0.6122)(0.080,0.6592)(0.110,0.7028)(0.140,0.7427)(0.170,0.7789)(0.200,0.8115)(0.230,0.8404)(0.260,0.8659)(0.290,0.8882)(0.320,0.9075)(0.350,0.9241)(0.380,0.9383)(0.410,0.9503)(0.440,0.9604)(0.470,0.9688)(0.500,0.9757)(0.530,0.9814)(0.560,0.9859)(0.590,0.9895)(0.620,0.9924)(0.650,0.9946)(0.680,0.9963)(0.710,0.9975)(0.740,0.9984)(0.770,0.9990)(0.800,0.9995)(0.830,0.9997)(0.860,0.9999)(0.890,1.0000)(0.920,1.0000)(0.950,1.0000)(0.980,1.0000)};
\addlegendentry{$z=1$}
\addplot[blue!55,thick] coordinates {(0.020,0.2869)(0.050,0.3095)(0.080,0.3327)(0.110,0.3566)(0.140,0.3810)(0.170,0.4061)(0.200,0.4318)(0.230,0.4581)(0.260,0.4848)(0.290,0.5120)(0.320,0.5396)(0.350,0.5675)(0.380,0.5956)(0.410,0.6239)(0.440,0.6523)(0.470,0.6806)(0.500,0.7087)(0.530,0.7365)(0.560,0.7637)(0.590,0.7904)(0.620,0.8162)(0.650,0.8410)(0.680,0.8647)(0.710,0.8870)(0.740,0.9077)(0.770,0.9268)(0.800,0.9440)(0.830,0.9591)(0.860,0.9720)(0.890,0.9826)(0.920,0.9907)(0.950,0.9964)(0.980,0.9994)};
\addlegendentry{$z=2$}
\addplot[black,very thick,dashed] coordinates {(0.020,0.1350)(0.500,0.1350)(0.980,0.1350)};
\addlegendentry{$z=3$ (neutral)}
\addplot[red!55,thick] coordinates {(0.020,0.0571)(0.050,0.0516)(0.080,0.0466)(0.110,0.0421)(0.140,0.0381)(0.170,0.0344)(0.200,0.0311)(0.230,0.0280)(0.260,0.0252)(0.290,0.0227)(0.320,0.0204)(0.350,0.0182)(0.380,0.0163)(0.410,0.0145)(0.440,0.0128)(0.470,0.0113)(0.500,0.0099)(0.530,0.0086)(0.560,0.0075)(0.590,0.0064)(0.620,0.0055)(0.650,0.0046)(0.680,0.0038)(0.710,0.0031)(0.740,0.0025)(0.770,0.0019)(0.800,0.0014)(0.830,0.0010)(0.860,0.0007)(0.890,0.0004)(0.920,0.0002)(0.950,0.0001)(0.980,0.0000)};
\addlegendentry{$z=4$}
\addplot[red,thick] coordinates {(0.020,0.0186)(0.050,0.0152)(0.080,0.0124)(0.110,0.0102)(0.140,0.0084)(0.170,0.0069)(0.200,0.0056)(0.230,0.0046)(0.260,0.0038)(0.290,0.0031)(0.320,0.0025)(0.350,0.0020)(0.380,0.0016)(0.410,0.0013)(0.440,0.0010)(0.470,0.0008)(0.500,0.0006)(0.530,0.0005)(0.560,0.0003)(0.590,0.0003)(0.620,0.0002)(0.650,0.0001)(0.680,0.0001)(0.710,0.0001)(0.740,0.0000)(0.770,0.0000)(0.800,0.0000)(0.830,0.0000)(0.860,0.0000)(0.890,0.0000)(0.920,0.0000)(0.950,0.0000)(0.980,0.0000)};
\addlegendentry{$z=5$}
\end{axis}
\begin{scope}[xshift=8.2cm]
\begin{axis}[
  width=7.4cm, height=5.6cm,
  xlabel={$\gamma$}, ylabel={$10^{3}\,C(\pi,\gamma)$},
  xmin=0, xmax=1, ymin=3.4, ymax=4.55,
  title={\small (b) a rotating distribution, $a=6$, $p=0.9$, $r=2$},
  tick label style={font=\scriptsize}, label style={font=\small},
  every axis title/.style={above,at={(0.5,1.0)},yshift=2pt},
]
\addplot[black,very thick] coordinates {(0.020,3.5497)(0.050,3.6125)(0.080,3.6739)(0.110,3.7338)(0.140,3.7920)(0.170,3.8483)(0.200,3.9027)(0.230,3.9549)(0.260,4.0047)(0.290,4.0520)(0.320,4.0967)(0.350,4.1384)(0.380,4.1771)(0.410,4.2126)(0.440,4.2446)(0.470,4.2730)(0.500,4.2976)(0.530,4.3182)(0.560,4.3346)(0.590,4.3466)(0.620,4.3540)(0.650,4.3567)(0.680,4.3545)(0.710,4.3473)(0.740,4.3349)(0.770,4.3171)(0.800,4.2939)(0.830,4.2652)(0.860,4.2308)(0.890,4.1907)(0.920,4.1448)(0.950,4.0930)(0.980,4.0355)};
\addplot[only marks,mark=*,mark size=1.6pt,black] coordinates {(0.650,4.3567)};
\node[font=\scriptsize,anchor=south] at (axis cs:0.65,4.375) {$\partial_\gamma C=0$};
\node[font=\scriptsize,blue!60!black,anchor=north] at (axis cs:0.28,4.05) {$\partial_\gamma C>0$};
\node[font=\scriptsize,red!60!black,anchor=north] at (axis cs:0.88,4.28) {$\partial_\gamma C<0$};
\end{axis}
\end{scope}
\end{tikzpicture}
\caption{(a) The sign theorem at the starting sites for $a=6$,
$p=0.65$: the two sites below the midpoint respond upwards, the two
above respond downwards, and the neutral site $z_0=3$ does not
respond at all, its curve sitting at $C^\ast=0.1350$ to machine
precision. Each orbit contributes one site to each zone, which is
the two-zone theorem of Paper~I recovered as
Corollary~\ref{cor:two-zones}. (b) With $r\ge2$ the response
direction rotates and the global picture changes: for $a=6$,
$p=0.9$ the interior distribution
$\pi=(0.0117,0.1462,0.2621,0.2558,0.3242)$ increases up to
$\gamma\approx0.65$ and decreases afterwards. Both lobes are
macroscopic; the derivative is evaluated in closed form.}
\label{fig:response}
\end{figure}
The rotation itself can be drawn, and
Figure~\ref{fig:rotation} does so for the realisation of
Figure~\ref{fig:response}(a). By Theorem~\ref{thm:fsr} every
response functional $\psi(\gamma)$ lies in the plane $V$ spanned by
the two orbit directors, so it has well-defined coordinates
$(\varphi_{O_1}(\gamma),\varphi_{O_2}(\gamma))$ in the director
basis; we computed them from the walk and checked that the
component along the neutral site and the two consistency relations
within the orbits vanish to $10^{-16}$, which is
a numerical check of Theorem~\ref{thm:fsr} on the physical object,
the theorem itself being already proved. As $\gamma$
runs from $0$ to $1$ the direction of $\psi(\gamma)$ turns towards
the director of the outer orbit $\{1,5\}$. Two angles must be
distinguished here. The directors of distinct orbits are orthogonal,
having disjoint supports, but they are not unit vectors:
$\|\mathbf d_O\|^2=\kappa(z)+\kappa(\sigma(z))$, which here gives
$\|\mathbf d_{O_1}\|=4.894$ and $\|\mathbf d_{O_2}\|=3.917$. The
\emph{coordinate} angle read off the pair
$(\varphi_{O_1},\varphi_{O_2})$ therefore differs from the Euclidean
angle in $V$. Concretely, if
$\psi=\varphi_{O_1}\mathbf d_{O_1}+\varphi_{O_2}\mathbf d_{O_2}$,
the Euclidean direction is determined by the weighted pair
\[
\bigl(\|\mathbf d_{O_1}\|\,\varphi_{O_1},\;
      \|\mathbf d_{O_2}\|\,\varphi_{O_2}\bigr),
\]
not by the raw coordinates. The coordinate angle changes from
$25.1^\circ$ to $0.5^\circ$, whereas the corresponding Euclidean
direction in $V$ rotates by $20.1^\circ$. \emph{The $25^\circ$ arc
drawn in Figure~\ref{fig:rotation} refers to the coordinate angle,
not to the Euclidean angle in $V$.} In this realisation the
turn happens to be monotone; that is an observation about the
computed curve, not a consequence of any result above, which asserts
only that the direction cannot be constant. A distribution $\pi$ pairs with this
moving direction through
$\langle\pi-\pi^\ast,\psi(\gamma)\rangle$; when $\pi$ lies in a
mixed chamber, the sweep can carry the direction across the
hyperplane orthogonal to $\pi-\pi^\ast$, and this crossing is the
reversal of Figure~\ref{fig:response}(b). Nothing else happens:
the rotation of this one direction inside a fixed
$r$-dimensional plane is the entire mechanism.

\begin{figure}[t]
\centering
\begin{tikzpicture}[scale=5.4]
  \draw[->] (-0.06,0) -- (1.12,0)
    node[right,font=\small] {$\varphi_{O_1}$\, (orbit $\{1,5\}$)};
  \draw[->] (0,-0.05) -- (0,0.56)
    node[above,font=\small] {$\varphi_{O_2}$\, (orbit $\{2,4\}$)};
  \node[font=\small,gray] at (0.94,0.44) {$V=\operatorname{span}\{\mathbf d_{O_1},\mathbf d_{O_2}\}$};
  \draw[black!45,thick]
    (0.9059,0.4235) -- (0.8231,0.3612) -- (0.7470,0.3064) -- (0.6764,0.2581)
    -- (0.6107,0.2155) -- (0.5490,0.1781) -- (0.4908,0.1451) -- (0.4356,0.1164)
    -- (0.3830,0.0913) -- (0.3325,0.0698) -- (0.2838,0.0514) -- (0.2367,0.0361)
    -- (0.1908,0.0237) -- (0.1458,0.0139) -- (0.1016,0.0068) -- (0.0579,0.0022)
    -- (0.0144,0.0001);
  \node[black!45,font=\scriptsize,rotate=33] at (0.70,0.315) {$\gamma\mapsto\psi(\gamma)$};
  \draw[->,very thick,blue!70!black]  (0,0) -- (0.5658,0.2536);
  \draw[->,very thick,blue!45!black!70] (0,0) -- (0.5881,0.1964);
  \draw[->,very thick,red!45!black!70]  (0,0) -- (0.6090,0.1162);
  \draw[->,very thick,red!70!black]   (0,0) -- (0.6198,0.0177);
  \node[blue!70!black,font=\scriptsize,anchor=south west] at (0.567,0.253) {$\gamma=0.06$};
  \node[blue!45!black!70,font=\scriptsize,anchor=west] at (0.592,0.196) {$\gamma=0.30$};
  \node[red!45!black!70,font=\scriptsize,anchor=west] at (0.612,0.116) {$\gamma=0.60$};
  \node[red!70!black,font=\scriptsize,anchor=west] at (0.623,0.018) {$\gamma=0.94$};
  \draw[->,thick] (0.30,0.0) arc (0:24.1:0.30);
  \node[font=\scriptsize] at (0.365,0.070) {$25^\circ$ (director coordinates)};
\end{tikzpicture}
\caption{The rotating orientation field for $a=6$, $p=0.65$ --- the
realisation of Figure~\ref{fig:response}(a) seen from inside the
response plane. Every $\psi(\gamma)$ lies in the plane $V$ spanned
by the two orbit directors (Theorem~\ref{thm:fsr}; checked
numerically here to $10^{-16}$ on the physical object), with
coordinates
$(\varphi_{O_1},\varphi_{O_2})$ in the director basis. The grey
curve is the trajectory of $\psi$, normalised to its largest value;
the arrows are its directions at four resetting rates. The axes are
the director coordinates, in which the sweep reads $25^\circ$; since
the directors are orthogonal but not normalised
($\|\mathbf d_{O_1}\|=4.894$, $\|\mathbf d_{O_2}\|=3.917$), the
Euclidean angle swept in $V$ is $20.1^\circ$. The turn is monotone
here and towards the outer-orbit director as $\gamma\to1$. This rotation is the
mechanism of Figure~\ref{fig:response}(b): a mixed-chamber
distribution reverses exactly when the sweeping direction crosses
the hyperplane orthogonal to $\pi-\pi^\ast$.}
\label{fig:rotation}
\end{figure}

\subsection{Summary of the verified data}
\label{subsec:table}

Table~\ref{tab:realisations} collects the structural data of the
canonical realisation for the domain sizes we have examined. The
column $r$ reports a numerical rank: the field $\psi(\gamma)$ is
sampled at $60$ equally spaced values of $\gamma\in(0.05,0.95)$, the
sampled vectors are taken as the rows of a matrix, and its rank is
read from the singular values, those exceeding
$10^{-9}\sigma_{\max}$ being counted as nonzero. No normalisation is
applied beforehand. The result agrees in every case with the exact
value $r=n_{\mathrm{pair}}$ predicted by Theorem~\ref{thm:fsr}, with
$\sigma_r/\sigma_{r+1}>10^{13}$ throughout --- $\sigma_{r+1}$
sitting at the level of machine precision, so that the count is
insensitive to the tolerance. It is equally insensitive to the
sampling: taking $40$ or $200$ points instead of $60$, widening the
interval, or normalising the rows beforehand changes neither the
rank nor the order of magnitude of the gap. The
last column records whether the vertex condition
\textup{(V)} holds at every site of every orbit across the drift
range $p\in[0.55,0.95]$; it does, in all cases examined. For $a=3$
and $a=4$ this is also proved in closed form
(Proposition~\ref{prop:walk-V}).

\begin{table}[H]
\centering
\caption{Structural data of the canonical realisation at
$p=0.65$. Here $N=a-1$ is the number of spectral modes,
$n_{\mathrm{pair}}$ the number of active orbits, $n_0\in\{0,1\}$ the
number of neutral sites, $r$ the numerically computed dimension of
the response span, and $C^\ast=1/(1+\sqrt K)$ the invariant value.
The identity $r=n_{\mathrm{pair}}$ is Theorem~\ref{thm:fsr}, proved
there; the column reports its numerical confirmation. The last
column reports the verification of \textup{(V)} across
$p\in[0.55,0.95]$.}

\vspace{0.3cm}
\label{tab:realisations}
\begin{tabular}{ccccccc}
\hline\hline
$a$ & $N$ & $n_{\mathrm{pair}}$ & $n_0$ & $r$ & $C^\ast$ & \textup{(V)} \\
\hline
$3$ & $2$ & $1$ & $0$ & $1$ & $0.2832$ & yes \\
$4$ & $3$ & $1$ & $1$ & $1$ & $0.2248$ & yes \\
$5$ & $4$ & $2$ & $0$ & $2$ & $0.1754$ & yes \\
$6$ & $5$ & $2$ & $1$ & $2$ & $0.1350$ & yes \\
$7$ & $6$ & $3$ & $0$ & $3$ & $0.1028$ & yes \\
$8$ & $7$ & $3$ & $1$ & $3$ & $0.0775$ & yes \\
\hline\hline
\end{tabular}
\end{table}

The table displays a sharp structural dichotomy: the rank is
exactly the number of active orbits --- Theorem~\ref{thm:fsr} made
visible, column by column --- while the appearance of a neutral
site records only the parity of $a$. Parity plays no further role:
it decides whether the midpoint is a site, and hence the unique
fixed point of $\sigma$, not whether the two zones exist, since the
zones are forced by the orbit structure alone. And since $r$ grows
with $a$, the realisations that are large enough to be interesting
are precisely those in which the response direction rotates. The two-site case of Paper~III,
where the whole simplex is governed by a single sign, is the
exception rather than the rule; the generic $r\ge2$ setting admits
the reversal mechanism illustrated in
Figure~\ref{fig:response}(b) --- admits, not forces, since whether a
given $\pi$ reverses depends on the chamber it occupies --- and the
systematic description of which distributions do is the subject of
the sequel.

\section{Outlook}
\label{sec:beyond}

The results of this paper describe the reset response of a spectral
duality structure through the geometry it induces on the simplex:
the separatrix $\Sigma$ as a critical manifold, the invariant value
$C^\ast$, the response span $V$ and its dimension $r$, and the
orientation field $\psi(\gamma)$ that rotates within $V$. Several of
these threads are, by their nature, the beginning of a longer story,
and we indicate here where they lead.

\subsection*{The rotating regime}

For $r=1$ the reset response is globally oriented: a single
$\gamma$-independent side of $\Sigma$ decides the sign of
$\partial_\gamma C$, and Proposition~\ref{prop:r-one} makes this a
theorem. For $r\ge2$ the response span has dimension at least two,
so the direction of $\psi(\gamma)$ cannot stay constant, and the
simplex is no longer split into two monotone regions but organised
into finer chambers whose walls the moving hyperplane
$\ker\psi(\gamma)$ sweeps out. Figure~\ref{fig:rotation} shows the
field turning; what it does not show is the combinatorial object
that records how the chambers fit together.

That object is a sign arrangement. Each site contributes a wall,
each orbit a direction in $V$, and the chambers are the cells into
which these walls cut the simplex; their incidences form a
partially ordered set graded by the invariant $r$. The two-zone
picture of Section~\ref{sec:sign-theorem} is the case where this
poset has a single generator; the rotating case is where its full
structure appears. A systematic account --- the lattice of sign
chambers, its rank, and the way $\Sigma$ sits inside it as the flat
of codimension $r$ --- is the subject of a sequel, where the
combinatorics can be developed on its own terms rather than glimpsed
through a single realisation.

\subsection*{The projective and metric structure}

The square-root embedding of Section~\ref{sec:fisher-rao} sends the
simplex to the positive octant of a sphere and the separatrix to a
totally geodesic subsphere (Theorem~\ref{thm:great-subsphere}). Two
further structures are visible from there but not pursued here.
First, two distinct involutions act on the sphere and should not be
run together. The site involution $\sigma$ acts as the reflection
exchanging the coordinates of each orbit; its fixed set in the
simplex is the symmetric family $\{\pi:\pi_z=\pi_{\sigma(z)}\}$,
which contains the neutral vertices but is larger than them, and
which is \emph{not} contained in $\Sigma$ --- indeed $\Sigma$ is not
$\sigma$-invariant unless $\kappa(z)=\kappa(\sigma(z))$. Separately,
extending the octant to the ambient sphere and passing to the
antipodal quotient $x\sim-x$ gives a projective object, and
$\Phi(\Sigma^\circ)$, being the trace of a linear subspace, does
descend to it. It is this second identification, not $\sigma$, that
produces the projective picture; what a joint treatment of the two
would look like is precisely what we have not pursued here. Second, the same square root that carries
measures to the Hilbert sphere is the one underlying the
Bhattacharyya coefficient \cite{Bhattacharyya1943} and, through it,
the fidelity of quantum information geometry \cite{Uhlmann1976}: the reset-neutral value $C^\ast$ and the
distance from $\Sigma$ acquire, in that language, a metric reading.
We have kept the present paper on the real, classical side of this
correspondence; the projective and information-geometric
development belongs elsewhere.

\subsection*{The representation as the primary object}

Finally, Remark~\ref{rem:bridge} recast the coupling functional as a
derived observable: the primary datum is the linear representation
$\mathcal R:\Delta_{m-1}\to\mathcal H$ that carries a distribution
to its spectral coordinates, and $C$ is a rational projection of it.
Read this way, a spectral duality structure is not a functional on a
simplex but a bridge between two universal geometries --- the
spectral Hilbert space of the underlying operator and the
Fisher--Rao manifold of reset distributions --- joined by a
model-dependent correspondence. The structural conditions
(S1)--(S4) become properties of that bridge. Developing this
viewpoint into a definition, and asking which pairs of geometries
arise from genuine Markov processes, is the natural continuation of
the programme and the frame in which the operator-theoretic origin
of the duality, promised in Paper~IV, is most naturally posed.

\medskip

In this perspective the invariant $r$ measures the intrinsic
complexity of the response geometry: it is at once the number of
independent response directions, the codimension of the local
separatrix, and the parameter controlling the transition from
global orientation to genuine rotation. None of the directions
sketched here is required to read the results above: this paper
stands on the four structural conditions and the geometry they
force. They indicate, rather, that the reset response geometry
studied here is one face of a larger object, and that the biased
random walk --- the model that began the programme --- is the
smallest window onto it.

\appendix
\section{Spectral realisation for the random walk}
\label{app:spectral}

Theorem~\ref{thm:fsr} was proved in the body by an exact algebraic
collapse of the orbits, a route that owes nothing to any particular
realisation. For the biased random walk the same reduction can be
obtained a second time, and from a different direction: through the
spectral decomposition of the symmetrised generator and the
resolvent that produced it. This appendix records that route. It is
worth keeping for three reasons.

First, two structurally distinct derivations of the same
finite-dimensional reduction --- the abstract one from
\textup{(S2)--(S4)}, the spectral one below for the canonical
realisation --- are evidence of its robustness. They are not
logically independent: the source \eqref{eq:xi-def}, the form
$\psi=\mathcal G(t)\xi$ and the canonical modes all come from the
realisation, and the modes are quoted from Paper~II. What differs is
the route --- orbit collapse against spectral parity --- and it is
that difference which the agreement tests.

Second, the resolvent-invariance principle the spectral route rests
on (\S\ref{subsec:resolvent-invariance}) owes nothing to the
resetting mechanism, holds for any diagonalisable operator, and may
be of independent interest.

Third, it is through the resolvent that the reduction connects with
the operator-theoretic framework of Paper~IV \cite{PaperIV}, a
connection the abstract route leaves invisible.

The orientation field $\psi(\gamma)$, a priori an infinite family
indexed by the continuous reset parameter $\gamma\in(0,1)$, spans a
subspace of \emph{finite} dimension equal to an intrinsic invariant
of the process, with all $\gamma$-dependence confined to a finite set
of scalar coefficients. The finite
reduction for our process (\S\ref{subsec:parity-count}) is the
application of that principle to the symmetrised generator, with the
count fixed by the spatial symmetry.

\subsection{Spectral directions are invariant under the resolvent}
\label{subsec:resolvent-invariance}

The first result is purely linear-algebraic --- finite dimensions
throughout, no analysis --- and we state it with its own parameter
$\tau$: in the application below $\tau$ will be the discount
$t=1-\gamma$, not the reset rate $\gamma$ itself. Let $A$ be a
diagonalisable operator on a finite-dimensional space with spectral
projectors $\{P_k\}$, $A=\sum_k\lambda_k P_k$, and let $J$ be any
nonempty open interval on which $I-\tau A$ is invertible, so that
$R(\tau)=(I-\tau A)^{-1}$ is defined there. \textup{(}Here $I$ is
the identity, as elsewhere in this paper; the interval is $J$.\textup{)}

\begin{lemma}[Resolvent invariance of spectral directions]
\label{lem:resolvent-invariance}
Assume the active eigenvalues $\{\lambda_k : P_k\xi\neq 0\}$ are
pairwise distinct. Then
\begin{equation}
\operatorname{span}\{R(\tau)\xi : \tau\in J\}
= \bigoplus_{k:\,P_k\xi\neq 0}\operatorname{span}\{P_k\xi\}.
\label{eq:resolvent-invariance}
\end{equation}
The conclusion is unchanged if $R(\tau)$ is multiplied by any
nowhere-vanishing scalar function of $\tau$, since each individual
vector $R(\tau)\xi$ is then merely rescaled by a nonzero scalar.
\end{lemma}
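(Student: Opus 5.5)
The plan is to expand the resolvent spectrally and reduce the span statement to the linear independence of finitely many scalar functions of $\tau$. Since $A=\sum_k\lambda_kP_k$ with $\sum_kP_k=I$ and $P_kP_l=\delta_{kl}P_k$, for every $\tau\in J$ I will write
\[
R(\tau)\xi=\sum_{k:\,P_k\xi\neq0}\frac{1}{1-\tau\lambda_k}\,P_k\xi ,
\]
which is legitimate because invertibility of $I-\tau A$ on $J$ means $1-\tau\lambda_k\neq0$ for every $k$. This gives the inclusion $\subseteq$ at once. It also shows that the vectors $P_k\xi$ with $P_k\xi\neq0$ are linearly independent, since they lie in distinct spectral subspaces of a direct sum, so the right-hand side of \eqref{eq:resolvent-invariance} is a genuine direct sum.

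For the reverse inclusion I will follow the annihilator argument of Lemma~\ref{lem:span}. Let $\ell$ be a linear functional vanishing on $\{R(\tau)\xi:\tau\in J\}$. Then
\[
\sum_k\frac{\ell(P_k\xi)}{1-\tau\lambda_k}=0\qquad\text{for all }\tau\in J .
\]
The key step is that the functions $g_k(\tau)=(1-\tau\lambda_k)^{-1}$, for pairwise distinct active $\lambda_k$, are linearly independent on the nonempty open interval $J$.

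To prove that independence, I will clear denominators. Multiplying the relation by $\prod_j(1-\tau\lambda_j)$ yields a polynomial that vanishes on $J$, hence identically. I then evaluate it at $\tau=1/\lambda_k$ for each $\lambda_k\neq0$: every term except the $k$-th vanishes, and the surviving product $\prod_{j\neq k}(1-\lambda_j/\lambda_k)$ is nonzero by distinctness, so $\ell(P_k\xi)=0$. At most one active eigenvalue equals $0$, since the eigenvalues are distinct. Its coefficient is then forced to vanish by the same polynomial identity, for instance by evaluating at $\tau=0$ after the other coefficients are known to vanish.

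It follows that $\ell$ annihilates every $P_k\xi$, which gives $\supseteq$. The closing remark of the lemma is immediate, because multiplying by a nonzero scalar does not change the span of any family. The only point needing care is that the evaluation points $1/\lambda_k$ may lie outside $J$; this is harmless because the polynomial identity holds on all of $\mathbb R$. I therefore expect no real obstacle beyond handling the zero eigenvalue separately.
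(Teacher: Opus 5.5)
Your proof is correct and rests on the same key step as the paper's: clearing the denominators $(1-\tau\lambda_k)$ to obtain a polynomial identity, evaluating it at $\tau=1/\lambda_k$, and treating a possible zero eigenvalue separately. The only difference is packaging. The paper samples $s$ distinct points $\tau_i\in J$, shows that the matrix $\bigl((1-\tau_i\lambda_{k_j})^{-1}\bigr)$ is invertible, and solves for each $P_{k_j}\xi$ explicitly, whereas you run the annihilator argument of Lemma~\ref{lem:span} and also check that the sum on the right is direct, which the paper leaves implicit.
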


\begin{proof}
From $R(\tau)=\sum_k(1-\tau\lambda_k)^{-1}P_k$ one has
$R(\tau)\xi=\sum_k (1-\tau\lambda_k)^{-1}P_k\xi$, so the left side
is contained in the right. For the reverse inclusion, let
$k_1,\dots,k_s$ index the active projectors ($P_{k_j}\xi\neq0$) with
distinct $\lambda_{k_j}$. Choose distinct
$\tau_1,\dots,\tau_s\in J$ and form the matrix
$M_{ij}=(1-\tau_i\lambda_{k_j})^{-1}$, which is well defined since
$\tau_i\in J$. We claim $M$ is invertible. Suppose
$\sum_j c_j M_{ij}=0$ for every $i$, and put
\[
Q(\tau):=\sum_j c_j\!\!\prod_{j'\neq j}\!\bigl(1-\tau\lambda_{k_{j'}}\bigr),
\]
a polynomial of degree at most $s-1$ obtained by clearing
denominators. It vanishes at the $s$ distinct points
$\tau_1,\dots,\tau_s$, hence $Q\equiv0$. Evaluating at
$\tau=1/\lambda_{k_j}$ for any $\lambda_{k_j}\neq0$ kills every term
but the $j$th and leaves
$c_j\prod_{j'\neq j}(1-\lambda_{k_{j'}}/\lambda_{k_j})=0$, whose
product is nonzero because the $\lambda_{k_{j'}}$ are distinct; so
$c_j=0$. At most one $\lambda_{k_j}$ vanishes, and for that index
$Q\equiv0$ then reads $c_j\prod_{j'\neq j}(1-\tau\lambda_{k_{j'}})
\equiv0$, forcing $c_j=0$ as well. Hence $M$ is invertible, and the
linear system
$R(\tau_i)\xi=\sum_j M_{ij}\,P_{k_j}\xi$ can be solved for each
$P_{k_j}\xi$ as a linear combination of $\{R(\tau_i)\xi\}_i$.
Therefore every $P_{k_j}\xi$ lies in
$\operatorname{span}\{R(\tau)\xi\}$, and both spans coincide. For
the final sentence, replacing $R(\tau)\xi$ by $c(\tau)R(\tau)\xi$
with $c(\tau)\neq0$ replaces each member of the spanning family by a
nonzero multiple of itself, which leaves the span unchanged.
\end{proof}

The content is that \emph{the resolvent creates no new spectral
directions}: it only rescales each spectral projection of $\xi$ by the
scalar $(1-\tau\lambda_k)^{-1}$. Since the projectors are fixed, the
only directions available are those already present in the spectral
decomposition of $\xi$. It is for this reason that the reduction is
insensitive to how the discount is parametrised: reparametrising the
interval $J$ bijectively, or multiplying
$R$ by a nonvanishing scalar, changes neither side of
\eqref{eq:resolvent-invariance}. This is the feature of the resolvent structure
of Paper~IV that the present paper makes visible; it holds for any
diagonalisable operator and does not refer to resetting. In the reset
model the eigenvalues $\lambda_k=2\sqrt{pq}\cos(\pi k/a)$ are simple
(the cosine is injective on $(0,\pi)$), so the distinctness hypothesis
is automatic.

\subsection{The reduction space and its dimension}
\label{subsec:parity-count}

We now specialise to the reset problem. Recall from
Section~\ref{sec:inherited} that the orientation field is
\begin{equation}
\psi(\gamma) = \mathcal G(t)\,\xi,
\qquad
\mathcal G(t):=t\,(I-tP)^{-1},
\qquad
t:=1-\gamma,
\label{eq:psi-def-app}
\end{equation}
where $P$ is the interior generator and $t$ is the per-step
survival factor, so that
$u(\cdot;\gamma)=\mathcal G(t)b_0$ and
$s(\cdot;\gamma)=\mathcal G(t)(b_0+b_a)$, in agreement with the
spectral scalars $f_\nu(\gamma)=t/(1-\lambda_\nu t)$ used throughout
the body. We write $S$ for the spatial reflection $(Sv)_z=v_{a-z}$
--- reserving $\mathcal R$ for the representation map of
Section~\ref{sec:inherited}, with which it has nothing to do ---
and
\begin{equation}
\xi = b_0 - C^\ast(b_0+b_a),
\quad (b_0)_z=q\,\delta_{z,1},\ (b_a)_z=p\,\delta_{z,a-1}.
\label{eq:xi-def}
\end{equation}
Set $V := \operatorname{span}\{\psi(\gamma):\gamma\in(0,1)\}$. Note
that $\psi$ is \emph{not} defined by antisymmetrising: no projector
is applied to $\mathcal G(t)\xi$. That the field turns out to be
antisymmetric --- in the symmetrised coordinates introduced below,
not in the original ones --- is a consequence of
Lemma~\ref{lem:parity} together with $[\widetilde P,S]=0$, and is
the substance of Theorem~\ref{thm:fsr-spectral} rather than an
assumption of it.

For the biased walk $P$ is not symmetric but is symmetrisable: with
$D=\operatorname{diag}(\sqrt{\kappa(z)})$, $\kappa(z)=(p/q)^{a-z}$, the
operator $\widetilde P=D^{-1}PD$ is symmetric.

\begin{lemma}
\label{lem:sym}
$\widetilde P$ is the symmetric tridiagonal matrix with constant
off-diagonal $\sqrt{pq}$; it commutes with $S$. Its eigenpairs are
\begin{equation}
v_k(z)=\sin(\pi k z/a),\qquad
\lambda_k = 2\sqrt{pq}\,\cos(\pi k/a),\qquad k=1,\dots,a-1,
\label{eq:eigenpairs}
\end{equation}
and each eigenvector has definite parity under the reflection,
\begin{equation}
S v_k = (-1)^{k+1} v_k,
\label{eq:eigen-parity}
\end{equation}
so $v_k$ is odd for $k$ even and even for $k$ odd.
\end{lemma}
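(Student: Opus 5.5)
The plan is to compute $\widetilde P=D^{-1}PD$ entrywise, then verify the eigenpairs with the standard sine ansatz, and finally read off the parity from a trigonometric identity. The interior generator of the biased walk on $\{1,\dots,a-1\}$ with absorbing endpoints has $P_{z,z+1}=p$, $P_{z,z-1}=q$, and zero elsewhere. The conjugation gives
\[
\widetilde P_{z,z\pm1}=\kappa(z)^{-1/2}\,P_{z,z\pm1}\,\kappa(z\pm1)^{1/2}.
\]
With $\kappa(z)=(p/q)^{a-z}$ one has $\kappa(z+1)/\kappa(z)=q/p$. Hence $\widetilde P_{z,z+1}=p\sqrt{q/p}=\sqrt{pq}$ and $\widetilde P_{z,z-1}=q\sqrt{p/q}=\sqrt{pq}$. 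So $\widetilde P$ is tridiagonal with zero diagonal and constant off-diagonal $\sqrt{pq}$, and it is symmetric. The direction convention (whether $p$ is the step towards $a$) is the one point I would check against Paper~II. If it is reversed, the same computation goes through with $\kappa$ and $\kappa^{-1}$ exchanged, and the conclusion is unchanged.

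\textbf{Commutation with $S$.} Since $(S\widetilde PS)_{z,w}=\widetilde P_{a-z,a-w}$, it suffices to note that the entries depend only on $|z-w|$. A constant-off-diagonal tridiagonal matrix is persymmetric, so $S\widetilde P S=\widetilde P$.

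\textbf{Eigenpairs.} For $v_k(z)=\sin(\pi kz/a)$, the identity $\sin(\theta(z+1))+\sin(\theta(z-1))=2\cos\theta\,\sin(\theta z)$ with $\theta=\pi k/a$ gives $(\widetilde Pv_k)_z=2\sqrt{pq}\cos\theta\,v_k(z)$ at every interior $z$. At the ends $z=1$ and $z=a-1$, the missing neighbour terms correspond to $v_k(0)=\sin 0=0$ and $v_k(a)=\sin(\pi k)=0$, so the boundary rows need no separate treatment. This is the only step needing care. For $k=1,\dots,a-1$ the vectors are nonzero, and they are distinct eigenvectors because $\cos$ is injective on $(0,\pi)$. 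This gives $a-1$ eigenpairs, which is the full spectrum.

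\textbf{Parity.} We have
\[
(Sv_k)(z)=\sin\bigl(\pi k(a-z)/a\bigr)=\sin\bigl(\pi k-\pi kz/a\bigr)=-\cos(\pi k)\sin(\pi kz/a)=(-1)^{k+1}v_k(z).
\]
Thus $v_k$ is even for $k$ odd and odd for $k$ even. Consistency with $[\widetilde P,S]=0$ is automatic, since each eigenspace is one-dimensional and is therefore $S$-invariant.
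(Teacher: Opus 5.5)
Your proof is correct and is essentially the paper's argument: the entrywise identity $P_{z,z+1}\sqrt{\kappa(z+1)/\kappa(z)}=\sqrt{pq}=P_{z+1,z}\sqrt{\kappa(z)/\kappa(z+1)}$, the classical sine ansatz (which you verify explicitly and the paper only cites), and $\sin(\pi k-x)=(-1)^{k+1}\sin x$ for the parity. There are two small differences: the paper deduces $[\widetilde P,S]=0$ from the definite parity of the eigenbasis, whereas you get it directly from the entries depending only on $|z-w|$; and your hedge about the step convention is unnecessary, and not quite right as stated, since the paper reserves $q$ for the leftward step (so $P_{z,z+1}=p$), and with $D=\operatorname{diag}(\sqrt{\kappa(z)})$ fixed the reversed convention would not be symmetrised at all.
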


\begin{proof}
Symmetry and the tridiagonal form are the identity
\[
P_{z,z+1}\sqrt{\kappa_{z+1}/\kappa_z}=\sqrt{pq}
=P_{z+1,z}\sqrt{\kappa_z/\kappa_{z+1}} ;
\]
\eqref{eq:eigenpairs} is classical. For \eqref{eq:eigen-parity},
$v_k(a-z)=\sin(\pi k-\pi kz/a)=(-1)^{k+1}\sin(\pi kz/a)$, and
$[\widetilde P,S]=0$ follows.
\end{proof}

\subsection*{The canonical realisation: modes and duality}

The eigenvectors $v_k$ of $\widetilde P$ are not themselves the
spectral modes of the walk: the modes carry the Doob weight
restored by the conjugation $D$. Explicitly, for the biased walk
with multi-site geometric resetting the discounted absorption
functionals admit the finite decompositions
$u(z;\gamma)=\sum_\nu f_\nu(\gamma)A_\nu(z)$ and
$s(z;\gamma)=\sum_\nu f_\nu(\gamma)\bigl(A_\nu+B_\nu\bigr)(z)$ with
$f_\nu(\gamma)=(1-\gamma)/(1-\lambda_\nu(1-\gamma))$ and
\begin{equation}
A_\nu(z)=w_\nu\Bigl(\frac qp\Bigr)^{z/2}\sin\frac{\pi\nu z}{a},
\qquad
B_\nu(z)=w_\nu\Bigl(\frac pq\Bigr)^{(a-z)/2}
\sin\frac{\pi\nu(a-z)}{a},
\qquad
w_\nu=\frac{2\sqrt{pq}}{a}\,\sin\frac{\pi\nu}{a},
\label{eq:canonical-modes}
\end{equation}
as established in Paper~II. The Doob factors $(q/p)^{z/2}$ and
$(p/q)^{(a-z)/2}$ encode the bias asymmetry, and the modal weights
$w_\nu$ are independent of the site.

\begin{lemma}[Modes, symmetrised eigenvectors, and duality]
\label{lem:canonical-modes}
With $\kappa(z)=(p/q)^{a-z}$, $K=(p/q)^a$ and $\sigma(z)=a-z$:
\begin{enumerate}
\item[(i)] $A_\nu=(w_\nu/\sqrt K)\,D\,v_\nu$; that is, the modes are
the eigenvectors of $P$ obtained by undoing the symmetrising
conjugation, up to the mode-dependent normalisation
$w_\nu/\sqrt K$, which is independent of the site.
\item[(ii)] $B_\nu(z)=\kappa(z)\,A_\nu(\sigma(z))$ for all
$\nu$ and $z$, so \textup{(S3)} holds with $\nu$-independent
weights $\kappa$; and $\kappa(z)\kappa(\sigma(z))=K$, so
\textup{(S4)} holds.
\item[(iii)] $w_\nu=w_{a-\nu}$, and consequently the modes satisfy
the reflection identity
\begin{equation}
A_{a-\nu}(z)=(-1)^{z+1}A_\nu(z).
\label{eq:mode-reflection}
\end{equation}
\end{enumerate}
\end{lemma}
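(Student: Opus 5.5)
The three items are direct computations from the explicit formulas \eqref{eq:canonical-modes}, \eqref{eq:eigenpairs} and $D=\operatorname{diag}(\sqrt{\kappa(z)})$. I will take them in the order (i), (ii), (iii), since each one only rewrites the factors $\sin$, $(q/p)^{z/2}$ and $w_\nu$.

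For (i), I compare $A_\nu(z)=w_\nu(q/p)^{z/2}\sin(\pi\nu z/a)$ with $(D v_\nu)(z)=(p/q)^{(a-z)/2}\sin(\pi\nu z/a)$. The ratio is
\[
\frac{(q/p)^{z/2}}{(p/q)^{(a-z)/2}}=(p/q)^{-z/2-(a-z)/2}=(p/q)^{-a/2}=K^{-1/2}.
\]
This gives $A_\nu=(w_\nu/\sqrt K)\,Dv_\nu$, with a normalisation that does not depend on $z$. It also confirms that $Dv_\nu$ is an eigenvector of $P=D\widetilde PD^{-1}$, by Lemma~\ref{lem:sym}.

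For (ii), I evaluate $A_\nu(\sigma(z))=A_\nu(a-z)=w_\nu(q/p)^{(a-z)/2}\sin(\pi\nu(a-z)/a)$. Multiplying by $\kappa(z)=(p/q)^{a-z}$ gives
\[
w_\nu(p/q)^{(a-z)/2}\sin\bigl(\pi\nu(a-z)/a\bigr)=B_\nu(z),
\]
exactly. The factor $\kappa(z)$ carries no $\nu$, so (S3) holds with $\nu$-independent weights. For (S4) I compute $\kappa(z)\kappa(a-z)=(p/q)^{(a-z)+z}=(p/q)^a=K$, the same constant for every $z$. Positivity of $\kappa$ is immediate.

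For (iii), the identity $w_{a-\nu}=w_\nu$ follows from $\sin(\pi(a-\nu)/a)=\sin(\pi-\pi\nu/a)=\sin(\pi\nu/a)$. For the reflection identity, I expand $\sin(\pi(a-\nu)z/a)=\sin(\pi z-\pi\nu z/a)$. For integer $z$, the subtraction formula together with $\sin\pi z=0$ and $\cos\pi z=(-1)^z$ gives $(-1)^{z+1}\sin(\pi\nu z/a)$. The Doob factor $(q/p)^{z/2}$ does not depend on $\nu$, so $A_{a-\nu}(z)=(-1)^{z+1}A_\nu(z)$.

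The only point needing care is this last step. The sign must come out as $(-1)^{z+1}$ and not $(-1)^z$, and it relies on $z$ being an integer site. I will also keep track of the fact that the parity is in the \emph{site} variable here, whereas \eqref{eq:eigen-parity} is a parity in the mode index under the spatial reflection $S$. The rest is bookkeeping of exponents.
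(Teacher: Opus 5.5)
Your proposal is correct and follows the paper's proof essentially step for step. It treats (i) and (ii) as exponent bookkeeping on the Doob factors, $(q/p)^{z/2}=\sqrt{\kappa(z)}/\sqrt K$ and $\kappa(z)(q/p)^{(a-z)/2}=(p/q)^{(a-z)/2}$, and (iii) via $w_{a-\nu}=w_\nu$ together with $\sin(\pi z-x)=(-1)^{z+1}\sin x$ for integer $z$, exactly as the paper does. Your closing aside describes the two parities by which variable carries the sign, whereas the paper's remark describes them by which variable is reflected; the two descriptions are consistent with each other.
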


\begin{proof}
(i) Since $\sqrt{\kappa(z)}=(p/q)^{(a-z)/2}=\sqrt K\,(q/p)^{z/2}$,
we have $(q/p)^{z/2}=\sqrt{\kappa(z)}/\sqrt K$, and
\eqref{eq:canonical-modes} reads
$A_\nu(z)=(w_\nu/\sqrt K)\sqrt{\kappa(z)}\,v_\nu(z)$.
(ii) $\kappa(z)A_\nu(a-z)
=(p/q)^{a-z}w_\nu(q/p)^{(a-z)/2}\sin(\pi\nu(a-z)/a)
=w_\nu(p/q)^{(a-z)/2}\sin(\pi\nu(a-z)/a)=B_\nu(z)$; the second
claim is $(p/q)^{a-z}(p/q)^{z}=(p/q)^a$.
(iii) $w_{a-\nu}\propto\sin(\pi(a-\nu)/a)=\sin(\pi\nu/a)$. Hence
$A_{a-\nu}(z)=w_\nu(q/p)^{z/2}\sin(\pi z-\pi\nu z/a)
=(-1)^{z+1}A_\nu(z)$, using $\sin(\pi z-x)=(-1)^{z+1}\sin x$ for
integer $z$.
\end{proof}

\begin{remark}
The reflection identity \eqref{eq:mode-reflection} is the
mechanism behind the closed forms of
Section~\ref{sec:sign-theorem}: at a site of an active orbit it
pairs the modes $\nu$ and $a-\nu$, and the symmetry $w_\nu=w_{a-\nu}$
of the modal weights is what makes the pairing exact. It should not
be confused with the parity \eqref{eq:eigen-parity} of the
symmetrised eigenvectors, which reflects the \emph{site} $z\mapsto
a-z$ rather than the \emph{mode} $\nu\mapsto a-\nu$; both hold, and
\textup{(i)} above converts either into the other.
\end{remark}

Work henceforth in symmetrised coordinates,
$\widetilde\psi=D^{-1}\psi$, $\widetilde\xi=D^{-1}\xi$, so that
\[
\widetilde\psi(\gamma)=t\,(I-t\widetilde P)^{-1}\widetilde\xi ,
\]
and $\widetilde P=\sum_k\lambda_k P_k$ is a genuine spectral
resolution with orthogonal projectors
$P_k=(2/a)\,v_kv_k^{\!\top}$, normalised by
$\|v_k\|^2=a/2$. Note that $\psi$ itself is antisymmetric only when
$\kappa(z)=\kappa(\sigma(z))$, that is for the unbiased walk; it is
$\widetilde\psi$ that is antisymmetric for every drift.

\begin{lemma}[Parity of the source]
\label{lem:parity}
$S\,\widetilde\xi=-\widetilde\xi$: the symmetrised source is odd.
\end{lemma}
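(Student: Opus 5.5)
The plan is a direct computation. We have $\widetilde\xi=D^{-1}\xi$ with $\xi=b_0-C^\ast(b_0+b_a)$, supported only on the two sites $z=1$ and $z=a-1$. These two sites form one orbit of $\sigma(z)=a-z$, and $S$ swaps them. So it suffices to check the single identity $\widetilde\xi_{a-1}=-\widetilde\xi_1$. All other components vanish on both sides. The orbit $\{1,a-1\}$ is genuine for $a\ge3$, which is the range of the canonical realisation.

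First I write out the two components. From \eqref{eq:xi-def}, $\xi_1=q(1-C^\ast)$ and $\xi_{a-1}=-C^\ast p$. Dividing by $\sqrt{\kappa(z)}$ with $\kappa(z)=(p/q)^{a-z}$ gives
\[
\widetilde\xi_1=\frac{q(1-C^\ast)}{(p/q)^{(a-1)/2}},\qquad
\widetilde\xi_{a-1}=-\frac{C^\ast p}{(p/q)^{1/2}}=-C^\ast\sqrt{pq}.
\]

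Next I use the identity $1-C^\ast=\sqrt K\,C^\ast$ (recorded in the proof of Lemma~\ref{lem:orbit-collapse}) together with $\sqrt K=(p/q)^{a/2}$. This gives
\[
\widetilde\xi_1=qC^\ast(p/q)^{a/2-(a-1)/2}=C^\ast q(p/q)^{1/2}=C^\ast\sqrt{pq}=-\widetilde\xi_{a-1},
\]
which is the claim.

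Structurally this is just the orbit collapse in disguise. On the orbit, $\xi$ is $M_O$ applied to the boundary data, so it lies in $\operatorname{im}M_O=\mathbb R\,\mathbf d_O$. Since $D^{-1}\mathbf d_O=(1,-1)$ (Remark~\ref{rem:sectors}(ii)), it follows that $D^{-1}\xi\in E^-$. I would note this as a cross-check rather than rely on it, because identifying $b_0,b_a$ with the abstract $A,B$ data would need extra bookkeeping.

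There is no real obstacle here. The only care needed is in the exponents of $\kappa$ at $z=1$ versus $z=a-1$, and in the fact that $\sqrt K$ converts $1-C^\ast$ exactly into the needed power of $p/q$.
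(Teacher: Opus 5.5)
Your computation is correct and is essentially the paper's proof: both reduce oddness to the single identity $\widetilde\xi_{a-1}=-\widetilde\xi_1$ on the orbit $\{1,a-1\}$ and verify it using $\sqrt K=(p/q)^{a/2}$ and the exponents of $\kappa$ at $z=1$ and $z=a-1$. The only cosmetic difference is that you use $1-C^\ast=\sqrt K\,C^\ast$ where the paper substitutes $C^\ast=1/(1+\sqrt K)$ directly, and your orbit-collapse cross-check is also sound, because the bookkeeping it needs is only $b_a(z)=\kappa(z)\,b_0(\sigma(z))$ (e.g.\ $(b_a)_{a-1}=\kappa(a-1)(b_0)_1=p$).
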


\begin{proof}
Only $z=1$ and $z=a-1=\sigma(1)$ carry source. With
$C^\ast=1/(1+\sqrt K)$, $K=(p/q)^a$,
\[
\xi_1=q(1-C^\ast)=\frac{q\sqrt K}{1+\sqrt K},\qquad
\xi_{a-1}=-C^\ast p=\frac{-p}{1+\sqrt K}.
\]
Oddness means $\xi_{a-z}/\sqrt{\kappa(a-z)}=-\xi_z/\sqrt{\kappa(z)}$;
the only nontrivial instance is $z=1$. With $\kappa(1)=(p/q)^{a-1}$,
$\kappa(a-1)=p/q$ and $\sqrt K=(p/q)^{a/2}$,
\[
\frac{\xi_{a-1}}{\sqrt{\kappa(a-1)}}=\frac{-\sqrt{pq}}{1+\sqrt K}
=-\frac{\xi_1}{\sqrt{\kappa(1)}},
\]
using $\sqrt K\,(q/p)^{(a-1)/2}=(p/q)^{1/2}$.
\end{proof}

\begin{lemma}[Activation of the odd spectral sector]
\label{lem:activation}
Every odd eigenmode of $\widetilde P$ has nonzero overlap with the
symmetrised source:
\begin{equation}
\langle\widetilde\xi, v_k\rangle = 2c\,\sin(\pi k/a),
\qquad k \text{ even},
\qquad
c = \frac{\sqrt p\,q^{(a+1)/2}}{p^{a/2}+q^{a/2}} > 0.
\label{eq:activation}
\end{equation}
Consequently every odd spectral projector is active,
$P_k\widetilde\xi\neq 0$ for all $k$ even.
\end{lemma}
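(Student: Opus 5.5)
The plan is a direct evaluation: the symmetrised source $\widetilde\xi=D^{-1}\xi$ is supported on only two sites, so the overlap with each $v_k$ is a two-term sum. I will first read off $\widetilde\xi$ from the computation already done in the proof of Lemma~\ref{lem:parity}. That proof gives
\[
\widetilde\xi_{a-1}=\frac{\xi_{a-1}}{\sqrt{\kappa(a-1)}}=\frac{-\sqrt{pq}}{1+\sqrt K},
\]
and by the oddness $S\widetilde\xi=-\widetilde\xi$ it follows that $\widetilde\xi_1=\sqrt{pq}/(1+\sqrt K)$. All other components vanish by \eqref{eq:xi-def}.

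Next I will pair this with $v_k(z)=\sin(\pi kz/a)$ from Lemma~\ref{lem:sym}, which gives
\[
\langle\widetilde\xi,v_k\rangle=\widetilde\xi_1\bigl[\sin(\pi k/a)-\sin(\pi k(a-1)/a)\bigr].
\]
The identity $\sin(\pi k-x)=(-1)^{k+1}\sin x$, used in the proof of \eqref{eq:eigen-parity}, gives $\sin(\pi k(a-1)/a)=-\sin(\pi k/a)$ for $k$ even. The bracket is therefore $2\sin(\pi k/a)$, and $\langle\widetilde\xi,v_k\rangle=2\widetilde\xi_1\sin(\pi k/a)$. (For $k$ odd the two terms cancel instead. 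This is the expected parity selection, since an odd vector is orthogonal to even eigenvectors, but it is not needed for the statement.)

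It remains to identify the constant. Substituting $\sqrt K=(p/q)^{a/2}$ and multiplying numerator and denominator by $q^{a/2}$ gives
\[
\widetilde\xi_1=\frac{\sqrt{pq}\,q^{a/2}}{q^{a/2}+p^{a/2}}=\frac{\sqrt p\,q^{(a+1)/2}}{p^{a/2}+q^{a/2}}=c>0,
\]
which is \eqref{eq:activation}.

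For the consequence, note that $1\le k\le a-1$ forces $\pi k/a\in(0,\pi)$, so $\sin(\pi k/a)>0$ and the overlap is strictly positive. Since $P_k=(2/a)v_kv_k^{\!\top}$, we have $P_k\widetilde\xi=(2/a)\langle\widetilde\xi,v_k\rangle v_k$, and $v_k\neq0$. Hence $P_k\widetilde\xi\neq0$ for every even $k$.

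No step is a genuine obstacle. The only point needing care is the sign bookkeeping: I must use the symmetrised components of the source rather than the raw $\xi_z$, and the reflection identity for the eigenvectors rather than the mode-reflection identity \eqref{eq:mode-reflection}. Confusing either pair would flip a sign or introduce spurious Doob factors.
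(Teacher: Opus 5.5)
Your proposal is correct and follows the paper's proof essentially line for line. Like the paper, you read off $\widetilde\xi_1=-\widetilde\xi_{a-1}=c$ from Lemma~\ref{lem:parity}, reduce the overlap to $c\,[\sin(\pi k/a)-\sin(\pi k(a-1)/a)]$, and conclude with $\sin(\pi k-x)=(-1)^{k+1}\sin x$ and $\sin(\pi k/a)>0$. Your only additions are two steps the paper leaves implicit: the algebra reducing $\sqrt{pq}/(1+\sqrt K)$ to the stated $c$, and the check $P_k\widetilde\xi=(2/a)\langle\widetilde\xi,v_k\rangle v_k\neq0$.
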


\begin{proof}
By Lemma~\ref{lem:parity} the source is supported on $\{1,a-1\}$ with
$\widetilde\xi_1=-\widetilde\xi_{a-1}=c$, and
$c=\xi_1/\sqrt{\kappa(1)}
=\sqrt p\,q^{(a+1)/2}/(p^{a/2}+q^{a/2})>0$. Using
$\sin(\pi k(a-1)/a)=-(-1)^k\sin(\pi k/a)$,
\[
\langle\widetilde\xi, v_k\rangle
= c\,[\sin(\tfrac{\pi k}{a})-\sin(\tfrac{\pi k(a-1)}{a})]
= c\,[1+(-1)^k]\sin(\tfrac{\pi k}{a}),
\]
which equals $2c\sin(\pi k/a)$ for $k$ even and $0$ for $k$ odd.
Since $0<k/a<1$, $\sin(\pi k/a)>0$, so the overlap is nonzero for
every even $k$.
\end{proof}

Let $E_{\mathrm{odd}}=\operatorname{span}\{v_k : k \text{ even}\}$ be
the odd sector of the reflection $S$, of dimension
$r=\lfloor(a-1)/2\rfloor$.

\begin{theorem}[Finite spectral reduction]
\label{thm:fsr-spectral}
The reduction space, \emph{in symmetrised coordinates}, is exactly
the odd sector:
\begin{equation}
\widetilde V:=D^{-1}V = E_{\mathrm{odd}},
\qquad\text{equivalently}\qquad
V = D\,E_{\mathrm{odd}} .
\label{eq:V-eq-Eodd}
\end{equation}
The distinction matters: $D$ does not commute with $S$ unless
$p=q$, so $D\,E_{\mathrm{odd}}\neq E_{\mathrm{odd}}$ for a biased
walk --- consistently with the fact, noted above, that it is
$\widetilde\psi$ and not $\psi$ that is antisymmetric. The space
admits the canonical decomposition
\begin{equation}
\psi(\gamma)=\sum_{k\ \mathrm{even}} f_k(\gamma)\,\mathbf m_k,
\qquad
\mathbf m_k := D\,P_k\,D^{-1}\xi = D\,P_k\,\widetilde\xi,
\qquad
f_k(\gamma)=\frac{t}{1-\lambda_k t},
\quad t=1-\gamma .
\label{eq:fsr}
\end{equation}
The directions $\mathbf m_k$ are nonzero and linearly independent.
As for canonicity, the symmetrised directions $P_k\widetilde\xi$ are
determined by $(\widetilde P,S,\widetilde\xi)$; their
original-coordinate images are determined by $(P,S,\xi)$ alone. For
the present irreducible tridiagonal $P$ the positive diagonal
symmetriser is unique up to an overall scalar --- the ratios
$D_{z+1}/D_z$ being fixed by the neighbour relations --- and the
conjugation $D P_k D^{-1}$ is unchanged under $D\mapsto cD$; so no
normalisation of the symmetriser need be fixed. In particular $\dim V = r$, and all $\gamma$-dependence is
carried by the elementary scalars $f_k$.

The notation is deliberate: these $\mathbf m_k$ are \emph{not} the
normal directions $\mathbf n_\nu$ of Section~\ref{sec:finite-reduction}.
There are $N$ of the latter, one per spectral mode, spanning a space
of dimension $r$ and therefore dependent; there are exactly $r$ of the
former, indexed by the even $k$ alone, and they are independent. Both
families span $V$, which is all the two routes are claimed to share.
\end{theorem}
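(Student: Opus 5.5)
The proof runs in symmetrised coordinates and rests on Lemma~\ref{lem:resolvent-invariance}. Since $\widetilde\psi(\gamma)=t\,(I-t\widetilde P)^{-1}\widetilde\xi$ with $t=1-\gamma\in(0,1)$, I take $J=(0,1)$. On $J$ the matrix $I-t\widetilde P$ is invertible, because $|\lambda_k|\le 2\sqrt{pq}\le1$ with strict inequality $|\lambda_k|<1$. The prefactor $t$ never vanishes on $J$. Lemma~\ref{lem:resolvent-invariance} therefore applies, the distinctness hypothesis being automatic since the $\lambda_k$ are simple. It gives
\[
\widetilde V=\bigoplus_{k:\,P_k\widetilde\xi\neq0}\operatorname{span}\{P_k\widetilde\xi\}.
\]

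The next step identifies the active index set. By Lemma~\ref{lem:activation}, $\langle\widetilde\xi,v_k\rangle$ vanishes for $k$ odd and is nonzero for $k$ even. Since $P_k\widetilde\xi=(2/a)\langle\widetilde\xi,v_k\rangle v_k$, the active projectors are exactly those with $k$ even, and each $P_k\widetilde\xi$ is a nonzero multiple of $v_k$. Hence $\widetilde V=\operatorname{span}\{v_k:k\ \text{even}\}=E_{\mathrm{odd}}$. Applying the invertible map $D$ gives $V=DE_{\mathrm{odd}}$. The $v_k$ are orthogonal eigenvectors, so the vectors $P_k\widetilde\xi$ for $k$ even are nonzero and independent, and their images $\mathbf m_k=DP_k\widetilde\xi$ remain so under $D$. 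This yields $\dim V=\#\{k\in\{1,\dots,a-1\}\ \text{even}\}=\lfloor(a-1)/2\rfloor=r$. The decomposition \eqref{eq:fsr} follows by expanding $t(I-t\widetilde P)^{-1}=\sum_k t(1-\lambda_kt)^{-1}P_k$, multiplying by $D$, and discarding the odd $k$, whose terms vanish.

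The remaining assertions need checking. First, $D\,E_{\mathrm{odd}}\neq E_{\mathrm{odd}}$ for $p\neq q$. I would show this by exhibiting $Dv_2$: its coordinates at $z$ and $a-z$ are not negatives of each other, because $\sqrt{\kappa(z)}\neq\sqrt{\kappa(a-z)}$. For $a=3$ the space $E_{\mathrm{odd}}$ is a single line and the same computation applies. Second, the canonicity claim. The tridiagonal neighbour relation in Lemma~\ref{lem:sym} fixes each ratio $D_{z+1}/D_z=\sqrt{P_{z,z+1}/P_{z+1,z}}$, so $D$ is unique up to a positive scalar $c$. Under $D\mapsto cD$ we have $\widetilde P=D^{-1}PD$ unchanged, hence $P_k$ unchanged, and $DP_kD^{-1}$ is invariant. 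The remark comparing $\mathbf m_k$ with $\mathbf n_\nu$ needs only the count: there are $N=a-1$ vectors $\mathbf n_\nu$ spanning a space of dimension $r<N$ by Theorem~\ref{thm:fsr}, so they are dependent.

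The main obstacle is bookkeeping rather than substance: keeping the odd/even conventions straight. The vectors $v_k$ with $k$ even are the \emph{odd} ones by \eqref{eq:eigen-parity}, and the parity statement must be made for $\widetilde\psi$, not $\psi$. I would also cross-check $r=\lfloor(a-1)/2\rfloor$ against $n_{\mathrm{pair}}$ for $\sigma(z)=a-z$ on $\{1,\dots,a-1\}$, which gives the same count, so that the result is consistent with Theorem~\ref{thm:fsr}.
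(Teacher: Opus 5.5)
Your proposal is correct and follows essentially the paper's route: the resolvent-invariance lemma applied to $\widetilde P$ on $J=(0,1)$, the parity and activation computation identifying the even-$k$ projectors as exactly the active ones, and transport back by $D$. The paper only splits this into two inclusions, obtaining $\widetilde V\subseteq E_{\mathrm{odd}}$ directly from the oddness of $\widetilde\xi$. One small slip, harmless to the argument: symmetry of $D^{-1}PD$ gives $(D_{z+1}/D_z)^2=P_{z+1,z}/P_{z,z+1}$, the reciprocal of the ratio you wrote, though uniqueness of $D$ up to a scalar holds either way.
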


\begin{proof}
We argue in symmetrised coordinates and transport at the end. The
two inclusions are each supplied by one lemma.

First, $\widetilde V\subseteq E_{\mathrm{odd}}$. By
Lemma~\ref{lem:parity} the source $\widetilde\xi$ is odd, so
$\langle\widetilde\xi,v_k\rangle=0$ and hence
$P_k\widetilde\xi=0$ for every \emph{odd} index $k$, i.e.\ whenever
$v_k$ is even. Since
$\widetilde\psi(\gamma)=t\sum_k(1-\lambda_kt)^{-1}P_k\widetilde\xi$,
only the even indices survive and $\widetilde\psi(\gamma)\in
E_{\mathrm{odd}}$ for every $\gamma$. Note that this is a
consequence, not a construction: no antisymmetriser is applied
anywhere.

Second, $E_{\mathrm{odd}}\subseteq\widetilde V$: every such
projector is active (Lemma~\ref{lem:activation}) and the eigenvalues
are simple (Lemma~\ref{lem:sym}), so
Lemma~\ref{lem:resolvent-invariance}, applied with $A=\widetilde P$,
$\xi=\widetilde\xi$ and $J=(0,1)$ --- legitimate because
$\rho(\widetilde P)=2\sqrt{pq}\,\cos(\pi/a)<1$, since $pq\le1/4$ by
AM--GM, so that $I-t\widetilde P$ is invertible for every
$t\in(0,1)$ --- and $t$ ranges over $(0,1)$ as $\gamma$
does --- the scalar prefactor $t$ being harmless by the last clause
of that lemma --- gives
$\operatorname{span}\{\widetilde\psi(\gamma)\}
=\bigoplus_{k\ \mathrm{even}}\operatorname{span}\{P_k\widetilde\xi\}
=\bigoplus_{k\ \mathrm{even}}\operatorname{span}\{v_k\}
=E_{\mathrm{odd}}$. Hence $\widetilde V=E_{\mathrm{odd}}$ and
$\dim V=\dim\widetilde V=r$, the conjugation by $D$ being invertible.

For the decomposition, each even index contributes the single
elementary coefficient $f_k(\gamma)=t/(1-\lambda_kt)$. Passing back
via $\psi=D\widetilde\psi$ and setting
$\mathbf m_k=D\,P_k\widetilde\xi$ yields \eqref{eq:fsr}; the
$\mathbf m_k$ are images under the invertible $D$ of nonzero vectors
lying in pairwise orthogonal ranges, hence independent.
\end{proof}

\begin{remark}
The identity \eqref{eq:V-eq-Eodd} is the structural heart of the
result: in symmetrised coordinates the family
$\{\widetilde\psi(\gamma)\}$ generates \emph{exactly} the odd sector
of the reflection, no more and no less; equivalently, in the
original coordinates $V=D\,E_{\mathrm{odd}}$. The dimension count
$\dim V=r$ is an immediate corollary, the conjugation by $D$ being
invertible. The value
$r=\lfloor(a-1)/2\rfloor$ arises not from counting eigenvalue pairs but
because $\dim E_{\mathrm{odd}}=r$ and the source reaches every one of
its modes.
\end{remark}

\begin{remark}
The apparent $\{\lambda_k,-\lambda_k\}$ pairing of the spectrum is not
the organising structure: when both $\pm\lambda$ happen to be odd and
active (which occurs for some $a$), they contribute two independent
directions, each with its own elementary coefficient, not one direction
with a combined rational coefficient. The neutral eigenvalue
$\lambda=0$ (present iff $a$ is even) needs no special treatment: its
eigenvector is odd or even according to \eqref{eq:eigen-parity} and is
included in or excluded from $E_{\mathrm{odd}}$ automatically.
\end{remark}

\subsection{Positivity of the coefficients (model-specific)}
\label{subsec:positivity}

The following is a property of the reset model, logically separate from
the reduction itself.

\begin{lemma}
\label{lem:positivity}
For the biased walk, $f_k(\gamma)=t/(1-\lambda_k t)>0$ for all
$\gamma\in(0,1)$ and all $k$, where $t=1-\gamma$.
\end{lemma}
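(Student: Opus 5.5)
The claim is that $f_k(\gamma)=t/(1-\lambda_k t)$ is positive for every $k=1,\dots,a-1$ and every $\gamma\in(0,1)$. Since the quotient has a positive numerator, it suffices to show that the denominator is positive as well.

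The numerator is immediate: for $\gamma\in(0,1)$ we have $t=1-\gamma\in(0,1)$, so $t>0$.

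For the denominator I will reuse the spectral bound already invoked in the proof of Theorem~\ref{thm:fsr-spectral}. By Lemma~\ref{lem:sym}, the eigenvalues are
\[
\lambda_k=2\sqrt{pq}\,\cos(\pi k/a),\qquad k=1,\dots,a-1 .
\]
Since $|\cos(\pi k/a)|\le\cos(\pi/a)<1$ for $1\le k\le a-1$, and $\sqrt{pq}\le\tfrac12$ by AM--GM (because $p+q=1$), we get
\[
|\lambda_k|\le 2\sqrt{pq}\,\cos(\pi/a)<1 .
\]
Then, for $t\in(0,1)$,
\[
\lambda_k t\le|\lambda_k|\,t<1,
\]
so $1-\lambda_k t>0$. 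The quotient of two positive numbers is positive, which is the claim.

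The argument does not depend on the sign of $\lambda_k$. If $\lambda_k\le0$, the denominator is at least $1$. If $\lambda_k>0$, it is at least $1-\lambda_k>0$. This also fixes the phrase ``$|\lambda_\nu|<1$'' used in Remark~\ref{rem:positivity} and in (S2) for the biased walk.

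There is no genuine obstacle here. The only care needed is to state the strict inequality $|\lambda_k|<1$ uniformly in $p\in(0,1)$. At $p=q=\tfrac12$ AM--GM is attained with equality, and strictness there comes only from $\cos(\pi/a)<1$, which holds because $a\ge2$. I will note this explicitly.
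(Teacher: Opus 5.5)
Your proof is correct and is the paper's argument: AM--GM gives $2\sqrt{pq}\le1$, and $|\cos(\pi k/a)|\le\cos(\pi/a)<1$ then yields $|\lambda_k|<1$ for every drift, the unbiased case included. Hence $1-\lambda_k t\ge 1-|\lambda_k|>0$ and $t>0$ on $\gamma\in(0,1)$.
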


\begin{proof}
By AM--GM with $p+q=1$ we have $2\sqrt{pq}\le1$, with equality
exactly at $p=q$; and $|\cos(\pi k/a)|\le\cos(\pi/a)<1$ for
$k=1,\dots,a-1$. Hence
\[
|\lambda_k|=2\sqrt{pq}\,|\cos(\pi k/a)|\le\cos(\pi/a)<1,
\]
the strict inequality holding for every drift, the unbiased case
included. Therefore $1-\lambda_kt\ge1-|\lambda_k|>0$ for
$t\in[0,1]$, while $t>0$ on $\gamma\in(0,1)$.
\end{proof}

\begin{remark}
Positivity is not part of Theorem~\ref{thm:fsr-spectral}: the
reduction holds for any diagonalisable generator, on the range of
$t$ where the resolvent is defined. For spectral radius
$\rho(P)\ge1$ one restricts to $t<1/\rho(P)$; the sign of $f_k$ then
plays no role in the reduction, only later in the orientation
theorem of Section~\ref{sec:sign-theorem}.
\end{remark}

\begin{remark}[Structural hypotheses]
\label{rem:hypotheses-spectral}
Theorem~\ref{thm:fsr-spectral} --- the spectral route of this
appendix, not the abstract Theorem~\ref{thm:fsr} of the body, which
rests on \textup{(S2)--(S4)} and on no generator at all --- uses, in
the present finite-dimensional setting, three structural
ingredients: \emph{(i)} a symmetrisable generator, \emph{(ii)} an
involutive symmetry commuting with it, and \emph{(iii)} a source of
definite parity under that involution. Together with
diagonalisability and the separation of the active eigenvalues ---
both automatic here, since $\widetilde P$ is symmetric tridiagonal
with simple spectrum --- these yield a finite reduction whose
dimension is the number of active odd spectral directions. The
specific spectrum \eqref{eq:eigenpairs} and the value
$r=\lfloor(a-1)/2\rfloor$ are features of this family.

Two scope remarks. The resolvent lemma
\textup{(}Lemma~\ref{lem:resolvent-invariance}\textup{)} is where the
independence from resetting really lies: it is a statement about any
diagonalisable operator and refers to no reset mechanism at all. The
theorem above is not independent in that sense, since the form
$\psi=\mathcal G(t)\xi$ and the source \eqref{eq:xi-def} come from
the geometric-resetting realisation. And the ingredients
\emph{(i)--(iii)} are structural, not sufficient by themselves:
without spectral separation the count of directions is no longer the
number of odd eigenvectors met by the source.
\end{remark}

\section*{Acknowledgements}

We are grateful to Tony Newton for a careful reading of the earlier
papers in this series and for sending us the analysis
\cite{Newton}, whose contribution to the present work is
substantive and specific. The global orientation principle
conjectured in Paper~III does not follow from the structural
axioms even when the response span is one-dimensional and the
vertex condition holds: a further calibration between two
orientations is required, and it can fail. That this is a genuine
third requirement, and not a consequence of the first two, is his
observation, and it is what
Remark~\ref{rem:rotation-reversal}\textup{(iii)} records. Without
it the discussion of Conjecture~\ref{conj:global} in
Section~\ref{sec:inherited} would have been incomplete in a way we
had not detected.

His analysis also contains, obtained independently and by the same
argument, the total geodesy of the separatrix under the
Fisher--Rao metric proved here as
Theorem~\ref{thm:great-subsphere}. That two routes arrived at the
same proof suggests it is the natural one.

The behaviour of the response field when $r\ge2$ --- the regime his
examples open, and the one this paper leaves as the combinatorial
geometry of the rotating case --- remains the natural next problem.

We also thank the colleagues who commented on earlier drafts of
this series; their questions shaped the presentation more than they
may realise.

\end{document}